\def\vert<#1>(#2,#3)[#4]{\node[circle, inner sep=0.5pt] (#1) at (#2, #3) {\small#4}}
\def\svert(#1,#2)[#3]{\vert<#3>(#1,#2)[#3]}
\newtheorem{thm}{Theorem}[section]
\newtheorem{cor}[thm]{Corollary}
\newtheorem{lem}[thm]{Lemma}
\newtheorem{prop}[thm]{Proposition}
\theoremstyle{definition}
\newtheorem{dfn}[thm]{Definition}
\theoremstyle{remark}
\newtheorem{rmk}[thm]{Remark}
\newtheorem{example}[thm]{Example}
\newcommand{\NN}{\mathbb{N}}
\newcommand{\TT}{\mathbb{T}}
\newcommand{\ZZ}{\mathbb{Z}}
\newcommand{\Bb}{\mathcal{B}}
\newcommand{\Gg}{\mathcal{G}}
\newcommand{\Hh}{\mathcal{H}}
\newcommand{\Kk}{\mathcal{K}}
\newcommand{\Mm}{\mathcal{M}}
\newcommand{\id}{\operatorname{id}}
\newcommand{\lsp}{\operatorname{span}}
\newcommand{\clsp}{\overline{\lsp}}
\newcommand{\Per}{\operatorname{Per}}
\newcommand{\PER}{\text{Per}(\Lambda)}
\newcommand{\Prim}{\operatorname{Prim}}
\newcommand{\MT}{\operatorname{MT}}
\title[Primitive ideals of $k$-graph $C^*$-algebras]{The primitive ideals of the
Cuntz-Krieger algebra of a row-finite higher-rank graph with no sources}
\author[Carlsen]{Toke Meier Carlsen}
\address{Toke Carlsen\\ Department of Mathematical Sciences\\ NTNU\\ NO-7491\\ Trondheim\\ Norway}
\email{Toke.Meier.Carlsen@math.ntnu.no}
\author[Kang]{Sooran Kang}
\address{Sooran Kang\\ Department of Mathematics and Statistics\\University of Otago\\PO Box 56\\Dunedin 9054\\New Zealand}
\email{sooran@maths.otago.ac.nz}
\author[Shotwell]{Jacob Shotwell}
\address{Jacob Shotwell\\ TBLR\\ Arizona State University\\ Tempe\\ AZ 85281\\ USA}
\email{shotwell@asu.edu}
\author[Sims]{Aidan Sims}
\address{Aidan Sims\\School of Mathematics and Applied Statistics\\University of Wollongong\\Wollongong 2522\\Australia}
\email{asims@uow.edu.au}
\date{May 28, 2013}
\subjclass[2010]{46L05 (primary); 46L45 (secondary)}
\keywords{Higher-rank graph; $k$-graph; primitive ideal; $C(X)$-algebra; irreducible representation}
\thanks{This research was supported by the Australian Research Council.}
\begin{document}

\begin{abstract}
We catalogue the primitive ideals of the Cuntz-Krieger algebra of a row-finite
higher-rank graph with no sources. Each maximal tail in the vertex set has an abelian
periodicity group of finite rank at most that of the graph; the primitive ideals in the
Cuntz-Krieger algebra are indexed by pairs consisting of a maximal tail and a character
of its periodicity group. The Cuntz-Krieger algebra is primitive if and only if the whole
vertex set is a maximal tail and the graph is aperiodic.
\end{abstract}

\maketitle

\section{Introduction}

Graph $C^*$-algebras were introduced in the early 1980's by Enomoto and Watatani
\cite{EnomotoWatatani:MJ80} as an alternative description of the Cuntz-Krieger algebras
invented in \cite{CuntzKrieger:IM80}. Enomoto and Watatani considered only finite graphs,
but since the late 1990's substantial work has gone into describing and understanding the
analogous construction for infinite directed graphs in various levels of generality (to
name just a few, \cite{BatesHongEtAl:IJM02, FowlerLacaEtAl:PAMS00, JeongParkEtAl:PJM01,
KumjianPaskEtAl:JFA97, RaeburnSzyma'nski:TAMS04, Tomforde:NYJM01}). A gem in this program
is Hong and Szyma\'nski's description \cite{HongSzyma'nski:JMSJ04} of the primitive ideal
space of the $C^*$-algebra of a directed graph. Hong and Szyma\'nski catalogue the
primitive ideals of $C^*(E)$ in terms of elementary structural features of $E$. They also
describe the closure operation in the hull-kernel topology in terms of this catalogue.

In 1999, Robertson and Steger discovered a class of higher-rank Cuntz-Krieger algebras
arising from $\ZZ^k$ actions on buildings \cite{RobertsonSteger:JRAM99}. Shortly
afterwards, Kumjian and Pask introduced higher-rank graphs and the associated
$C^*$-algebras \cite{KP1} as a simultaneous generalisation of the graph $C^*$-algebras of
\cite{KPR} and Robertson and Steger's higher-rank Cuntz-Krieger algebras. Kumjian and
Pask's $k$-graph algebras have since attracted a fair bit of attention. But their
structure is more subtle and less well understood than that of graph $C^*$-algebras. In
particular, while large portions of the gauge-invariant theory of graph $C^*$-algebras
can be generalised readily to $k$-graphs, higher-rank analogues of other structure
theorems for graph $C^*$-algebras are largely still elusive.

Our main result here is a complete catalogue of the primitive ideals in the $C^*$-algebra
of a row-finite $k$-graph with no sources. Our methods are very different from Hong and
Szyma\'nski's, and require a different set of technical tools. We have been unable to
describe the hull-kernel topology on $\Prim(C^*(\Lambda))$, and we leave this question
open for the present.

\smallskip

We begin in Section~\ref{sec:P-graphs} by introducing $P$-graphs $\Gamma$ and their
$C^*$-algebras $C^*(\Gamma)$, where $P$ is the image of $\NN^k$ under a homomorphism $f$
of $\ZZ^k$. We prove a gauge-invariant uniqueness theorem (Proposition~\ref{prp:Pgraph
giut}) and a Cuntz-Krieger uniqueness theorem (Corollary~\ref{cor:Pgraph CKUT}) for these
$C^*$-algebras. In Section~\ref{sec:pullback algs}, we consider the primitive ideal space
of the pullback $k$-graph arising from a $P$-graph. We first show that if $f$ and $P$ are
as above, then the pullback $f^*\Gamma$ of a $P$-graph $\Gamma$ over $f : \NN^k \to P$
and $d : \Gamma \to P$ is a $k$-graph. We then prove in Theorem~\ref{thm:bijection} that
the irreducible representations of $C^*(f^*\Gamma)$ are in bijection with pairs $(\pi,
\chi)$ where $\pi$ is an irreducible representation of $C^*(\Gamma)$, and $\chi$ is a
character of $\ker f$.

In Section~\ref{sec:max tails}, we study maximal tails in $k$-graphs. Building on an idea
from \cite{DavidsonYang:CJM09}, we show that each maximal tail $T$ has a well-defined
\emph{periodicity group} $\Per(T)$, and contains a large hereditary subset $H$ such that
the subgraph $H\Lambda T$ consisting of paths whose range and source both belong to $H$
is isomorphic to a pullback of an $(\NN^k/\Per(T))$-graph. The algebra $C^*(H\Lambda T)$
can be identified with a corner in $C^*(\Lambda T)$. Combining this with our earlier
results, we describe a bijection $(T, \chi) \mapsto I_{T, \chi}$ from pairs $(T, \chi)$
where $T$ is a maximal tail of $\Lambda$ and $\chi$ is a character $\Per(T)$ to primitive
ideals of $C^*(\Lambda)$. We conclude by showing that $C^*(\Lambda)$ is primitive if and
only if $\Lambda^0$ is a maximal tail and $\Lambda$ is aperiodic.

As usual in our subject, our convention is that in the context of $C^*$-algebras,
``homomorphism" always means ``$^*$-homomorphism," and ``ideal" always means ``closed
two-sided ideal".

\section{\texorpdfstring{$P$}{P}-graphs}\label{sec:P-graphs}

We introduce $P$-graphs over finitely generated cancellative abelian monoids $P$, and an
associated class of $C^*$-algebras. Our treatment is very brief since these objects are
introduced as a technical tool, and the ideas are essentially identical to those
developed in \cite{KP1}. Specialising to $P = \NN^k$ in this section also serves to
introduce the notation used later for $k$-graphs.

\begin{dfn}
Let $P$ be a finitely generated cancellative abelian monoid, which we also regard as a
category with one object. A \emph{$P$-graph} is a countable small category $\Gamma$
equipped with a functor $d : \Gamma \to P$ which has the factorisation property: whenever
$\xi \in \Gamma$ satisfies $d(\xi) = p + q$, there exist unique elements $\eta, \zeta \in
\Gamma$ such that $d(\eta) = p$, $d(\zeta) = q$ and $\xi = \eta\zeta$.
\end{dfn}

Observe that if $P = \NN^k$, then a $P$-graph is precisely a $k$-graph in the sense of
\cite{KP1}.

If $\Gamma$ is a $P$-graph and $p \in P$, then we write $\Gamma^p$ for $d^{-1}(p)$. When
$\xi$ is a morphism of $\Gamma$, we write $s(\xi)$ for the domain of $\xi$ and $r(\xi)$
for the codomain of $\xi.$ If $d(\xi) = 0$, then the factorisation property combined with
the identities $\xi = \id_{r(\xi)}\xi = \xi\id_{s(\xi)}$ imply that $\xi = \id_{r(\xi)} =
\id_{s(\xi)}$. Thus $\Gamma^0 = \{\id_o : o\text{ is an object of }\Gamma\}$, and we can
regard $r,s$ as maps from $\Gamma$ to $\Gamma^0$; we then have $r(\xi)\xi = \xi = \xi
s(\xi)$ for all $\xi \in \Gamma$.

Given $\xi, \eta \in \Gamma$ and a subset $E \subseteq \Gamma$, we define
\begin{gather*}
\xi E = \{\xi\zeta : \zeta \in E, r(\zeta) = s(\xi)\},\quad
    E\eta = \{\zeta\eta : \zeta \in E, s(\zeta) = r(\eta)\},\quad\text{and}\\
    \xi E \eta = \xi E \cap E\eta.
\end{gather*}
For $X, E, Y \subseteq \Gamma$, we write $XEY$ for $\bigcup_{\xi \in X, \eta\in Y} \xi
E\eta$. We say that the $P$-graph $\Gamma$ is \emph{row-finite with no sources} if $0 <
|v\Gamma^p| < \infty$ for all $v \in \Gamma^0$ and $p \in P$.

\begin{example}
Let $P$ be a finitely generated cancellative abelian monoid. Let
\[
    \Omega_P := \{(p,q) \in P \times P : \text{ there exists } r \in P\text{ such that } p+r = q\}.
\]
Since $P$ is cancellative, we may embed it in its Grothendieck group $G$ so that it makes
sense to write $p - q$ for $p,q \in P$. If $(p,q) \in \Omega_P$, then $q - p \in P$.
Define $d : \Omega_P \to P$ by $d(p,q) = q-p$. Define $r,s : \Omega_P \to P$ by $r(p,q) =
p$ and $s(p,q) = q$, and for $(p,q), (q,r) \in \Omega_P$, define $(p,q)(q,r) = (p,r)$;
this $(p,r)$ belongs to $\Omega_P$ because $r = p + ((q-p) + (r-q))$. Then $\Omega_P$ is
a category with objects $P$ and identity morphisms $(p,p)$, and $d$ is a functor from
$\Omega_P$ to $P$. Cancellativity of $P$ implies that $\Omega_P$ is a $P$-graph. We
identify $\Omega_P^0$ with $P$ via $(p,p) \mapsto p$. For $v \in \Omega_P^0$ and $p \in
P$, we then have $v\Omega_P^p = \{(v, v+p)\}$, so $\Omega_P$ is row-finite with no
sources.
\end{example}

A \emph{$P$-graph morphism} is a functor $x : \Gamma \to \Sigma$ between $P$-graphs
$\Gamma$ and $\Sigma$ such that $d_\Sigma(x(\xi)) = d_\Gamma(\xi)$ for all
$\xi \in \Gamma$. For a $P$-graph $\Gamma$, we define
\[
\Gamma^\Omega := \{x : \Omega_P \to \Gamma \mid x\text{ is a $P$-graph morphism}\}.
\]

\begin{lem}\label{lem:construct path}
Let $P$ be a finitely generated cancellative abelian monoid and let $\Gamma$ be a
row-finite $P$-graph with no sources. For each $v \in \Gamma^0$ there exists $x \in
\Gamma^\Omega$ such that $x(0) = v$.
\end{lem}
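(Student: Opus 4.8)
The plan is to build the $P$-graph morphism $x : \Omega_P \to \Gamma$ inductively, choosing values $x(0,p)$ for larger and larger $p \in P$ and invoking the factorisation property of $d$ to keep the choices consistent. Fix a finite generating set $\{g_1, \dots, g_n\}$ for $P$ and enumerate the elements of $P$ as $p_0 = 0, p_1, p_2, \dots$ in such a way that every initial segment $\{p_0, \dots, p_m\}$ is downward closed under the relation $\leq$ coming from $\Omega_P$ (for instance, order $P$ by the word length of a shortest expression in the generators, breaking ties arbitrarily; this is possible because each element has only finitely many predecessors). We will define morphisms $x(0, p)$ along this enumeration so that at stage $m$ we have consistently defined $x(0, p_i)$ for all $i \leq m$.

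First I would set $x(0,0) = v \in \Gamma^0$. For the inductive step, suppose $x(0, p_i)$ has been chosen for all $i < m$, compatibly, and we wish to define $x(0, p_m)$. Pick any $p_j$ with $j < m$ and $p_j \leq p_m$, say $p_m = p_j + q$ with $q \in P$; such a $p_j$ exists because the enumeration is downward closed and $p_m \neq 0$. Since $\Gamma$ has no sources, $s(x(0,p_j))\Gamma^q$ is nonempty, so we may choose $\zeta \in \Gamma^q$ with $r(\zeta) = s(x(0,p_j))$, and set $x(0, p_m) := x(0,p_j)\zeta$; then $d(x(0,p_m)) = p_j + q = p_m$. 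Having chosen all the $x(0,p)$, define $x$ on a general morphism $(p, p') \in \Omega_P$ by factoring: write $p' = p + (p'-p)$ and use the factorisation property of $d$ in $\Gamma$ to factor $x(0,p') = \eta\zeta$ with $d(\eta) = p$ and $d(\zeta) = p' - p$, and put $x(p, p') := \zeta$. One checks using uniqueness in the factorisation property that $x$ is a well-defined functor with $d_\Gamma \circ x = d_{\Omega_P}$, i.e.\ a $P$-graph morphism, and by construction $x(0) = x(0,0) = v$.

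The main obstacle is verifying that the $x(p,p')$ defined by factoring are mutually consistent — that is, that $x$ respects composition, $x((p,p')(p',p'')) = x(p,p')x(p',p'')$, and that the definition does not depend on which predecessor $p_j$ was used at each stage of the construction. Both of these reduce to repeated applications of the uniqueness clause in the factorisation property: any two paths in $\Gamma$ with the same source, range and degree that are extracted as factors of a common longer path must coincide. I would organise this as a single lemma-style verification: for any $p \leq p' \leq p''$ in $P$ with $p'' \leq p_m$, the unique factorisation of $x(0,p'')$ of the form (degree $p$)(degree $p'-p$)(degree $p''-p'$) exhibits $x(p,p'')$ as $x(p,p')x(p',p'')$. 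Once this coherence is in place, functoriality and degree-preservation are immediate, and the lemma follows.
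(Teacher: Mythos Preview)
Your construction has a genuine gap at exactly the point you flag as the ``main obstacle,'' and unique factorisation alone is not enough to close it. The issue is that your paths $x(0,p)$ are built independently along a linear enumeration, and nothing forces them to be coherent with one another. Concretely: take $P = \NN^2$ and a single-vertex $2$-graph with blue loops $e_1,e_2$ and red loops $f_1,f_2$ satisfying $e_if_j = f_je_i$. If the enumeration begins $(0,0),(1,0),(0,1),(1,1)$ and you set $x(0,(1,0))=e_1$, $x(0,(0,1))=f_2$, then at stage $(1,1)$ you extend the chosen predecessor $(1,0)$ by, say, $f_1$, obtaining $x(0,(1,1))=e_1f_1=f_1e_1$. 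The degree-$(0,1)$ initial factor of this is $f_1$, not $x(0,(0,1))=f_2$. Hence $x((0,1),(1,1))$, defined as the tail of $x(0,(1,1))$, is $e_1$, while $x((0,0),(0,1))x((0,1),(1,1))=f_2e_1\neq f_1e_1=x((0,0),(1,1))$. Functoriality fails. The ``uniqueness clause'' you invoke only compares factors of a \emph{common} path; here $x(0,(0,1))$ was never a factor of $x(0,(1,1))$, so there is nothing to apply it to.

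The paper sidesteps this by never trying to build the $x(0,p)$ independently. Instead it fixes generators $a_1,\dots,a_k$, sets $\mathbf{1}=\sum a_i$, and builds a single chain $\lambda_1\lambda_2\cdots$ with $d(\lambda_i)=\mathbf{1}$; the multiples $n\mathbf{1}$ form a totally ordered cofinal sequence in $P$, so any $(p,q)\in\Omega_P$ sits inside $\lambda_1\cdots\lambda_n$ for large $n$, and $x(p,q)$ is extracted as the middle factor of a three-fold factorisation of that one path. All values of $x$ are then segments of a single infinite composite, and coherence is immediate from unique factorisation. Your approach can be repaired by adopting this idea: build $x(0,p)$ only along a cofinal chain in $P$, then \emph{define} $x(0,p')$ for general $p'$ as the degree-$p'$ head of $x(0,p)$ for any $p\ge p'$ in the chain.
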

\begin{proof}
Fix generators $a_1, \dots, a_k$ for $P$. Let $\mathbf{1} = \sum^k_{i=1} a_i$. Let
$v_0:=v$. Recursively for $n \ge 1$ choose $\lambda_n \in v_{n-1}\Lambda^{\mathbf{1}}$
and set $v_n := s(\lambda_n)$. Suppose that $(p,q) \in \Omega_P$. Express $q =
\sum^k_{i=1} q_i a_i$ where each $q_i \in \NN$. Let $n = \max_{i\le k} q_i$. Then
$d(\lambda_1 \dots \lambda_n) = p + (q-p) + \sum^k_{i=1} (n - q_i)a_i$. Two applications
of the factorisation property give $\lambda_1 \dots \lambda_n = \xi \eta \zeta$ where
$d(\xi) = p$ and $d(\eta) = q - p$. Define $x(p,q) := \eta$. The factorisation property
guarantees that there is a well-defined function $x : \Omega_P \to \Gamma$ defined by
this formula, and that $x$ is a $P$-graph morphism. We have $x(0) = v$ by construction.
\end{proof}

We associate to each row-finite $P$-graph $\Gamma$ with no sources a $C^*$-algebra
$C^*(\Gamma)$. As we shall see in section~\ref{sec:pullback algs}, these $C^*$-algebras
are isomorphic to quotients by primitive ideals of crucial building blocks of $k$-graph
$C^*$-algebras.

\begin{dfn}\label{dfn:CKrels}
Let $P$ be a finitely generated cancellative abelian monoid and let $\Gamma$ be a
row-finite $P$-graph with no sources. A \emph{Cuntz-Krieger $\Gamma$-family} in a
$C^*$-algebra $B$ is a collection $\{t_\xi : \xi \in \Gamma\} \subseteq B$ such that
\begin{enumerate}\renewcommand{\theenumi}{CK\arabic{enumi}}
\item $\{t_v : v \in \Gamma^0\}$ is a set of mutually orthogonal projections;
\item $t_\xi t_\eta = t_{\xi\eta}$ whenever $s(\xi) = r(\eta)$;
\item $t_\xi^* t_\xi = t_{s(\xi)}$ for all $\xi \in \Gamma$; and
\item $t_v = \sum_{\xi \in v\Lambda^p} t_\xi t^*_\xi$ for all $v \in \Lambda^0$ and
    $p \in P$.
\end{enumerate}
\end{dfn}

Fix a Cuntz-Krieger $\Gamma$-family $\{t_\xi : \xi \in \Gamma\}$. For $\xi, \eta \in
\Gamma$,
\[
t^*_\xi t_\eta
    = t^*_\xi \sum_{\zeta \in r(\eta)\Gamma^{d(\xi)+d(\eta)}} t_\zeta t^*_\zeta t_\eta.
\]
Fix $\zeta \in r(\eta)\Gamma^{d(\xi)+d(\eta)}$. By the factorisation property, there are
unique factorisations $\zeta = \theta\theta' = \omega\omega'$ with $d(\theta) = d(\xi)$
and $d(\omega) = d(\eta)$. Relations (CK1)~and~(CK3) show that $t^*_\xi t_\zeta = 0$
unless $\theta = \xi$, and $t^*_\omega t_\eta = 0$ unless $\omega = \eta$, and we deduce
that
\[
t^*_\xi t_\eta
    = \sum_{\xi\xi' = \eta\eta' \in r(\eta)\Gamma^{d(\xi)+d(\eta)}} t_{\xi'} t_{\eta'}^*.
\]
Hence $C^*(\{t_\xi : \xi \in \Gamma\}) = \clsp\{t_\xi t^*_\eta : \xi,\eta \in \Gamma\}$.

A standard argument now shows that there is a universal $C^*$-algebra $C^*(\Gamma)$
generated by a Cuntz-Krieger family $\{s_\xi : \xi \in \Gamma\}$. If $P = \NN^k$ then the
relations of Definition~\ref{dfn:CKrels} are those of \cite{KP1}, and so the universal
$C^*$-algebra $C^*(\Gamma)$ coincides with the $C^*$-algebra of $\Gamma$ regarded as a
$k$-graph. To see that the generators of $C^*(\Gamma)$ are all nonzero, we introduce the
groupoid $\Gg_\Gamma$.

For each $\xi \in \Gamma$, define $Z(\xi) \subseteq \Gamma^\Omega$ by
\[
Z(\xi) := \{x \in \Gamma^\Omega : x(0, d(\xi)) = \xi\}.
\]

The $Z(\xi)$ constitute a base of compact open sets for a second-countable locally
compact Hausdorff topology on $\Gamma^\Omega$. For $x \in \Gamma^\Omega$ and $p \in P$,
there is a unique element $\sigma^p(x)$ of $\Gamma^\Omega$ defined by $\sigma^p(x)(q,r) =
x(p+q, p+r)$. Define $\Gg_\Gamma := \{(x, p-q, y) : x,y \in \Omega^P, \sigma^p(x) =
\sigma^q(y)\}$. This is a groupoid with $r(x,g,y) = (x,0,x)$, $s(x, g, y) = (y,0,y)$,
$(x,g,y)(y,h,z) = (x,g+h,z)$ and $(x,g,y)^{-1} = (y, -g, x)$. Arguments like those of
\cite{KP1} show that $\Gg_\Gamma$ is a locally compact Hausdorff \'etale groupoid under
the topology generated by the sets
\[
Z(\xi,\eta) = \{(x, d(\xi) - d(\eta), y) : x \in Z(\xi),
    y \in Z(\eta), \sigma^{d(\xi)}(x) = \sigma^{d(\eta)}(y)\}.
\]
Each $Z(\xi,\eta)$ is compact open in $\Gg_\Gamma$.

We recall from \cite{Renault:groupoidapproachto80} the construction of
$C^*_r(\Gg_\Gamma)$. The space $C_c(\Gg_\Gamma)$ is a $^*$-algebra under the convolution
product $(a * b)(x, g, y) = \sum_{(x,h,z) \in \Gg_\Gamma} a(x, h, z) b(z, h^{-1}g, y)$
and the involution $a^*(x, g, y) = \overline{a(y, g^{-1}, x)}$. For $y \in
\Gamma^\Omega$, let $\Hh_y := \ell^2(\{\alpha \in \Gg_\Gamma : s(\alpha) = y\})$ with
orthonormal basis $\{\delta_\alpha: \alpha\in \Gg_\Gamma, s(\alpha)=y\}$. The regular
representation $\rho_y$ of $C_c(\Gg_\Gamma)$ on $\Hh_y$ is given by $\rho_y(a)\delta_{(x,
g, y)} = \sum_{(z, h, x) \in \Gg_\Gamma} a(z,h,x) \delta_{(z, h+g, y)}$. The reduced
groupoid $C^*$-algebra $C^*_r(\Gg_\Gamma)$ is the closure of the image of $C_c(\Gamma)$
under $\bigoplus_y \rho_y$; equivalently, it is the completion of $C_c(\Gg_\Gamma)$ in
the norm $\|a\| = \sup_y \|\rho_y(a)\|$.

\begin{lem}\label{lem:nonzero}
Let $P$ be a finitely generated cancellative abelian monoid. Let $\Gamma$ be a
row-finite $P$-graph with no sources. For $\xi \in
\Gamma$, let $t_\xi := 1_{Z(\xi, s(\xi))} \in C_c(\Gg_\Gamma) \subseteq
C^*_r(\Gg_\Gamma)$. Then $\{t_\xi : \xi \in \Gamma\}$ is a Cuntz-Krieger $\Gamma$-family,
and there is a homomorphism $\pi_t : C^*(\Gamma) \to C^*_r(\Gg_\Gamma)$ which carries
each $s_\xi$ to $1_{Z(\xi, s(\xi))}$. In particular, each generator of $C^*(\Gamma)$ is
nonzero.
\end{lem}
\begin{proof}
Routine calculations show that the $t_\xi$ form a Cuntz-Krieger $\Gamma$-family. The
existence of the homomorphism $\pi_t$ then follows from the universal property of
$C^*(\Gamma)$. Since the $Z(\xi, s(\xi))$ are all nonempty, the $t_\xi$ are all nonzero
and hence the $s_\xi$ are also nonzero.
\end{proof}

If $G$ is the Grothendieck group of $P$, then for each character $\chi$ of $G$, the
elements $\gamma_\chi(s_\xi) := \chi(d(\xi))s_\xi \in C^*(\Gamma)$ form a Cuntz-Krieger
$\Gamma$ family, and hence induce an endomorphism $\gamma_\chi$ of $C^*(\Gamma)$. Since
$\gamma_{\chi^{-1}}$ is an inverse for $\gamma_\chi$, both are automorphisms. An
$\varepsilon/3$-argument shows that $\gamma$ is a continuous action of $\widehat{G}$ by
automorphisms. Averaging over $\gamma$ gives a faithful conditional expectation $\Phi$
onto the fixed-point algebra $C^*(\Gamma)^\gamma$
\cite[Proposition~3.2]{Raeburn:Graphalgebras05}. Since every spanning element of
$C^*(\Gamma)$ belongs to one of the spectral subspaces of $\gamma$, we have
\begin{equation}\label{eq:cond}
\Phi(s_\xi s^*_\eta) = \delta_{d(\xi), d(\eta)} s_\xi s^*_\eta.
\end{equation}
In particular, we have $C^*(\Gamma)^\gamma = \clsp\{s_\xi s^*_\eta : d(\xi) = d(\eta)\}$.

For a countable set $X$, we write $\mathcal{K}_X$ for the unique nonzero $C^*$-algebra
generated by elements $\{\theta_{x,y}:x,y\in X\}$ such that $\theta_{x,y}^*=\theta_{y,x}$
and $\theta_{x,y}\theta_{w,z}=\delta_{y,w}\theta_{x,z}$.

\begin{lem}\label{lem:direct limit}
Let $P$ be a finitely generated cancellative abelian monoid and let $\Gamma$ be a
row-finite $P$-graph with no sources. Then $C^*(\Gamma)^\gamma$ is an AF algebra. A
homomorphism $\phi : C^*(\Gamma) \to B$ restricts to an injection of $C^*(\Gamma)^\gamma$
if and only if $\phi(s_v) \not= 0$ for all $v$.
\end{lem}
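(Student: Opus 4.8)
The plan is to realise $C^*(\Gamma)^\gamma$ as an increasing union of finite-dimensional $C^*$-algebras indexed by the generators $a_1,\dots,a_k$ of $P$. Set $\mathbf{1} = \sum_{i=1}^k a_i$ as in Lemma~\ref{lem:construct path}, and for $n \ge 0$ let $F_n := \clsp\{s_\xi s^*_\eta : d(\xi) = d(\eta) = n\mathbf{1}\}$. First I would check that each $F_n$ is a $C^*$-subalgebra: using the computation of $t^*_\xi t_\eta$ from the discussion preceding Lemma~\ref{lem:nonzero}, when $d(\xi) = d(\eta)$ the sum collapses to $\delta_{\xi,\eta} s_{s(\xi)}$, so that $(s_\xi s^*_\eta)(s_\zeta s^*_\omega) = \delta_{\eta,\zeta} s_\xi s^*_\omega$; hence $F_n \cong \bigoplus_{v \in \Gamma^0} \mathcal{K}_{\Gamma^{n\mathbf 1}v}$, a (possibly infinite, but c.c.) direct sum of elementary $C^*$-algebras, so in particular AF. Next I would show $F_n \subseteq F_{n+1}$: apply (CK4) at $p = \mathbf 1$ to each source, writing $s_\xi s^*_\eta = \sum_{\mu \in s(\xi)\Gamma^{\mathbf 1}} s_{\xi\mu} s^*_{\eta\mu}$, which lies in $F_{n+1}$. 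Finally, since every spanning element $s_\xi s^*_\eta$ of $C^*(\Gamma)^\gamma$ has $d(\xi) = d(\eta) =: p$, and $p \le n\mathbf 1$ for $n$ large enough, two applications of the factorisation property plus (CK4) rewrite $s_\xi s^*_\eta$ as an element of $F_n$; thus $C^*(\Gamma)^\gamma = \overline{\bigcup_n F_n}$ is AF.

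For the second statement, suppose $\phi : C^*(\Gamma) \to B$ satisfies $\phi(s_v) \ne 0$ for all $v \in \Gamma^0$. It suffices to show $\phi|_{F_n}$ is injective for every $n$, since an injective limit of a directed system of injections is injective (an element of the union killed by $\phi$ lies in some $F_n$). Now $F_n \cong \bigoplus_{v} \mathcal{K}_{\Gamma^{n\mathbf 1}v}$, and a homomorphism out of a direct sum of (simple) elementary $C^*$-algebras is injective iff it is nonzero on each summand; the summand indexed by $v$ is nonzero under $\phi$ precisely because it contains the projection $s_v = s_{\id_v}s^*_{\id_v}$ (taking $\xi = \eta = \id_v$, which has $d(\id_v) = 0 \le n\mathbf 1$), and $\phi(s_v) \ne 0$ by hypothesis. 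Hence $\phi|_{F_n}$ is isometric for all $n$, so $\phi|_{C^*(\Gamma)^\gamma}$ is isometric. The converse is immediate: if $\phi|_{C^*(\Gamma)^\gamma}$ is injective then in particular $\phi(s_v) \ne 0$ since $s_v \ne 0$ in $C^*(\Gamma)$ by Lemma~\ref{lem:nonzero}.

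The only genuinely delicate point is the rewriting step: that an arbitrary spanning element $s_\xi s^*_\eta$ with $d(\xi) = d(\eta) = p$ can be pushed into $F_n$ whenever $p \le n\mathbf 1$. Here I would argue as in Lemma~\ref{lem:construct path}: since $P$ is generated by $a_1,\dots,a_k$ and these all appear in $\mathbf 1$, for $n$ large each coordinate inequality holds, so $n\mathbf 1 - p \in P$; then (CK4) applied at $q := n\mathbf 1 - p$ gives $s_{s(\xi)} = \sum_{\mu \in s(\xi)\Gamma^q} s_\mu s^*_\mu$, and inserting this between $s_\xi$ and $s^*_\eta$ and using (CK2) yields $s_\xi s^*_\eta = \sum_\mu s_{\xi\mu} s^*_{\eta\mu} \in F_n$. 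This is routine once one is careful that $s(\xi) = s(\eta)$ (forced by $t^*_\xi t_\eta \ne 0$, which one may assume) so that the same $\mu$ attaches to both. No further obstacle arises.
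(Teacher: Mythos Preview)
Your plan follows the paper's proof almost exactly: both build the nested family $F_n = \clsp\{s_\xi s^*_\eta : d(\xi)=d(\eta)=n\mathbf{1}\}$, identify each $F_n$ with $\bigoplus_{v}\Kk_{\Gamma^{n\mathbf{1}}v}$ via the matrix-unit relations, and push an arbitrary $s_\xi s^*_\eta$ with $d(\xi)=d(\eta)=p$ into $F_n$ by writing $p=\sum p_ia_i$, taking $n=\max_i p_i$, and applying (CK4) at $q=n\mathbf{1}-p$. (Your phrase ``each coordinate inequality holds'' is loose for an abstract $P$, but this is exactly the intended computation.)

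There is, however, a genuine slip in the injectivity half. You claim that the summand of $F_n$ indexed by $v$ ``contains the projection $s_v=s_{\id_v}s^*_{\id_v}$ (taking $\xi=\eta=\id_v$, which has $d(\id_v)=0\le n\mathbf{1}$)''. But membership in $F_n$ requires $d(\xi)=d(\eta)=n\mathbf{1}$ exactly, not $\le n\mathbf{1}$; and after rewriting $s_v=\sum_{\xi\in v\Gamma^{n\mathbf{1}}}s_\xi s^*_\xi$ via (CK4), each term lands in the summand indexed by $s(\xi)$, not by $v=r(\xi)$. So $s_v$ need not touch the summand for $v$ at all.

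The repair is the one the paper uses: for any $\xi\in\Gamma^{n\mathbf{1}}v$ (if none exists the summand is zero and there is nothing to check) one has $\phi(s_\xi)^*\phi(s_\xi)=\phi(s_{s(\xi)})=\phi(s_v)\ne 0$, hence $\phi(s_\xi)\ne 0$, hence $\phi(s_\xi s^*_\xi)\ne 0$. This shows $\phi$ is nonzero on each nonzero simple summand of $F_n$, and the rest of your argument goes through unchanged.
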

\begin{proof}
Let $a_1, \dots, a_k$ be generators for $P$. For each $n \in \NN$, let $n\cdot\mathbf{1}
:= \sum^k_{i=1} n\cdot a_i$, and let $A_n := \clsp\{s_\xi s^*_\eta : d(\xi) = d(\eta) =
n\cdot\mathbf{1}\}$. The Cuntz-Krieger relations ensure that the $s_\xi s^*_\eta$ are
matrix units. By Lemma~\ref{lem:nonzero} all the $s_\xi$ are nonzero, and hence $s(\xi) =
s(\eta)$ implies $\|s_\xi s^*_\eta\|^2 = \|s_\eta s^*_\xi s_\xi s^*_\eta\| = \|s_\eta
s^*_\eta\| = \|s_\eta\|^2 \not= 0$. It follows that $s_\xi s^*_\eta \mapsto \Theta_{\xi,
\eta}$ determines an isomorphism $A_n \cong \bigoplus_{v \in \Gamma^0}
\Kk_{\Gamma^{n\cdot\mathbf{1}} v}$. If $m \le n$, then for $v \in \Gamma^0$ and $\xi,\eta
\in \Gamma^{m\cdot\mathbf{1}}v$, we have
\[
s_\xi s^*_\eta = \sum_{\zeta \in v\Gamma^{(n-m)\cdot\mathbf{1}}} s_{\xi\zeta} s^*_{\eta\zeta} \in A_n,
\]
and hence $m \le n$ implies $A_m \subseteq A_n$. Hence $\overline{\bigcup_n A_n}$ is an
AF subalgebra of $C^*(\Gamma)^\gamma$. Suppose that $\xi,\eta \in \Gamma$ satisfy
$s(\xi) = s(\eta) = v$ and $d(\xi) = d(\eta)$. Write $d(\xi) = \sum^k_{i=1} p_i a_i$. Let
$n := \max_i p_i$ and let $q := \sum^k_{i=1} (n - p_i)a_i$. Then $n\cdot\mathbf{1} = d(\xi) + q$ and
 $s_\xi s^*_\eta = \sum_{\zeta \in v\Gamma^q} s_{\xi\zeta} s^*_{\eta\zeta} \in A_n$.
Hence $C^*(\Gamma)^\gamma = \overline{\bigcup_n A_n}$ is AF.

Now suppose that $\phi : C^*(\Gamma) \to B$ is a homomorphism. Since each $s_v$ is
nonzero and belongs to $C^*(\Gamma)^\gamma$, if $\phi$ is injective on
$C^*(\Gamma)^\gamma$, then each $\phi(s_v) \not= 0$. Conversely suppose that each
$\phi(s_v) \not= 0$. Fix $n \in \NN$. For $\xi, \eta \in \Gamma^{n\cdot\mathbf{1}}$, the
reasoning of the first paragraph of this proof shows that $\phi(s_\xi s^*_\eta) \not= 0$.
Since each $\Kk_{\Gamma^{n\cdot\mathbf{1}} v}$ is simple, it follows that $\phi$ is
injective and hence isometric on $A_n$. Since the $A_n$ are nested, it follows that
$\phi$ is isometric on $\bigcup_n A_n$, and therefore on $\overline{\bigcup_n A_n} =
C^*(\Gamma)^\gamma$ as well.
\end{proof}

\begin{prop}\label{prp:Pgraph giut}
Let $P$ be a finitely generated cancellative abelian monoid, and let $G$ be its
Grothendieck group. Let $\Gamma$ be a row-finite $P$-graph with no sources. Suppose that
$\{t_\xi : \xi \in \Gamma\}$ is a Cuntz-Krieger $\Gamma$-family in a $C^*$-algebra $B$
and there is an action $\beta$ of $\widehat{G}$ on $B$ such that $\beta_\chi(t_\xi) =
\chi(d(\xi))t_\xi$ for all $\xi \in \Gamma$. Then the induced homomorphism $\pi_t :
C^*(\Gamma) \to B$ is injective if and only if $t_v \not= 0$ for all $v \in \Gamma^0$.
The homomorphism $\pi_t$ of Lemma~\ref{lem:nonzero} is an isomorphism from $C^*(\Gamma)$
to $C^*_r(\Gg_\Gamma)$.
\end{prop}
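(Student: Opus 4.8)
The plan is to prove the gauge-invariant uniqueness statement first, using the standard argument adapted to $P$-graphs, and then deduce that $\pi_t$ from Lemma~\ref{lem:nonzero} is an isomorphism as a special case. First I would observe that if $\pi_t$ is injective then certainly $t_v = \pi_t(s_v) \neq 0$ for all $v$, since each $s_v$ is nonzero by Lemma~\ref{lem:nonzero}. For the converse, suppose $t_v \neq 0$ for all $v \in \Gamma^0$. By Lemma~\ref{lem:direct limit}, the hypothesis that $\pi_t(s_v) = t_v \neq 0$ for all $v$ already forces $\pi_t$ to be injective on the fixed-point algebra $C^*(\Gamma)^\gamma$. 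The remaining task is the customary averaging argument to promote this to injectivity on all of $C^*(\Gamma)$.

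The key step is as follows. Let $\Phi$ be the faithful conditional expectation onto $C^*(\Gamma)^\gamma$ obtained by averaging over $\gamma$, and let $\Phi^B$ be the analogous averaging over $\beta$ on the image $C^*(\{t_\xi\})$; the intertwining relation $\beta_\chi \circ \pi_t = \pi_t \circ \gamma_\chi$ (which holds on generators, hence everywhere) gives $\Phi^B \circ \pi_t = \pi_t \circ \Phi$. Now take $a \in C^*(\Gamma)$ with $\pi_t(a) = 0$. Then $\pi_t(a^*a) = 0$, so $\pi_t(\Phi(a^*a)) = \Phi^B(\pi_t(a^*a)) = 0$; since $\pi_t$ is injective on $C^*(\Gamma)^\gamma$ and $\Phi(a^*a) \in C^*(\Gamma)^\gamma$, we get $\Phi(a^*a) = 0$, and faithfulness of $\Phi$ gives $a^*a = 0$, hence $a = 0$. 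This proves $\pi_t$ is injective.

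Finally, for the last sentence of the proposition, I apply this to the Cuntz-Krieger family $\{t_\xi = 1_{Z(\xi,s(\xi))}\}$ of Lemma~\ref{lem:nonzero}. One checks that $C^*_r(\Gg_\Gamma)$ carries an action $\beta$ of $\widehat{G}$ satisfying $\beta_\chi(1_{Z(\xi,s(\xi))}) = \chi(d(\xi)) 1_{Z(\xi,s(\xi))}$: since $\Gg_\Gamma$ comes with the cocycle $c(x,g,y) = g$ into $G$, the formula $(\beta_\chi a)(x,g,y) = \chi(g) a(x,g,y)$ defines an action on $C_c(\Gg_\Gamma)$ that is isometric for the reduced norm (it is implemented by the diagonal unitaries $U_\chi \delta_{(x,g,y)} = \chi(g)\delta_{(x,g,y)}$ on each $\Hh_y$) and hence extends to $C^*_r(\Gg_\Gamma)$. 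Since each $Z(\xi,s(\xi))$ is nonempty, $t_v = 1_{Z(v,v)} \neq 0$ for all $v$, so the first part of the proposition shows $\pi_t$ is injective. Surjectivity is immediate because the $1_{Z(\xi,s(\xi))}$ generate $C^*_r(\Gg_\Gamma)$: indicator functions of the basic compact open bisections $Z(\xi,\eta)$ span a dense subspace of $C_c(\Gg_\Gamma)$, and $1_{Z(\xi,\eta)} = t_\xi t_\eta^*$. The only point requiring any care is verifying that $\beta$ is well-defined and isometric on the reduced completion, but the unitary implementation makes this routine; the rest is the textbook gauge-invariant uniqueness machinery.
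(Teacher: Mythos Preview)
Your proof is correct and follows essentially the same route as the paper's: both use Lemma~\ref{lem:direct limit} for injectivity on the fixed-point algebra, the standard averaging/conditional-expectation argument to promote this to all of $C^*(\Gamma)$, and then construct the gauge action on $C^*_r(\Gg_\Gamma)$ via the cocycle $c(x,g,y)=g$ to apply the first part. The only cosmetic differences are that the paper phrases the action on $C^*_r(\Gg_\Gamma)$ in terms of rank-one operators $\Theta_{\alpha,\beta}$ on each $\Hh_y$ rather than your diagonal unitaries (the two are equivalent), and invokes Stone--Weierstrass explicitly for surjectivity where you assert density of the span of the $1_{Z(\xi,\eta)}$.
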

\begin{proof}
First observe that if some $t_v = 0$ then Lemma~\ref{lem:nonzero} implies that $\pi_t$ is
not injective. Now suppose that each $t_v \not= 0$. By Lemma~\ref{lem:direct limit}, the
homomorphism $\pi_t$ is injective on $C^*(\Gamma)^\gamma$. Averaging over $\beta$ gives a
conditional expectation $\Psi : \pi_t(C^*(\Gamma)) \to \pi_t(C^*(\Gamma)^\gamma)$ such
that $\Psi \circ \pi_t = \pi_t \circ \Phi$, where $\Phi$ is the conditional expectation
of \eqref{eq:cond}. Now the following standard argument shows that $\pi_t$ is faithful:
\[
\pi_t(a) = 0
    \implies \Psi(\pi_t(a^*a)) = 0
    \implies \pi_t(\Phi(a^*a)) = 0
    \implies \Phi(a^*a) = 0
    \implies a = 0.
\]

Now let $\{t_\xi : \xi \in \Gamma\}$ and $\pi_t : C^*(\Gamma) \to C^*_r(\Gg_\Gamma)$ be
as in Lemma~\ref{lem:nonzero}. Define $c : \Gg_\Gamma \to G$ by $c(x,g,y) = g$. For $y
\in \Gamma^\Omega$, let $\rho_y$ be the regular representation discussed
prior to Lemma~\ref{lem:nonzero}. For $\alpha,\beta\in \Gg_\Gamma$ with
$s(\alpha)=s(\beta)=y$, let $\Theta_{\alpha,\beta} \in \Bb(\Hh_y)$ be the rank-one
operator from $\mathbb{C}\delta_\beta$
to $\mathbb{C}\delta_\alpha$. There is a strongly continuous action $\beta^y$ of
$\widehat{G}$ on $\Bb(\Hh_y)$ such that $\beta^y_\chi(\Theta_{\alpha,\beta}) =
\chi(c(\alpha) - c(\beta)) \Theta_{\alpha, \beta}$. In particular,
$\beta^y_\chi(\rho_y(t_\xi)) = \chi(d(\xi))\rho_y(t_\xi)$ for all $\xi \in
\Gamma$ and $\chi\in\widehat{G}$. Thus $\beta = \bigoplus_y \beta^y$ is a strongly
continuous action of $\widehat{G}$ on $C^*_r(\Gg_\Gamma)$ such that $\beta_\chi(t_\xi) =
\chi(d(\xi))t_\xi$ for all $\xi \in \Gamma$. So $\pi_t$ is injective.
Lemma~\ref{lem:nonzero} implies that if $s(\xi) = s(\eta)$ in $\Gamma$, then
$\pi_t(s_\xi s^*_\eta) = 1_{Z(\xi, s(\xi))} 1_{s(\eta), \eta} = 1_{Z(\xi, \eta)}$. The
$C^*$-norm on $C^*_r(\Gg_\Gamma)$ coincides with the supremum norm on each
$C_c(Z(\xi, \eta))$, so the Stone-Weierstrass theorem implies that the range of $\pi_t$
contains $C_c(\Gg_\Gamma)$, and hence $\pi_t$ is surjective.
\end{proof}

Let $P$ be a finitely generated cancellative abelian monoid and let $\Gamma$ be a
$P$-graph. We say that $\Gamma$ is \emph{aperiodic} if for every $v \in \Gamma^0$ there
exists $x \in \Gamma^\Omega$ such that $p \not= q \in P$ implies $\sigma^p(x) \not=
\sigma^q(x)$.

\begin{cor}\label{cor:Pgraph CKUT}
Let $P$ be a finitely generated abelian monoid and let $\Gamma$ be a row-finite $P$-graph with no sources. Suppose
that $\Gamma$ is aperiodic. If $\{t_\xi : \xi \in \Gamma\}$ is a Cuntz-Krieger
$\Gamma$-family in a $C^*$-algebra $B$, then the induced homomorphism $\pi_t :
C^*(\Gamma) \to B$ is injective if and only if $t_v \not= 0$ for every $v \in \Gamma^0$.
\end{cor}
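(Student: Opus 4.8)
The plan is to deduce the Cuntz–Krieger uniqueness theorem from the gauge-invariant uniqueness theorem (Proposition~\ref{prp:Pgraph giut}) by a standard dilation argument: given an arbitrary Cuntz–Krieger $\Gamma$-family $\{t_\xi\}$ in $B$ with all $t_v \neq 0$, we must manufacture a gauge action on $C^*(\{t_\xi : \xi \in \Gamma\})$. One direction is immediate, since if $\pi_t$ is injective then $t_v = \pi_t(s_v) \neq 0$ by Lemma~\ref{lem:nonzero}. For the converse, I would not try to build the gauge action directly on $B$; instead I would pass to a larger algebra where one is available for free, namely $C^*_r(\Gg_\Gamma)$.

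First I would form the Cuntz–Krieger family $\{t_\xi \oplus u_\xi\}$ in $B \oplus C^*_r(\Gg_\Gamma)$, where $\{u_\xi\}$ is the canonical family $1_{Z(\xi,s(\xi))}$ of Lemma~\ref{lem:nonzero}; here the second summand carries the gauge action $\beta = \bigoplus_y \beta^y$ constructed in the proof of Proposition~\ref{prp:Pgraph giut}. But $\{t_\xi \oplus u_\xi\}$ need not itself be equivariant for an action on all of $B \oplus C^*_r(\Gg_\Gamma)$, so the key move is to restrict attention to the subalgebra $C^*(\{t_\xi \oplus u_\xi\}) \subseteq B \oplus C^*_r(\Gg_\Gamma)$, on which I would define an action $\beta'$ of $\widehat G$ by $\beta'_\chi(t_\xi \oplus u_\xi) = \chi(d(\xi))\,(t_\xi \oplus u_\xi)$. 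That such an action exists is precisely where aperiodicity does its work: aperiodicity gives, via the combinatorial/topological machinery behind Proposition~\ref{prp:Pgraph giut} (essentially that $\Gg_\Gamma$ is topologically principal when $\Gamma$ is aperiodic), that the second-coordinate projection $t_\xi \oplus u_\xi \mapsto u_\xi$ is faithful on $C^*(\{t_\xi \oplus u_\xi\})$—equivalently $C^*(\{t_\xi \oplus u_\xi\}) \cong C^*_r(\Gg_\Gamma)$—so the gauge action transports back. (Concretely: the second-coordinate homomorphism $\pi_u$ from $C^*(\Gamma)$ onto $C^*_r(\Gg_\Gamma)$ is injective by Proposition~\ref{prp:Pgraph giut}, so it factors the diagonal homomorphism $\xi \mapsto t_\xi \oplus u_\xi$, and the diagonal is injective iff $\pi_u$ is; thus $C^*(\{t_\xi \oplus u_\xi\})$ is canonically isomorphic to $C^*_r(\Gg_\Gamma)$ and inherits $\beta$.)

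Once $\beta'$ is in hand, Proposition~\ref{prp:Pgraph giut} applies to the family $\{t_\xi \oplus u_\xi\}$: since $(t_v \oplus u_v) = (t_v \oplus 1_{Z(v,v)}) \neq 0$ for every $v \in \Gamma^0$, the induced homomorphism $C^*(\Gamma) \to C^*(\{t_\xi \oplus u_\xi\})$, $s_\xi \mapsto t_\xi \oplus u_\xi$, is injective. Composing with the first-coordinate projection recovers $\pi_t$, and injectivity of an injective map followed by... — here one must be slightly careful, since projection onto the first coordinate is not injective in general. The cleaner formulation: $\pi_t$ injective $\iff$ $\ker\pi_t = 0$, and $\ker(s_\xi \mapsto t_\xi \oplus u_\xi) = \ker\pi_t \cap \ker\pi_u = \ker\pi_t \cap \{0\} = \ker \pi_t$ since $\pi_u$ is already injective. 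Hence the diagonal map is injective for free, and the content is entirely in producing the gauge action $\beta'$, i.e.\ in the identification $C^*(\{t_\xi\oplus u_\xi\}) \cong C^*_r(\Gg_\Gamma)$.

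The main obstacle is therefore establishing that the diagonal family generates a copy of $C^*_r(\Gg_\Gamma)$, i.e.\ that $\pi_u$ being injective forces the diagonal homomorphism to be injective and its image to be isomorphic (via the second-coordinate projection) to $C^*_r(\Gg_\Gamma)$ in a way compatible with $\beta$. This is really the observation that for any homomorphisms $\phi_1, \phi_2$ out of a common $C^*$-algebra with $\phi_2$ injective, $\phi_1 \oplus \phi_2$ is injective and $(\phi_1 \oplus \phi_2)(\,\cdot\,)$ is isomorphic to $\mathrm{ran}(\phi_2)$ via the second projection; the gauge action on $\mathrm{ran}(\phi_2) = C^*_r(\Gg_\Gamma)$ then pulls back along this isomorphism to the desired $\beta'$. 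With that in place the result follows directly from Proposition~\ref{prp:Pgraph giut} applied to $\{t_\xi \oplus u_\xi\}$, exactly as above. The role of aperiodicity is confined to a single invocation of Proposition~\ref{prp:Pgraph giut} for the canonical family $\{u_\xi\}$—no new groupoid analysis is needed.
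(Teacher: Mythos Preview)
There is a genuine gap: your argument never establishes that $\pi_t$ is injective, only that the diagonal map $s_\xi \mapsto t_\xi \oplus u_\xi$ is. Your chain of equalities should read $\ker\pi_t \cap \ker\pi_u = \ker\pi_t \cap \{0\} = \{0\}$, not $= \ker\pi_t$; this shows the diagonal is injective, but that is automatic once $\pi_u$ is injective and tells you nothing about $\ker\pi_t$. Transporting the gauge action along the second-coordinate isomorphism and applying Proposition~\ref{prp:Pgraph giut} to $\{t_\xi \oplus u_\xi\}$ then merely re-proves this same injectivity. To recover $\pi_t$ you would have to project onto the \emph{first} coordinate, and that projection is injective on the image of the diagonal precisely when $\pi_t$ is---so the argument is circular.

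More tellingly, aperiodicity is never actually used. You locate it in showing that the second-coordinate projection is faithful on $C^*(\{t_\xi \oplus u_\xi\})$, but that faithfulness is automatic whenever $\pi_u$ is injective (the restriction is $\pi_u$ composed with the inverse of the diagonal). Proposition~\ref{prp:Pgraph giut} itself assumes no aperiodicity. Since the corollary is false without aperiodicity (take $\Gamma$ a single loop, so $C^*(\Gamma) \cong C(\TT)$, and let $t$ be evaluation at a point: then $t_v \neq 0$ but $\pi_t$ is not injective), any argument that never invokes it cannot work. The paper's proof uses aperiodicity essentially: it shows that aperiodicity makes $\Gg_\Gamma$ topologically principal, invokes Exel's theorem that every nonzero ideal of $C^*_r(\Gg_\Gamma)$ then meets $C_0(\Gg_\Gamma^{(0)})$, and combines this with Lemma~\ref{lem:direct limit} (which gives $\pi_t$ injective on $C^*(\Gamma)^\gamma \supseteq C_0(\Gg_\Gamma^{(0)})$ once all $t_v \neq 0$) to force $\ker\pi_t = 0$.
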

\begin{proof}
The only if is clear because Lemma~\ref{lem:nonzero} implies that each $s_v \not= 0$.
Suppose that each $t_v \not= 0$. Then each $t_\lambda t^*_\lambda \not= 0$. By
Proposition~\ref{prp:Pgraph giut}, we can identify $C^*(\Gamma)$ with
$C^*_r(\Gg_\Gamma)$, and regard $\pi_t$ as a homomorphism of $C^*_r(\Gg_\Gamma)$ which
carries each $1_{Z(\xi, s(\xi))}$ to $t_\xi$. The isomorphism $C^*(\Gamma) \cong
C^*_r(\Gg_\Gamma)$ identifies $C_0(\Gg_\Gamma^{(0)})$ with a subalgebra of
$C^*(\Gamma)^\gamma$, so Lemma~\ref{lem:direct limit} implies that $\pi_t$ is injective
on $C_0(\Gg_\Gamma^{(0)})$.

Fix a basic open set $Z(\xi)$ in $\Gg_\Gamma^{(0)} = \Gamma^\Omega$. By hypothesis, there
exists $y \in \Gamma^\omega$ such that $y(0) = s(\xi)$ and $\sigma^p(y) \not=
\sigma^q(y)$ whenever $p \not= q$. The factorisation property implies that there is a
unique element $\xi y$ of $Z(\xi)$ such that $\sigma^{d(\xi)}(\xi y) = y$. We have
\[
\sigma^p(\xi y) = \sigma^q(\xi y)
    \implies \sigma^{d(\xi)}(\sigma^p(\xi y)) = \sigma^{d(\xi)}(\sigma^q(\xi y))
    \implies \sigma^p(y) = \sigma^q(y)
    \implies p = q.
\]
Thus $\Gg_\Lambda$ has trivial isotropy at $\xi y$. Thus $\Gg_\Gamma$ is topologically
principal. It now follows from \cite[Theorem~4.4]{Exel:PAMS10} that every nontrivial
ideal of $C^*_r(\Gg_\Gamma)$ has nontrivial intersection with $C_0(\Gg_\Gamma^{(0)})$. In
particular, that $\pi_t|_{C_0(\Gg_\Gamma^{(0)})}$ is injective shows that $\pi_t$ is
injective.
\end{proof}


\section{The primitive ideal space of the \texorpdfstring{$C^*$}{C*}-algebra of a pullback}\label{sec:pullback algs}

In this section we consider pullback $k$-graphs of the form $f^*\Gamma$ where $f : \ZZ^k
\to G$ is a group homomorphism and $\Gamma$ is a $P$-graph for $P = f(\NN^k)$.
Proposition~\ref{central  unitaries} and Lemma~\ref{quotient iso} combine to show that,
putting $H := \ker(f)$, the $C^*$-algebra $C^*(f^*\Gamma)$ is a $C(\widehat{H})$-algebra
with fibres identical to $C^*(\Gamma)$. We use this to give a complete listing of the
irreducible representations of $C^*(f^*\Gamma)$ in terms of the irreducible
representations of $C^*(\Gamma)$ and characters of $\ker f$. We begin by introducing
pullbacks of $P$-graphs.

\begin{dfn}[{cf. \cite[Definition~1.9]{KP1}}]
Let $P$ and $Q$ be finitely generated cancellative abelian monoids, and let
$f:P\rightarrow Q$ be a monoid morphism. If $(\Gamma,d)$ is a $Q$-graph, we define the
$P$-graph $f^*\Gamma$ as follows: $f^*\Gamma=\{(\lambda,n):d(\lambda)=f(n)\}$ with
$d(\lambda,n)=n$, $s(\lambda,n)=s(\lambda)$ and $r(\lambda,n)=r(\lambda)$. Composition is
given by $(\mu,m)(\nu,n) = (\mu\nu,m+n)$.
\end{dfn}

For $g,h \in \ZZ^k$, we write $g \vee h$ for the coordinatewise maximum of $g$ and $h$
and $g \wedge h$ for the coordinatewise minimum. Given $h \in \ZZ^k$ we define $h_+ := h
\vee 0$ and $h_- := -(h \wedge 0)$. We then have
$h = h_+ - h_-$ with $h_+ \wedge h_- = 0$.

\begin{lem}\label{prop1}
Let $H$ be a subgroup of $\ZZ^k$, let $G = \ZZ^k/H$ and let $f : \ZZ^k \to G$ be the
quotient map. Let $P = f(\NN^k) \subseteq G$. Suppose that $\Gamma$ is a row-finite
$P$-graph with no sources. Then $f^*\Gamma$ is a row-finite $k$-graph with no sources.
Suppose that $h\in H$ and $\lambda\in \Gamma$ satisfy $d(\lambda)=f(h_+)$. Then
$(\lambda,h_+)$ and $(\lambda,h_-)$ both belong to $f^*\Gamma$.
\end{lem}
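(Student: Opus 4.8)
The plan is to check directly that $f^*\Gamma$ satisfies the defining axioms of a $k$-graph, transferring each piece of structure from the $P$-graph $\Gamma$ along $f$, exactly as in the proof that an ordinary pullback of a $k$-graph is again a $k$-graph (cf. \cite[Definition~1.9]{KP1}); the second assertion will then drop out of the definitions.

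First I would confirm that $f^*\Gamma$ is a countable small category. Countability is immediate since $\Gamma$ and $\NN^k$ are countable, and if $(\mu,m),(\nu,n)\in f^*\Gamma$ with $s(\mu)=r(\nu)$, then their composite $(\mu\nu,m+n)$ again lies in $f^*\Gamma$ because $d$ is a functor and $f$ a monoid morphism, so $d(\mu\nu)=d(\mu)+d(\nu)=f(m)+f(n)=f(m+n)$. Associativity is inherited from $\Gamma$ and from $\ZZ^k$, and the identity morphisms are the pairs $(v,0)$ with $v\in\Gamma^0$; these are precisely the elements of $(f^*\Gamma)^0=d^{-1}(0)$, since $d(\lambda,n)=0$ forces $n=0$ and hence $\lambda\in d^{-1}(f(0))=\Gamma^0$. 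The map $(\lambda,n)\mapsto n$ is visibly a functor $f^*\Gamma\to\NN^k$.

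The substantive point is the factorisation property. Suppose $(\xi,n)\in f^*\Gamma$ and $n=p+q$ with $p,q\in\NN^k$. Then $d(\xi)=f(n)=f(p)+f(q)$ in $P$, so the factorisation property for the $P$-graph $\Gamma$ yields \emph{unique} $\eta,\zeta\in\Gamma$ with $d(\eta)=f(p)$, $d(\zeta)=f(q)$ and $\xi=\eta\zeta$. Then $(\eta,p),(\zeta,q)\in f^*\Gamma$, their composite is $(\eta\zeta,p+q)=(\xi,n)$, and uniqueness of the pair $(\eta,p),(\zeta,q)$ follows from uniqueness of $\eta,\zeta$. Hence $f^*\Gamma$ is a $k$-graph. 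For ``row-finite with no sources'', fix $v\in(f^*\Gamma)^0=\Gamma^0$ and $n\in\NN^k$; then $\lambda\mapsto(\lambda,n)$ is a bijection of $v\Gamma^{f(n)}$ onto $v(f^*\Gamma)^n$, so $0<|v(f^*\Gamma)^n|=|v\Gamma^{f(n)}|<\infty$ because $\Gamma$ is row-finite with no sources.

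For the final assertion, note that $h_+$ and $h_-$ both lie in $\NN^k$, and that $f(h_+)=f(h_-)$ because $h=h_+-h_-\in H=\ker f$. Hence the hypothesis $d(\lambda)=f(h_+)$ gives both $d(\lambda)=f(h_+)$ and $d(\lambda)=f(h_-)$, which are exactly the conditions for $(\lambda,h_+)$ and $(\lambda,h_-)$ to belong to $f^*\Gamma$. The only place any care is needed is in matching up the unique factorisation in $\Gamma$ with that in $f^*\Gamma$; the remaining verifications are routine bookkeeping.
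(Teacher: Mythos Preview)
Your proof is correct and follows essentially the same approach as the paper: verify the category axioms and functoriality, transfer the factorisation property from $\Gamma$ to $f^*\Gamma$ via $f$, and obtain the second assertion from $f(h_+)=f(h_-)$ since $h_+-h_-=h\in\ker f$. You are actually more thorough than the paper, which omits the explicit check that $f^*\Gamma$ is row-finite with no sources.
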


\begin{proof}
Clearly $f^*\Gamma$ is a countable category and $(\lambda,n)\mapsto n$ is a functor. We
check the factorisation property; if $(\lambda,m+n)\in f^*\Gamma$, then
$d(\lambda)=f(m)+f(n)$. So $\lambda$ factorises uniquely as $\lambda=\mu\nu$ with
$d(\mu)=f(m)$, $d(\nu)=f(n)$, and then $(\lambda,m+n)=(\mu,m)(\nu,n)$ is the unique
factorisation with $d(\mu,m)=m$ and $d(\nu,n)=n$.

For the second assertion, observe that $f(h_+)=f(h_-+h)=f(h_-)+f(h)=f(h_-)$ since $h\in
H=\ker(f)$.
\end{proof}

In the following proposition, $\{U_h : h \in H\}$ denotes the canonical collection of
unitary generators of the group $C^*$-algebra $C^*(H)$. We write $\mathcal{ZM}(A)$ for
the centre of the multiplier algebra of a $C^*$-algebra $A$.

\begin{prop}\label{central  unitaries}
Let $G,f,P,H$ be as in Lemma~\ref{prop1}. Let $\Gamma$ be a row-finite $P$-graph with no
sources. For each $h\in H$ and $v\in\Gamma^0$, let $u_{v,h}:=\sum_{\lambda\in
v\Gamma^{f(h_+)}}s_{(\lambda,h_+)}s^*_{(\lambda,h_-)}$. Then $\sum_{v\in\Gamma} u_{v,h}$
converges strictly to a central unitary multiplier
$V_h:=\sum_{d(\lambda)=f(h_+)}s_{(\lambda,h_+)}s^*_{(\lambda,h_-)}$ of $C^*(f^*\Gamma)$.
Moreover, there is an injective homomorphism $\rho: C^*(H)\to
\mathcal{ZM}(C^*(f^*\Gamma))$ such that $\rho(U_h)=V_h$ for all $h\in H$.
\end{prop}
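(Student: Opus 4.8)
The plan is to build $V_h$ as the strict limit of the partial sums $V_h^F:=\sum_{v\in F}u_{v,h}$ over finite $F\subseteq\Gamma^0$, to check it is a central unitary multiplier and that $h\mapsto V_h$ is a unitary representation of $H$, and finally to deduce injectivity of $\rho$ from the gauge action.

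First I would verify that each $u_{v,h}$ is a partial isometry with $u_{v,h}^*u_{v,h}=u_{v,h}u_{v,h}^*=s_v$ and $u_{v,h}s_w=s_wu_{v,h}=\delta_{v,w}u_{v,h}$. The computation uses (CK1)--(CK3) to collapse $s_{(\lambda,h_+)}^*s_{(\mu,h_+)}=\delta_{\lambda,\mu}s_{s(\lambda)}$ (paths of equal degree admit no nontrivial common extension), then (CK4) together with the observation from Lemma~\ref{prop1} that $f(h_+)=f(h_-)$: this makes $\{(\lambda,h_-):\lambda\in v\Gamma^{f(h_+)}\}=v(f^*\Gamma)^{h_-}$, and likewise for $h_+$, so that $\sum_{\lambda\in v\Gamma^{f(h_+)}}s_{(\lambda,h_-)}s_{(\lambda,h_-)}^*=s_v=\sum_{\lambda\in v\Gamma^{f(h_+)}}s_{(\lambda,h_+)}s_{(\lambda,h_+)}^*$. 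Since the $u_{v,h}$ then have mutually orthogonal (and pairwise equal) initial and final projections $s_v$, each $V_h^F$ is a partial isometry with $(V_h^F)^*V_h^F=V_h^F(V_h^F)^*=\sum_{v\in F}s_v$. As $\bigl(\sum_{v\in F}s_v\bigr)_F$ is an approximate identity for $C^*(f^*\Gamma)$ converging strictly to $1$, and as $V_h^Fs_\mu s_\nu^*$ is eventually (once $r(\mu)\in F$) equal to $u_{r(\mu),h}s_\mu s_\nu^*$, with the symmetric statement for right multiplication, boundedness and density show $(V_h^F)_F$ converges strictly to a multiplier $V_h$; joint strict continuity of multiplication and of the involution on bounded subsets of $M(C^*(f^*\Gamma))$ then gives $V_h^*V_h=V_hV_h^*=1$. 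Regrouping the defining sum by range vertex identifies $V_h$ with $\sum_{d(\lambda)=f(h_+)}s_{(\lambda,h_+)}s_{(\lambda,h_-)}^*$.

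The technical core is an \emph{inflation identity}: for every $m\in\NN^k$ with $m\ge h_+$ one has
\[
V_h=\sum_{d(\kappa)=f(m)}s_{(\kappa,m)}s_{(\kappa,m-h)}^*,
\]
a strictly convergent sum. This follows from the factorisation property: writing $\kappa=\kappa_1\kappa_2$ with $d(\kappa_1)=f(h_+)$ and $d(\kappa_2)=f(m-h_+)$, one has $(\kappa,m)=(\kappa_1,h_+)(\kappa_2,m-h_+)$ and $(\kappa,m-h)=(\kappa_1,h_-)(\kappa_2,m-h_+)$ (using $h=h_+-h_-$ and $f(h_+)=f(h_-)$), and then (CK4) summed over $\kappa_2$ collapses the middle to $s_{s(\kappa_1)}$, leaving $\sum_{d(\kappa_1)=f(h_+)}s_{(\kappa_1,h_+)}s_{(\kappa_1,h_-)}^*=V_h$. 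I expect the bookkeeping that legitimises these manipulations of strictly convergent infinite sums --- the point being that everything is block diagonal with respect to the projections $\sum_{v\in F}s_v$, so the subsums indexed by a fixed range vertex are finite --- to be the fiddliest, though routine, part of the write-up. Granting the inflation identity, centrality is short: to check $V_hs_{(\lambda,n)}=s_{(\lambda,n)}V_h$, pick $m\ge h_++n$ and expand $V_h=\sum_{d(\kappa)=f(m)}s_{(\kappa,m)}s_{(\kappa,m-h)}^*$ on the left (only the finitely many $\kappa$ with $r(\kappa)=r(\lambda)$ contribute) and $V_h=\sum_{d(\nu)=f(m-n)}s_{(\nu,m-n)}s_{(\nu,m-n-h)}^*$ on the right; factoring $\kappa$ through $\lambda$ shows both sides equal $\sum s_{(\lambda\nu,m)}s_{(\nu,m-n-h)}^*$, with the sums running over the same index set $\{\nu:d(\nu)=f(m-n)=f(m-n-h),\ r(\nu)=s(\lambda)\}$ because $h\in\ker f$. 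Since the elements of $C^*(f^*\Gamma)$ commuting with $V_h$ form a $C^*$-subalgebra containing every $s_{(\lambda,n)}$ and hence every $s_{(\lambda,n)}^*$, $V_h$ commutes with all of $C^*(f^*\Gamma)$, so $V_h\in\mathcal{ZM}(C^*(f^*\Gamma))$. Likewise $V_0=1$, and for $m\ge h_++h'_+$ the relation $V_hV_{h'}=V_{h+h'}$ drops out by telescoping $\sum_{d(\kappa)=f(m)}s_{(\kappa,m)}s_{(\kappa,m-h)}^*$ against $\sum_{d(\nu)=f(m-h)}s_{(\nu,m-h)}s_{(\nu,m-h-h')}^*$ via $s_{(\kappa,m-h)}^*s_{(\nu,m-h)}=\delta_{\kappa,\nu}s_{s(\kappa)}$. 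By the universal property of $C^*(H)$, $h\mapsto V_h$ induces a homomorphism $\rho:C^*(H)\to\mathcal{ZM}(C^*(f^*\Gamma))$ with $\rho(U_h)=V_h$.

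It remains to see $\rho$ is injective, and for this I would use the gauge action $\gamma$ of $\TT^k=\widehat{\ZZ^k}$ on $C^*(f^*\Gamma)$ (available since $f^*\Gamma$ is a $k$-graph), extended to the multiplier algebra. From $V_h=\sum_{d(\lambda)=f(h_+)}s_{(\lambda,h_+)}s_{(\lambda,h_-)}^*$ one reads off $\gamma_z(V_h)=z^{h_+-h_-}V_h=z^hV_h$ (checked by evaluating both sides on each $s_v$), so $\rho$ intertwines $\gamma$ with the action $z\mapsto\beta_z$ of $\TT^k$ on $C^*(H)\cong C(\widehat H)$ determined by $\beta_z(U_h)=z^hU_h$; under the Gelfand transform $\beta_z$ is translation by the image of $z$ under the restriction map $\TT^k\to\widehat H$, which is surjective because $\TT$ is divisible, so $\beta$ realises every translation of $\widehat H$. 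Hence $\ker\rho$ is a $\beta$-invariant ideal of $C(\widehat H)$, i.e.\ corresponds to an open, translation-invariant subset of the compact group $\widehat H$; as translations act transitively this subset is empty or all of $\widehat H$, and since $\rho(U_0)=V_0=1\ne0$ --- the generators of $C^*(f^*\Gamma)$ being nonzero by Lemma~\ref{lem:nonzero} --- we conclude $\ker\rho=\{0\}$.
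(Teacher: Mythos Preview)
Your proof is correct and follows the same overall architecture as the paper's: verify that each $u_{v,h}$ is a partial isometry with initial and final projection $s_{(v,0)}$, establish strict convergence, check unitarity and centrality, invoke the universal property of $C^*(H)$, and deduce injectivity from equivariance with a gauge-type action. The main difference is your \emph{inflation identity} $V_h=\sum_{d(\kappa)=f(m)}s_{(\kappa,m)}s_{(\kappa,m-h)}^*$ for $m\ge h_+$, which you use both for centrality and to verify $V_hV_{h'}=V_{h+h'}$; the paper instead proves centrality by a direct expansion of $V_h s_{(\lambda,m)}$ and $s_{(\lambda,m)}V_h$ via the Cuntz--Krieger relations, and does not explicitly write out the check that $V_hV_{h'}=V_{h+h'}$. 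Your inflation identity is a tidy device that makes both the commutation and the multiplicativity transparent at once. For injectivity, the paper works directly with an action of $\widehat{H}$ on $C^*(f^*\Gamma)$ defined by $\beta_\chi(s_{(\lambda,n)})=\chi(n)s_{(\lambda,n)}$ and observes that $\rho$ is nonzero and equivariant for this and the dual action on $C^*(H)$; you instead use the full $\TT^k$ gauge action and pass to $\widehat{H}$ via the surjective restriction map $\TT^k\to\widehat{H}$. These are the same equivariance idea, packaged differently.
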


\begin{proof}
 For $v \in \Gamma^0$ and $h \in H$, we claim that $u_{v,h}$ is a partial
isometry whose initial and final projections are both equal to $s_{(v,0)}$. Indeed
\[\begin{split}
u_{v,h}^* u_{v,h}
        &= \sum_{\lambda,\mu \in v\Gamma^{f(h_+)}} s_{(\lambda,h_-)} s^*_{(\lambda, h_+)} s_{(\mu,h_+)} s^*_{(\mu, h_-)}\\
        &= \sum_{\lambda,\mu \in v\Gamma^{f(h_+)}} \delta_{\lambda,\mu} s_{(\lambda,h_-)} s^*_{(\mu, h_-)}
        = \sum_{\eta \in (v,0)(f^*\Gamma)^{h_-}} s_\eta s^*_\eta
        = s_{(v,0)},
\end{split}\]
and a similar calculation shows that $u_{v,h}u^*_{v,h} = s_{(v,0)}$ also.

By, for example, the argument of \cite[Lemma~2.10]{Raeburn:Graphalgebras05}, for each $a
\in C^*(f^*\Gamma)$, we have $\sum_{v \in F} s_{(v,0)} a \to a$ as $F$ increases over
finite subsets of $\Gamma^0$. So for $a \in C^*(f^*\Gamma)$ and $\varepsilon > 0$, there
exists a finite $F \subseteq \Gamma^0$ such that $\|a - \sum_{v \in F} s_v a\| <
\varepsilon$. Thus, for $K \subseteq L \subseteq \Gamma^0 \setminus F$ and $h\in H$, we
have
\begin{align*}
\Big\|\sum_{v \in L} u_{v,h} a - \sum_{w \in K} u_{w,h} a\Big\|
    &= \Big\|\sum_{v \in L \setminus K} u_{v,h} a\Big\| \\
    &= \Big\|\sum_{v \in L \setminus K} u_{v,h}\Big(a - \sum_{w \in F} s_{(w,0)} a\Big)\Big\| \quad\text{since $L\subseteq \Gamma^0\setminus F$}\\
    &\le \varepsilon\Big\|\sum_{v \in L \setminus K} u_{v,h}\Big\| \\
    &= \varepsilon
\end{align*}
since the $u_{v,h}$ are partial isometries whose initial projections are mutually
orthogonal and whose final projections are also mutually orthogonal. Hence the net
$\big\{\sum_{v \in F} u_{v,h} a : F \subseteq \Gamma^0\text{ is finite}\}$ is Cauchy, so
converges. Thus, the series
\[
    \sum_{v \in \Lambda^0} u_{v,h} = \sum_{d(\lambda)=f(h_+)}s_{(\lambda,h_+)}s^*_{(\lambda,h_-)}
\]
converges strictly to a multiplier $V_h$ of $C^*(f^*\Gamma)$.

Since $(-h)_+ = h_-$ and $(-h)_- = h_+$ for all $h \in \ZZ^k$, we have $u_{v,h}^* =
u_{v,-h}$ for all $v, h$. Hence $V^*_h = V_{-h}$ for all $h \in H$. The $V_h$ are unitary
because
\[
    (V_h^* V_h a) = \lim_{F,K} \sum_{v \in F, w \in K} u_{v,h}^* u_{v,h} a
            = \lim_{F} \sum_{v \in F} s_{(v,0)} a
            = a,
\]
so $V_h^* V_h = 1_{\Mm(C^*(f^*\Gamma))}$, and then $V_h V^*_h = V^*_{-h} V_{-h} =
1_{\Mm(C^*(f^*\Gamma))}$ also.

To see that the $V_h$ are central, observe that
\[
V_h s_{(\lambda,m)} = \lim_{F} \sum_{v \in F} u_{v,h} s_{(\lambda,m)}
    = \lim_F \sum_{v \in F} \sum_{\mu \in v\Gamma^{f(h_+)}} s_{(\mu, h_+)} s^*_{(\mu,h_-)} s_{(\lambda,m)}.
\]
Applying the Cuntz-Krieger relation, we obtain
\[
V_h s_{(\lambda,m)}
    = \lim_F \sum_{v \in F} \sum_{\mu \in v\Gamma^{f(h_+)}}
        \sum_{\substack{\alpha \in s(\mu)\Gamma^{f(m)}\\ \beta \in s(\lambda)\Gamma^{f(h_-)}}}
        s_{(\mu\alpha, h_+ + m)} s^*_{(\mu\alpha, h_- + m)} s_{(\lambda\beta,h_- + m)}s^*_{(\beta,h_-)}.
\]
The only nonzero terms are those where $\mu\alpha = \lambda\beta$. Since $\Gamma$ has no sources,
 for each $\beta \in s(\lambda)\Gamma^{f(h_-)}$ there is a unique
$\mu \in r(\lambda)\Gamma^{f(h_-)}$ and a unique $\alpha \in s(\mu)\Gamma^{f(m)}$ such that
$\mu\alpha = \lambda\beta$. So the final sum above collapses to give
\[
V_h s_{(\lambda,m)}
    = \sum_{\beta \in s(\lambda)\Gamma^{f(h_-)}} s_{(\lambda\beta, h_+ + m)} s^*_{(\beta, h_-)}.
\]
On the other hand,
\[\begin{split}
s_{(\lambda,m)} V_h
    &=s_{(\lambda,m)} \lim_F \sum_{v\in F} \sum_{\beta \in v\Gamma^{f(h_+)}} s_{(\beta, h_+)} s^*_{(\beta, h_-)}\\
    &= \sum_{\beta \in s(\lambda)\Gamma^{f(h_+)}} s_{(\lambda,m)}s_{(\beta, h_+)} s^*_{(\beta, h_-)}
    = \sum_{\beta \in s(\lambda)\Gamma^{f(h_+)}} s_{(\lambda\beta, h_+ + m)} s^*_{(\beta, h_-)}
\end{split}\]
as required.

The universal property of $C^*(H)$ implies that there is a homomorphism $\rho:C^*(H)\to
\mathcal{ZM}(C^*(f^*\Gamma))$ such that $\rho(U_h)=V_h$ for all $h\in H$. To see that
$\rho$ is injective, define an action $\beta$ of $\widehat{H}$ on $C^*(f^*\Gamma)$ by
$\beta_{\chi}(s_{(\lambda,n)}) = \chi(n)s_{(\lambda,n)}$. Then $\beta$ extends to an
action of $\widehat{H}$ on $\Mm C^*(f^*\Gamma)$ such that $\beta_{\chi}(V_h)=\chi(h)
V_h$. So $\rho$ is nonzero and equivariant for $\beta$ and the dual action of
$\widehat{H}$ on $C^*(H)$. Therefore $\rho$ is injective.
\end{proof}

\begin{lem}\label{quotient iso}
Let $G,f,P,H$ be as in Lemma~\ref{prop1}. Let $\Gamma$ be a row-finite $P$-graph with no
sources. For $z \in \TT^k$, let $I_z$ be the ideal of $C^*(f^*\Gamma)$ generated by
$\{z^{-n_1}s_{(\lambda,n_1)} -  z^{-n_2}s_{(\lambda,n_2)}: \lambda\in\Gamma,\ n_1,n_2\in
\NN^k,\ f(n_1)=f(n_2)=d(\lambda)\}$. Then there is an isomorphism $\psi_z :
C^*(f^*\Gamma)/I_z \to C^*(\Gamma)$ such that $\psi_z(s_{(\lambda, n)} + I_z)= z^{n}
s_\lambda$ for all $\lambda \in \Gamma$.
\end{lem}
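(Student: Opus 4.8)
The plan is to construct the isomorphism via the universal property of $C^*(f^*\Gamma)$ together with the Cuntz-Krieger uniqueness type results already established. First I would fix $z\in\TT^k$ and check that the elements $\{z^{n}s_\lambda : \lambda\in\Gamma\}\subseteq C^*(\Gamma)$ (where, abusing notation, $z^n$ makes sense because $n\in\NN^k$) form a Cuntz-Krieger $f^*\Gamma$-family: relations (CK1)-(CK3) are immediate since the scalars $z^n$ are unimodular, and (CK4) for $(v,0)\in(f^*\Gamma)^0$ and $m\in\NN^k$ follows from the computation $\sum_{(\mu,m)\in (v,0)(f^*\Gamma)^m} (z^m s_\mu)(z^m s_\mu)^* = \sum_{\mu\in v\Gamma^{f(m)}} s_\mu s_\mu^* = s_v$, using that $(\mu,m)\mapsto\mu$ is a bijection from $(v,0)(f^*\Gamma)^m$ onto $v\Gamma^{f(m)}$. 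The universal property then yields a homomorphism $\varphi_z:C^*(f^*\Gamma)\to C^*(\Gamma)$ with $\varphi_z(s_{(\lambda,n)})=z^n s_\lambda$; it is surjective since its range contains every $s_\lambda = \varphi_z(s_{(\lambda,n)})$ for any choice of $n$ with $f(n)=d(\lambda)$ (such $n$ exists because $P=f(\NN^k)$), and these generate $C^*(\Gamma)$.

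Next I would identify the kernel of $\varphi_z$ as exactly $I_z$. The inclusion $I_z\subseteq\ker\varphi_z$ is a direct calculation: $\varphi_z(z^{-n_1}s_{(\lambda,n_1)} - z^{-n_2}s_{(\lambda,n_2)}) = z^{-n_1}z^{n_1}s_\lambda - z^{-n_2}z^{n_2}s_\lambda = s_\lambda - s_\lambda = 0$, and since $\ker\varphi_z$ is an ideal it contains the ideal generated by these elements. Hence $\varphi_z$ descends to a surjection $\psi_z:C^*(f^*\Gamma)/I_z\to C^*(\Gamma)$ with $\psi_z(s_{(\lambda,n)}+I_z)=z^n s_\lambda$, and it remains to show $\psi_z$ is injective, equivalently that $\ker\varphi_z\subseteq I_z$.

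For injectivity I would appeal to the gauge-invariant uniqueness theorem, Proposition~\ref{prp:Pgraph giut}, applied to $C^*(\Gamma)$ (with monoid $P$ and Grothendieck group $G'$, the subgroup $P$ generates): the images $t_\lambda := z^n s_\lambda$ of a spanning family need a gauge action on $C^*(\Gamma)/\psi_z(I_z)$... but since $\psi_z$ is already known surjective, the cleaner route is to build a gauge action directly on the quotient $C^*(f^*\Gamma)/I_z$ and compare it with the canonical one on $C^*(\Gamma)$. Concretely: let $\gamma$ be the canonical $\widehat{G'}$-action on $C^*(\Gamma)$ from before Proposition~\ref{prp:Pgraph giut}. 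For $\chi\in\widehat{G'}$, the elements $\chi(d(\lambda))z^n s_\lambda$ again form a Cuntz-Krieger $f^*\Gamma$-family in $C^*(\Gamma)$ satisfying the same kernel relations, so there is an action $\alpha$ of $\widehat{G'}$ on $C^*(f^*\Gamma)/I_z$ intertwined with $\gamma$ by $\psi_z$; since $\psi_z$ is equivariant, surjective, and maps the fixed-point algebra onto the fixed-point algebra, and since $\psi_z(s_{(v,0)}+I_z) = s_v\neq 0$ in $C^*(\Gamma)$ by Lemma~\ref{lem:nonzero}, Lemma~\ref{lem:direct limit} shows $\psi_z$ is injective on the fixed-point algebra, and then the standard conditional-expectation argument from the proof of Proposition~\ref{prp:Pgraph giut} upgrades this to injectivity of $\psi_z$ on all of $C^*(f^*\Gamma)/I_z$. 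I expect the main obstacle to be bookkeeping: verifying (CK4) carefully for the twisted families and checking that the relations defining $I_z$ are exactly the obstruction that collapses under the gauge argument, i.e. that $I_z$ already contains enough to make the quotient's fixed-point algebra match that of $C^*(\Gamma)$ via the AF structure of Lemma~\ref{lem:direct limit}.
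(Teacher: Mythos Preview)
Your first two steps---constructing the surjection $\varphi_z:C^*(f^*\Gamma)\to C^*(\Gamma)$ via the universal property, and checking that $I_z\subseteq\ker\varphi_z$ so that $\varphi_z$ descends to $\psi_z$---match the paper exactly.

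The injectivity argument, however, has a genuine gap. You invoke Lemma~\ref{lem:direct limit} to conclude that $\psi_z$ is injective on the fixed-point algebra from the fact that $\psi_z(s_{(v,0)}+I_z)=s_v\neq 0$. But Lemma~\ref{lem:direct limit} is a statement about homomorphisms \emph{out of} $C^*(\Gamma)$: it says that $\phi:C^*(\Gamma)\to B$ is injective on $C^*(\Gamma)^\gamma$ iff every $\phi(s_v)\neq 0$. Your map $\psi_z$ goes the other way, from the quotient $C^*(f^*\Gamma)/I_z$ into $C^*(\Gamma)$, so the lemma does not apply. To salvage your route you would need an analogue of Lemma~\ref{lem:direct limit} for the quotient, which in turn means identifying the $\widehat{G}$-fixed-point algebra of $C^*(f^*\Gamma)/I_z$ and proving it is AF with the right direct-limit structure; none of this is available off the shelf.

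The paper sidesteps all of this by simply building an explicit inverse. Observe that for fixed $\lambda\in\Gamma$ the element $z^{-n}s_{(\lambda,n)}+I_z$ is independent of the choice of $n\in f^{-1}(d(\lambda))$, precisely because the differences $z^{-n_1}s_{(\lambda,n_1)}-z^{-n_2}s_{(\lambda,n_2)}$ lie in $I_z$ by definition. So one can set $t_\lambda:=z^{-n}s_{(\lambda,n)}+I_z$ and check directly that $\{t_\lambda:\lambda\in\Gamma\}$ is a Cuntz-Krieger $\Gamma$-family in $C^*(f^*\Gamma)/I_z$; the universal property of $C^*(\Gamma)$ then yields a homomorphism $C^*(\Gamma)\to C^*(f^*\Gamma)/I_z$ which is inverse to $\psi_z$ on generators, hence everywhere. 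This is both shorter and avoids any appeal to uniqueness theorems or conditional expectations.
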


\begin{proof}
The set $\{z^n s_\lambda : (\lambda, n) \in f^*\Gamma\}$ is a Cuntz-Krieger
$f^*\Gamma$-family. Hence there is a homomorphism $\psi: C^*(f^*\Gamma) \to C^*(\Gamma)$
carrying each $s_{(\lambda,n)}$ to $z^n s_\lambda$. Each generator of  $I_z$ belongs to
$\ker \psi$, and hence $\psi$ descends to a homomorphism $\psi_z : C^*(f^*\Gamma)/I_z \to
C^*(\Gamma)$ satisfying $\psi_z(s_{(\lambda,n)} + I_z) = z^{n} s_\lambda$.

To see that this $\psi_z$ is an isomorphism, we show that it has an inverse.
Fix $\lambda \in \Gamma$ and $n_1, n_2 \in \NN^k$ such that $f(n_1) = f(n_2) = d(\lambda)$.
Then $z^{-n_1}s_{(\lambda, n_1)} - z^{-n_2}s_{(\lambda, n_2)} \in I_z$.
So we may define a collection $\{t_\lambda : \lambda \in \Gamma\} \subseteq C^*(f^*\Gamma)/I_z$
by  $t_\lambda = z^{-n}s_{(\lambda,n)} + I_z$ for any $n \in f^{-1}(d(\lambda))$; in particular $t_v =
s_{(v,0)} +I_z$ for all $v \in \Gamma^0$. We show that the $t_\lambda$ form a Cuntz-Krieger
$\Gamma$-family. The $t_v$ are mutually orthogonal projections because the $s_{(v,0)}$ are.
Suppose that $s(\mu) = r(\nu)$, and fix $m,n$ such that $f(m) = d(\mu)$ and $f(n) = d(\nu)$. Then
$f(m+n) = d(\mu\nu)$, and so
\[
t_\mu t_\nu = z^{-m}s_{(\mu,m)} z^{-n}s_{(\nu,n)} + I_z = z^{-m-n}s_{(\mu\nu,m+n)} + I_z= t_{\mu\nu}.
\]
Moreover, $t^*_\mu t_\mu = s^*_{(\mu,m)} s_{(\mu,m)}+ I_z = s_{(s(\mu), 0)} + I_z =
t_{s(\mu)}$. Fix $v \in \Gamma^0$, $p \in P$ and $m \in \ZZ^k$ with $f(m) =p$. Then
\[
t_{v} = s_{(v,0)} + I_z
    = \sum_{\alpha \in (v,0)(f^*\Gamma)^m} s_\alpha s^*_\alpha + I_z
    = \sum_{\lambda \in v\Gamma^p} s_{(\lambda,m)} s^*_{(\lambda,m)} + I_z
    = \sum_{\lambda \in v\Gamma^p} t_\lambda t^*_\lambda.
\]
So $\{t_\lambda : \lambda \in \Gamma\}$ is a Cuntz-Krieger $\Gamma$-family as claimed.
Hence there is a homomorphism $C^*(\Gamma) \to C^*(f^*\Gamma)/I_z$ satisfying $s_\lambda
\mapsto z^{-n}s_{(\lambda,n)} + I_z$ for any $n \in f^{-1}(d(\lambda))$; this
homomorphism is an inverse for $\psi_z$.
\end{proof}

In the following theorem, given a $C^*$-algebra $A$, we write $\operatorname{Irr}(A)$ for
the collection of all irreducible representations of $A$.

\begin{thm}\label{thm:bijection}
Let $G$ be a finitely generated abelian group, and let $f : \ZZ^k \to G$ be a
homomorphism. Let $P = f(\NN^k) \subseteq G$ and let $H = \ker(f) \subseteq \ZZ^k$. Let
$\Gamma$ be a row-finite $P$-graph with no sources. For $z\in \TT^k$, let $q_z:
C^*(f^*\Gamma) \to C^*(f^*\Gamma)/ I_z$ be the quotient map, and let $\psi_z :
C^*(f^*\Gamma)/I_z \to C^*(\Gamma)$ be the isomorphism of Lemma~\ref{quotient iso}. Let
$\pi$ be an irreducible representation of $C^*(\Gamma)$. Then $\pi\circ \psi_z\circ q_z$
is an irreducible representation of $C^*(f^*\Gamma)$. Fix a map $\gamma\mapsto z_\gamma$
from $\widehat{H}$ to $\TT^k$ such that $z^h_\gamma=\gamma(h)$ for all $\gamma\in
\widehat{H}$ and $h\in H$. Then $(\gamma,\pi)\mapsto \pi\circ \psi_{z_\gamma}\circ
q_{z_\gamma}$ is a bijection of $\widehat{H} \times \operatorname{Irr}(C^*(\Gamma))$ onto
$\operatorname{Irr}(C^*(f^* \Gamma))$.
\end{thm}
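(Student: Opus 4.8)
The plan is to prove the theorem in three stages: first that each $\pi\circ\psi_z\circ q_z$ is irreducible, then that the stated map is surjective, and finally that it is injective. Throughout I will use the $C(\widehat H)$-algebra structure coming from Proposition~\ref{central unitaries}, which says that $\rho:C^*(H)\cong C(\widehat H)$ embeds onto a subalgebra of $\mathcal{ZM}(C^*(f^*\Gamma))$; concretely, a character $\gamma\in\widehat H$ determines a maximal ideal of this central copy of $C(\widehat H)$, hence a quotient (fibre) of $C^*(f^*\Gamma)$, and I would first check that this fibre is exactly $C^*(f^*\Gamma)/I_{z_\gamma}$. The point is that $V_h = \sum_{d(\lambda)=f(h_+)} s_{(\lambda,h_+)}s^*_{(\lambda,h_-)}$ acts in the quotient $C^*(f^*\Gamma)/I_z$, after applying $\psi_z$, as the scalar $z^h$ (using $\psi_z(s_{(\lambda,n)}+I_z)=z^n s_\lambda$ and the Cuntz-Krieger relation $\sum_{\lambda\in v\Gamma^{f(h_+)}} s_\lambda s^*_\lambda = s_v$), so $I_z$ is precisely the ideal generated by $\{V_h - z^h : h\in H\}$, i.e.\ the fibre of the $C(\widehat H)$-algebra over the point of $\widehat{H}=\operatorname{Prim}(C^*(H))$ corresponding to $\gamma$ when $z=z_\gamma$. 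This identification is the conceptual heart of the argument and everything else is bookkeeping.

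For irreducibility, since $\psi_z\circ q_z$ is a surjection onto $C^*(\Gamma)$ and $\pi$ is irreducible, $\pi\circ\psi_z\circ q_z$ is automatically irreducible. For surjectivity, let $\tau$ be an arbitrary irreducible representation of $C^*(f^*\Gamma)$. The central unitaries $V_h$ must act under $\tau$ as scalars of modulus $1$ (an irreducible representation sends central multipliers to scalars), and $h\mapsto \tau(V_h)$ is a character of $H$ because $h\mapsto V_h$ is; call it $\gamma$. Then $\tau$ kills every generator $z_\gamma^{-n_1}s_{(\lambda,n_1)} - z_\gamma^{-n_2}s_{(\lambda,n_2)}$ of $I_{z_\gamma}$: indeed $s_{(\lambda,n_1)}s^*_{(\lambda,n_2)}$ is (up to the projection $s_{(s(\lambda),0)}$) a product of the form appearing in $V_{n_1-n_2}$ when one sums over $\lambda$, so $\tau(s_{(\lambda,n_1)})= \gamma(n_1-n_2)\,\tau(s_{(\lambda,n_2)}) = z_\gamma^{n_1-n_2}\tau(s_{(\lambda,n_2)})$; I would spell out this computation carefully, as it is the one genuinely fiddly point. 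Hence $\tau$ factors through $C^*(f^*\Gamma)/I_{z_\gamma}$, giving an irreducible representation $\bar\tau$, and $\pi:=\bar\tau\circ\psi_{z_\gamma}^{-1}$ is an irreducible representation of $C^*(\Gamma)$ with $\tau = \pi\circ\psi_{z_\gamma}\circ q_{z_\gamma}$.

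For injectivity, suppose $\pi_1\circ\psi_{z_{\gamma_1}}\circ q_{z_{\gamma_1}}$ and $\pi_2\circ\psi_{z_{\gamma_2}}\circ q_{z_{\gamma_2}}$ are unitarily equivalent. Evaluating both on the central unitary $V_h$ gives $\gamma_1(h) = z_{\gamma_1}^h = z_{\gamma_2}^h = \gamma_2(h)$ for all $h\in H$, so $\gamma_1=\gamma_2=:\gamma$; then $\psi_{z_\gamma}\circ q_{z_\gamma}$ is a fixed surjection, so unitary equivalence of the composites forces $\pi_1\sim\pi_2$ as representations of $C^*(\Gamma)$ (a unitary intertwiner for the composites intertwines $\pi_1,\pi_2$ on the range of the surjection, which is all of $C^*(\Gamma)$). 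Note that the statement asserts a bijection of $\widehat H\times\operatorname{Irr}(C^*(\Gamma))$ onto $\operatorname{Irr}(C^*(f^*\Gamma))$, i.e.\ on the level of representations themselves and not equivalence classes; I would check that the argument above respects this, using that distinct $z_\gamma$ give genuinely distinct (not merely inequivalent) compositions because they disagree on the $V_h$, and that for fixed $\gamma$ the assignment $\pi\mapsto\pi\circ\psi_{z_\gamma}\circ q_{z_\gamma}$ is literally injective since $\psi_{z_\gamma}\circ q_{z_\gamma}$ is onto. The main obstacle I anticipate is the surjectivity step---specifically, verifying cleanly that an arbitrary irreducible $\tau$ annihilates $I_{z_\gamma}$ for $\gamma$ defined by $\gamma = \tau\circ(h\mapsto V_h)$; this needs the precise combinatorics of how $s_{(\lambda,n_1)}s^*_{(\lambda,n_2)}$ relates to the summands of $V_{n_1-n_2}$, together with the no-sources hypothesis to ensure the relevant factorisations exist.
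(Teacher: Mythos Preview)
Your plan is correct and matches the paper's argument closely: irreducibility from surjectivity of $\psi_z\circ q_z$; surjectivity by reading off a character $\gamma$ from the action of the central unitaries $V_h$ under an arbitrary irreducible $\tau$ and then factoring through $I_{z_\gamma}$ (this is exactly the content of Lemma~\ref{representation}); and injectivity by first recovering $\gamma$ and then using that $\psi_{z_\gamma}\circ q_{z_\gamma}$ is onto. One small caution on the step you flag as fiddly: the summands of $V_h$ carry degrees $(h_+,h_-)$, not an arbitrary pair $(n_1,n_2)$ with $n_1-n_2=h$, so the identity $\tau(s_{(\lambda,n_1)})=z_\gamma^{n_1-n_2}\tau(s_{(\lambda,n_2)})$ does not fall out quite as you sketch; rather, right-multiplying $u_{r(\lambda),h}\equiv z^h s_{(r(\lambda),0)}$ by $s_{(\lambda,h_-)}$ gives the case $(n_1,n_2)=(h_+,h_-)$, and the general case follows by writing $n_1=h_+ + (n_1\wedge n_2)$, $n_2=h_- + (n_1\wedge n_2)$ and factoring $\lambda$ accordingly in $\Gamma$. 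The paper's own Lemma~\ref{representation} glosses over this same reduction.
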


\begin{rmk}\label{rmk:unique character}
Theorem~\ref{thm:bijection} together with the definition of $I_z$ implies that if $\pi$
is an irreducible representation of $C^*(f^*\Gamma)$, then there is a unique character
$\gamma$ of $\widehat{H}$ such that $s_{(\lambda, n_1)} - \gamma(n_1 - n_2)s_{(\lambda,
n_2)} \in \ker(\pi)$ for all $\lambda \in \Gamma$ and $n_1, n_2 \in f^{-1}(d(\lambda))$.
\end{rmk}

We collect some more technical lemmas before proving Theorem \ref{thm:bijection}.

\begin{lem}\label{ideal comparison}
Let $G,f,P,H$ be as in Lemma~\ref{prop1}. Let $\Gamma$ be a row-finite $P$-graph with no sources.
For $z \in \TT^k$, let $I_z$ be the ideal of $C^*(f^*\Gamma)$ generated by
$\{z^{-n_1}s_{(\lambda,n_1)} -  z^{-n_2}s_{(\lambda,n_2)}: \lambda\in\Gamma,\ n_1,n_2\in \NN^k,\ f(n_1)=f(n_2)=d(\lambda)\}$.
For $w,z \in \TT^k$ we have $I_w = I_z$ if and only if $w^h = z^h$ for all $h \in H$.
\end{lem}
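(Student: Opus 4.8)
The plan is to prove both implications using the structural results already established, chiefly Lemma~\ref{quotient iso} and Proposition~\ref{central unitaries}.

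For the ``only if'' direction, suppose $w^h = z^h$ for all $h \in H$; I want to show $I_w = I_z$. By symmetry it suffices to show $I_z \subseteq I_w$, i.e.\ that each generator $z^{-n_1}s_{(\lambda,n_1)} - z^{-n_2}s_{(\lambda,n_2)}$ of $I_z$ lies in $I_w$. Working modulo $I_w$ and applying $\psi_w$ from Lemma~\ref{quotient iso}, we have $\psi_w(s_{(\lambda,n_i)} + I_w) = w^{n_i}s_\lambda$. Since $f(n_1) = f(n_2)$ we have $n_1 - n_2 \in H$, so $w^{n_1 - n_2} = z^{n_1 - n_2}$, whence $z^{-n_1}w^{n_1} = z^{-n_2}w^{n_2}$. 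Therefore
\[
\psi_w\big( (z^{-n_1}s_{(\lambda,n_1)} - z^{-n_2}s_{(\lambda,n_2)}) + I_w \big)
    = (z^{-n_1}w^{n_1} - z^{-n_2}w^{n_2}) s_\lambda = 0,
\]
and since $\psi_w$ is an isomorphism this forces $z^{-n_1}s_{(\lambda,n_1)} - z^{-n_2}s_{(\lambda,n_2)} \in I_w$. As $I_w$ is an ideal containing all the generators of $I_z$, we get $I_z \subseteq I_w$, and symmetrically $I_w \subseteq I_z$.

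For the ``if'' direction, suppose $I_w = I_z$; I must produce some $h \in H$ with $w^h \ne z^h$ from the assumption $w^h = z^h$ failing — equivalently, I prove the contrapositive is already done, so I assume $w^h = z^h$ fails for some $h$ and derive $I_w \ne I_z$. Pick $h \in H$ with $w^h \ne z^h$; I may assume $h = h_+ - h_-$ with $h_+ \wedge h_- = 0$. By Proposition~\ref{central unitaries}, $C^*(f^*\Gamma)$ carries central unitary multipliers $V_h = \sum_{d(\lambda) = f(h_+)} s_{(\lambda,h_+)}s^*_{(\lambda,h_-)}$, and via $\psi_z$ the image of $V_h$ in $C^*(\Gamma)$ (or rather in the multiplier algebra of the quotient) is a scalar multiple of the identity: indeed $\psi_z(s_{(\lambda,h_+)}s^*_{(\lambda,h_-)} + I_z) = z^{h_+}s_\lambda \cdot \overline{z^{h_-}} s^*_\lambda = z^{h_+ - h_-} s_\lambda s^*_\lambda = z^h s_\lambda s^*_\lambda$ (using that $f(h_+) = f(h_-)$ so $d(\lambda) = f(h_-)$ too, and $s_\lambda s^*_\lambda$ sums to the identity over $d(\lambda) = f(h_+)$). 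Hence $\psi_z$ carries $V_h + (\text{stuff from } I_z)$ to $z^h \cdot 1_{\Mm(C^*(\Gamma))}$; more precisely, $V_h - z^h 1 \in \ker(\psi_z \circ q_z)$ extended to multipliers, i.e.\ $V_h - z^h 1$ lies in $\Mm(I_z)$ (or: $q_z(V_h) = z^h 1$ in $\Mm(C^*(f^*\Gamma)/I_z)$). Similarly $q_w(V_h) = w^h 1$. If $I_w = I_z$ then $q_w = q_z$ up to an isomorphism of the range, giving $z^h 1 = w^h 1$ in the multiplier algebra of the common quotient — a contradiction since $z^h \ne w^h$ and that quotient is nonzero.

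The main obstacle is the bookkeeping with multiplier algebras in the ``if'' direction: $V_h$ is a multiplier, not an element of $C^*(f^*\Gamma)$, so one must be careful about what ``$V_h - z^h 1 \in I_z$'' means and justify that $q_z$ (resp.\ $\psi_z \circ q_z$) extends to multipliers and sends $V_h$ to the scalar $z^h$. The cleanest route is probably to avoid multipliers altogether: fix any $v \in \Gamma^0$, note $s_{(v,0)} = t_v \ne 0$ in $C^*(f^*\Gamma)/I_z$ (it is a nonzero projection since $\psi_z(t_v) = s_v \ne 0$ by Lemma~\ref{lem:nonzero}), and observe that the single summand $s_{(v,0)} V_h s_{(v,0)} = u_{v,h}$ satisfies $\psi_z\circ q_z(u_{v,h}) = z^h s_v$, whereas $\psi_w \circ q_w(u_{v,h}) = w^h s_v$; so if $I_w = I_z$ we would have the same element $u_{v,h} + I_z = u_{v,h} + I_w$ mapping to both $z^h s_v$ and $w^h s_v$ under isomorphisms of the (common) quotient onto $C^*(\Gamma)$, forcing $z^h = w^h$ since $s_v \ne 0$. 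This keeps everything inside the honest $C^*$-algebras.
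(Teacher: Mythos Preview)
Your argument for the direction ``$w^h = z^h$ for all $h \in H$ implies $I_w = I_z$'' is correct. The paper dispatches this more quickly by observing that each generator $z^{-n_1}s_{(\lambda,n_1)} - z^{-n_2}s_{(\lambda,n_2)}$ of $I_z$ is the nonzero scalar $z^{-n_1}w^{n_1}$ times the corresponding generator of $I_w$ (this is exactly the identity $z^{-n_1}w^{n_1} = z^{-n_2}w^{n_2}$ you derived), so the two ideals have the same generating set up to scalars; but your route via $\psi_w$ works too.

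For the converse, your ``cleanest route'' has a genuine gap. You correctly compute $\psi_z(q_z(u_{v,h})) = z^h s_v$ and $\psi_w(q_w(u_{v,h})) = w^h s_v$, and observe that if $I_z = I_w$ then $u_{v,h} + I_z = u_{v,h} + I_w$ is a single element of a common quotient. But you cannot conclude $z^h s_v = w^h s_v$ from this: $\psi_z$ and $\psi_w$ are \emph{different} isomorphisms of that quotient onto $C^*(\Gamma)$ (for instance $\psi_z(s_{(\lambda,n)} + I_z) = z^n s_\lambda$ while $\psi_w$ gives $w^n s_\lambda$), and distinct isomorphisms may send the same element to different targets. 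Ironically, the multiplier argument you abandoned was on firmer ground, since $I_z = I_w$ forces $q_z = q_w$ as maps, so their strict extensions agree on $V_h$. The clean fix, which is exactly what the paper does, is to pull your computation back through the injective map $\psi_z$: since $\psi_z$ sends both $q_z(u_{v,h})$ and $q_z(z^h s_{(v,0)})$ to $z^h s_v$, you obtain $u_{v,h} - z^h s_{(v,0)} \in I_z$, and similarly $u_{v,h} - w^h s_{(v,0)} \in I_w$. If $I_z = I_w$, subtracting gives $(z^h - w^h) s_{(v,0)} \in I_z$; since $s_{(v,0)} \notin I_z$ (its image $s_v$ under $\psi_z$ is nonzero), this forces $z^h = w^h$.
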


\begin{proof}
If $w^h = z^h$ for all $h \in H$, then $I_w = I_z$ because they have the same generators.
Now suppose that $I_w = I_z$. Fix $h \in H$. For $v \in \Gamma^0$ we have $w^h s_{(v,0)}
- u_{v,h} \in I_w = I_z \owns z^h  s_{(v,0)} - u_{v,h}$. Subtracting, we obtain $(w^h -
z^h) s_{(v,0)} \in I_z$. The isomorphism $\psi_z$ of Lemma~\ref{quotient iso} carries
each $s_{(v,0)} + I_z$ to $s_v$ which is nonzero by Proposition~\ref{prp:Pgraph giut}.
Hence $s_{(v,0)} \not\in I_z$, which forces $w^h - z^h = 0$. Hence $w^h = z^h$ for all $h
\in H$.
\end{proof}

\begin{lem}\label{representation}
Let $G,f,P,H$ be as in Lemma~\ref{prop1}. Let $\Gamma$ be a row-finite $P$-graph with no sources.
Let $\varphi$ be an irreducible representation of $C^*(f^*\Gamma)$ on a Hilbert space $\Hh$.
Then $\varphi$ has a unique strict extension to a representation
$\widetilde{\varphi}$ of $\Mm C^*(f^*\Gamma)$ and there is a unique character
$\gamma_\varphi$ of $H$ such that $\widetilde{\varphi}(V_h) = \gamma_\varphi(h)1_\Hh$ for
all $h \in H$. For any $z \in \TT^k$ such that $z^h = \gamma_{\varphi}(h)$ for
 all $h \in H$, we have $I_z \subseteq \ker(\varphi)$. There is an irreducible
 representation $\widehat{\varphi}$ of $C^*(f^*\Gamma)/I_z$ such that $\widehat{\varphi}(a
  + I_z)= \varphi(a)$ for all $a$, and $\widehat{\varphi} \circ \psi_z^{-1}$ is an
  irreducible representation of $C^*(\Gamma)$.
\end{lem}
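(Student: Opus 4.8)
The plan is to build $\widetilde\varphi$, then extract the character, then compare ideals. First I would establish the strict extension: since $\varphi$ is irreducible, it is nondegenerate, so by the standard theory of multiplier algebras (e.g.\ \cite[Proposition~2.1]{Raeburn:Graphalgebras05}) $\varphi$ extends uniquely to a strictly-to-strongly continuous unital representation $\widetilde\varphi$ of $\Mm C^*(f^*\Gamma)$ on $\Hh$. Next I would use that the $V_h$ are central unitary multipliers (Proposition~\ref{central  unitaries}): because $\varphi$ is irreducible, $\widetilde\varphi(\Mm C^*(f^*\Gamma))' = \CC 1_\Hh$, and each $\widetilde\varphi(V_h)$ is a unitary in the centre of $\widetilde\varphi(\Mm C^*(f^*\Gamma))'' $, hence is a scalar $\gamma_\varphi(h)1_\Hh$. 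The map $h \mapsto \gamma_\varphi(h)$ is a character of $H$ because $h \mapsto V_h$ is a unitary representation of $H$ and $\widetilde\varphi$ is multiplicative; uniqueness is automatic since $\gamma_\varphi(h)$ is literally determined by $\widetilde\varphi(V_h)$.

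The heart of the argument is showing $I_z \subseteq \ker(\varphi)$ for any $z \in \TT^k$ with $z^h = \gamma_\varphi(h)$ for all $h \in H$. The generators of $I_z$ are the elements $z^{-n_1}s_{(\lambda,n_1)} - z^{-n_2}s_{(\lambda,n_2)}$ with $f(n_1) = f(n_2) = d(\lambda)$. The key observation is the factorisation identity $V_h s_{(v,0)} = u_{v,h} = \sum_{\mu \in v\Gamma^{f(h_+)}} s_{(\mu,h_+)}s^*_{(\mu,h_-)}$ from Proposition~\ref{central  unitaries}, together with the relation $s_{(\lambda,n)} = \sum_{\mu \in r(\lambda)\Gamma^{f(h_+)}} \dots$; more directly, for $n_1, n_2 \in f^{-1}(d(\lambda))$ one has $n_1 - n_2 \in H$, and a direct computation using (CK2)--(CK4) shows $s_{(\lambda,n_1)} = V_{n_1 - n_2}\, s_{(\lambda,n_2)}$ in $\Mm C^*(f^*\Gamma)$ — indeed the defining formula for $V_h$ together with the factorisation property collapses the product $V_{n_1-n_2}s_{(\lambda,n_2)}$ exactly to $s_{(\lambda, n_1)}$, by the same collapsing computation that establishes centrality of $V_h$ in Proposition~\ref{central  unitaries}. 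Applying $\widetilde\varphi$ gives $\varphi(s_{(\lambda,n_1)}) = \gamma_\varphi(n_1 - n_2)\varphi(s_{(\lambda,n_2)}) = z^{n_1 - n_2}\varphi(s_{(\lambda,n_2)})$, so $\varphi$ kills each generator of $I_z$ and hence $I_z \subseteq \ker\varphi$. I expect this step — pinning down the identity $s_{(\lambda,n_1)} = V_{n_1-n_2}s_{(\lambda,n_2)}$ cleanly, being careful that $n_1 - n_2$ need not lie in $\NN^k$ so that one must split it as $(n_1-n_2)_+ - (n_1-n_2)_-$ and invoke Lemma~\ref{prop1} — to be the main obstacle, though it is essentially a repackaging of the centrality computation already carried out.

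Finally, with $I_z \subseteq \ker\varphi$, the representation $\varphi$ factors through the quotient: there is a representation $\widehat\varphi$ of $C^*(f^*\Gamma)/I_z$ with $\widehat\varphi(a + I_z) = \varphi(a)$, and it is irreducible because its image equals $\varphi(C^*(f^*\Gamma))$, which acts irreducibly on $\Hh$. Composing with the isomorphism $\psi_z^{-1} : C^*(\Gamma) \to C^*(f^*\Gamma)/I_z$ of Lemma~\ref{quotient iso} then yields the irreducible representation $\widehat\varphi \circ \psi_z^{-1}$ of $C^*(\Gamma)$, completing the proof.
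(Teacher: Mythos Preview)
Your proposal is correct and follows essentially the same route as the paper: nondegeneracy gives the strict extension, Schur's lemma applied to the central unitaries $V_h$ yields the character $\gamma_\varphi$, and the identity relating $V_h$ to the generators shows $I_z\subseteq\ker\varphi$. The only cosmetic difference is that the paper checks $\varphi(z^h s_{(v,0)}-u_{v,h})=0$ via $u_{v,h}=V_h s_{(v,0)}$, whereas you verify the equivalent identity $s_{(\lambda,n_1)}=V_{n_1-n_2}s_{(\lambda,n_2)}$ directly on the defining generators of $I_z$; both collapse to the same centrality computation from Proposition~\ref{central unitaries}.
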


\begin{proof}
The irreducible representation $\varphi: C^*(f^*\Gamma)\to B(\Hh)$ is extendible since it is non-degenerate.
Since $\varphi$ is irreducible, $\widetilde{\varphi}$ is also.
Since the $V_h$ are central in $\Mm C^*(f^*\Gamma)$, the $\widetilde{\varphi}(V_h)$ are central in $\widetilde{\varphi}(\Mm C^*(f^*\Gamma))$.
Therefore the $\widetilde{\varphi}(V_h)$ are all scalar multiples of identity operator $1_{\Hh}$ by, for example, \cite[Theorem~4.1.12]{Murphy}.
That is, for each $h\in H$, there exists $\gamma_\varphi(h)\in \TT$ such that $\widetilde{\varphi}(V_h)=\gamma_\varphi(h)1_{\Hh}$.
Since $\widetilde{\varphi}$ is multiplicative, $\gamma_\varphi:H\to \TT$ is a homomorphism, so $\gamma_\varphi\in \widehat{H}$.
The uniqueness of $\gamma_\varphi$ is obvious.

Fix $z\in \TT^k$ such that $z^h=\gamma_\phi(h)$ for all $h\in H$. For $h\in H$, we have
\[
\varphi(z^h s_{(v,0)}-u_{v,h})=\varphi((z^h V_0-V_h)s_{(v,0)})=\widetilde{\varphi}(z^h V_0-V_h)\varphi(s_{(v,0)})=0.
\]
So $\varphi$ descends to a representation $\widehat{\varphi}$ of $C^*(f^*\Gamma)/I_z$,
which is irreducible because $\varphi$ is.
\end{proof}

\begin{proof}[Proof of Theorem \ref{thm:bijection}]
Since the homomorphism $\psi_z :C^*(f^*\Gamma)/I_z \to C^*(\Gamma)$ of
Lemma~\ref{quotient iso} is an isomorphism, the composition $\psi_z \circ q_z$ is
surjective, and hence $\pi\circ \psi_z\circ q_z$ is an irreducible representation of
$C^*(f^*\Gamma)$.

To see that $(\gamma,\pi)\mapsto \pi\circ \psi_{z_\gamma}\circ q_{z_\gamma}$ is
surjective, fix an irreducible representation $\varphi$ of $C^*(f^*\Gamma)$. Let
$\gamma_{\varphi}$ be the corresponding character of $H$ given in
Lemma~\ref{representation}, and let $\psi_{z_{\gamma_\varphi}}:
C^*(f^*\Gamma)/I_{z_\varphi}\to C^*(\Gamma)$ be the isomorphism of Lemma~\ref{quotient
iso}. By Lemma~\ref{representation}, there is an irreducible representation
$\widehat{\varphi}$ of $C^*(f^*\Gamma)/I_{z_{\gamma_\varphi}}$ such that
$\widehat{\varphi}\circ q_{z_{\gamma_\varphi}}=\varphi$, and then $\widehat{\varphi}\circ
\psi^{-1}_{z_{\gamma_\varphi}}$ is an irreducible representation of $C^*(\Gamma)$. Now
\[
(\widehat{\varphi} \circ \psi^{-1}_{z_{\gamma_\varphi}}) \circ \psi_{z_{\gamma_\varphi}} \circ q_{z_{\gamma_\varphi}}
    = \widehat{\varphi} \circ q_{z_{\gamma_\varphi}}
    =\varphi,
\]
so $(\gamma,\pi)\mapsto \pi\circ \psi_{z_\gamma}\circ q_{z_\gamma}$ is surjective.

To see that it is injective, fix $\pi$ and $\gamma$. We claim that for $z\in\TT^k$, we have $I_z\subseteq \ker(\pi\circ\psi_{z_\gamma}\circ q_{z_\gamma})$ if and only if $z^h=\gamma(h)$ for all $h\in H$. First suppose that $I_z\subseteq \ker(\pi\circ\psi_{z_\gamma}\circ q_{z_\gamma})$. Fix $h\in H$, and let $\theta=\pi\circ\psi_{z_\gamma}\circ q_{z_\gamma}$. Then $\theta(u_{v,h})=z^h_\gamma \pi(s_v)$. Hence
\[
0=\theta(z^h s_{(v,0)}-u_{v,h})=(z^h-\gamma(h))\pi(s_v).
\]
Since $\pi$ is nonzero, there exists $v\in\Gamma^0$ such that $\pi(s_v)\ne 0$ forcing $z^h=\gamma(h)$ for all $h\in H$.
Now suppose that $z^h=\gamma(h)$ for all $h\in H$. Then every generator of $I_z$ belongs to $\ker \theta$, giving $I_z\subseteq\ker\theta$.

Fix  $(\gamma_1, \pi_1), (\gamma_2, \pi_2) \in \widehat{H} \times
\operatorname{Irr}(C^*(\Gamma))$, and write $\theta_i := \pi_i \circ \psi_{z_{\gamma_i}}
\circ q_{z_{\gamma_i}}$ for $i=1,2$. Suppose that $\theta_1 = \theta_2$. Then $I_z
\subseteq \ker(\theta_1)= \ker(\theta_2)$  for all $z \in \TT^k$. Thus $\gamma_1(h) =
\gamma_2(h)$ for all $h \in H$ by the preceding paragraph; that is $\gamma_1 = \gamma_2$.
This forces $\psi_{z_{\gamma_1}} \circ q_{z_{\gamma_1}} = \psi_{z_{\gamma_2}} \circ
q_{z_{\gamma_2}}$. Since these are surjections onto $C^*(\Gamma)$, the equality $\theta_1
= \theta_2$ forces $\pi_1 = \pi_2$.
\end{proof}

\section{Maximal tails, periodicity, and pullbacks}\label{sec:max tails}

Theorem 3.12 of \cite{KaP1} implies that the maximal tails in a strongly aperiodic $k$-graph
index the primitive gauge-invariant ideals in its $C^*$-algebra. These maximal tails are
also a key ingredient in our catalogue of the primitive ideals of an arbitrary $k$-graph
$C^*$-algebra. In this section we show that every maximal tail $T$ in a $k$-graph contains a
hereditary subset $H$ for which the subgraph $H\Lambda T$ is isomorphic to a pullback of
a $P$-graph as described in Section~\ref{sec:P-graphs}. This implies that for gauge-invariant ideals associated to maximal tails, the quotient
contains a corner whose primitive-ideal space is described by
Theorem~\ref{thm:bijection}. This in turn is the key to our main theorem in
Section~\ref{sec:all prim ideals}.

We recall the definition of a maximal tail from \cite{KaP1}.

\begin{dfn}[{\cite[Definition~3.10]{KaP1}}]
Let $\Lambda$ be a row-finite $k$-graph with no sources. A nonempty subset $T$ of
$\Lambda^0$ is called a \textit{maximal tail} if
\begin{enumerate}\renewcommand{\theenumi}{\alph{enumi}}
\item\label{it:MTa} for every $v_1,v_2\in T$ there is $w\in T$ such that $v_1\Lambda
    w\ne\emptyset$ and $v_2\Lambda w\ne\emptyset$,
\item\label{it:MTb} for every $v\in T$ and $1\le i\le k$ there exists $\lambda\in
    v\Lambda^{e_i}$ such that $s(\lambda)\in T$, and
\item\label{it:MTc} for $w\in T$ and $v\in\Lambda^0$ with $v\Lambda w\ne\emptyset$ we
    have $v\in T$.
\end{enumerate}
\end{dfn}

In \cite{DavidsonYang:CJM09}, Davidson and Yang comprehensively analyse aperiodicity for
single-vertex $2$-graphs. The following results substantially generalise this analysis,
but the fundamental idea behind them is due to Davidson-Yang.

Let $\Lambda$ be a row-finite $k$-graph with no sources such that $\Lambda^0$ is a
maximal tail. We define a relation on $\Lambda$ by
\[
\mu \sim \nu \quad\text{ if and only if }\quad s(\mu) = s(\nu)\text{ and }
    \mu x = \nu x\text{ for all $x \in s(\mu)\Lambda^\infty$.}
\]
This is an equivalence relation on $\Lambda$ which respects range, source and composition.
Thus $\Lambda/\negthickspace\sim$ is a category with respect to $r([\lambda]) = [r(\lambda)]$,
$s([\lambda]) = [s(\lambda)]$ and $[\lambda][\mu] = [\lambda\mu]$. Define $\Per(\Lambda)
\subseteq \ZZ^k$ by
\[
\Per(\Lambda) := \{d(\mu) - d(\nu) : \mu,\nu \in \Lambda\text{ and }\mu \sim \nu\},
\]
and define
\begin{equation}\label{eq:HperDef}
\begin{split}
H_{\Per} := \big\{v \in \Lambda^0 :\text{ for all } \lambda \in v\Lambda\text{ and }& m
\in \NN^k\text{ such that }
        d(\lambda) - m \in \Per(\Lambda), \\
    &\text{ there exists } \mu \in v\Lambda^m\text{ such that }\lambda \sim \mu\big\}.
\end{split}
\end{equation}

For the next result recall that if $\Lambda$ is a $k$-graph then a subset $H$ of
$\Lambda^0$ is \emph{hereditary} if $s(H\Lambda) \subseteq H$.

\begin{thm}\label{thm:pullback iso}
Let $\Lambda$ be a row-finite $k$-graph with no sources such that $\Lambda^0$ is a
maximal tail.
\begin{enumerate}
\item\label{it:Per group} The set $\Per(\Lambda)$ is a subgroup of $\ZZ^k$.
\item\label{it:Hper} The set $H_{\Per}$ is a nonempty hereditary subset of
    $\Lambda^0$, and for all $p,q \in \NN^k$ such that $p - q \in \Per(\Lambda)$ and all
    $x \in H_{\Per}\Lambda^\infty$, we have $\sigma^p(x) = \sigma^q(x)$.
\item\label{it:theta} If $r(\lambda) \in H_{\Per}$ and $d(\lambda) - m \in
    \Per(\Lambda)$, then there is a unique $\mu \in r(\lambda)\Lambda^m$ such that
    $\mu \sim \lambda$; in particular, if $\lambda \sim \mu$ and $d(\lambda) =
    d(\mu)$, then $\lambda = \mu$.
\item\label{it:quotient P-graph} If $q_{\Per}: \ZZ^k\to \ZZ^k/\Per(\Lambda)$ is the quotient map, then the set $\Gamma := (H_{\Per}\Lambda)/\negthickspace\sim$ is a
    $q_{\Per}(\NN^k)$-graph with degree map $\widetilde{d} := q_{\Per} \circ d$.
\item\label{it:tail pullback iso} The assignment $\lambda \mapsto ([\lambda],
    d(\lambda))$ is an isomorphism of $H_{\Per}\Lambda$ onto the pullback $q_{\Per}^*
    \Gamma$.
\end{enumerate}
\end{thm}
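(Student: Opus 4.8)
The plan is to prove the five assertions in the stated order, since each feeds into the next. For~\eqref{it:Per group}, I would check that $\Per(\Lambda)$ is closed under negation (immediate from symmetry of $\sim$) and under addition: if $\mu_1 \sim \nu_1$ and $\mu_2 \sim \nu_2$, then after using maximal-tail axiom~\eqref{it:MTa} and~\eqref{it:MTb} to find a common extension I can arrange that $s(\mu_1) = r(\mu_2)$ and $s(\nu_1) = r(\nu_2)$, so that $\mu_1\mu_2 \sim \nu_1\nu_2$, and then $(d(\mu_1) - d(\nu_1)) + (d(\mu_2) - d(\nu_2)) = d(\mu_1\mu_2) - d(\nu_1\nu_2) \in \Per(\Lambda)$; that it contains $0$ is clear. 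One subtlety: I also want $\Per(\Lambda)$ to be a genuine subgroup and not merely a submonoid, so I should confirm that for any $\mu \sim \nu$ with $d(\mu) - d(\nu) = p - q$ (with $p,q \in \NN^k$), both $p-q$ and $q-p$ arise — which follows from symmetry of $\sim$. A cleaner route is to show that $\mu \sim \nu$ implies $d(\mu) - d(\nu)$ and its negative both lie in the set, since if $\mu\sim\nu$ then $\nu\sim\mu$.

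For~\eqref{it:Hper}, nonemptiness of $H_{\Per}$ requires the Davidson--Yang-style argument: using that $\Lambda^0$ is a maximal tail and that $\sim$ respects composition, I would show that some vertex (obtained by following paths far enough into the graph) lies in $H_{\Per}$; concretely, if $v \in \Lambda^0$ fails the defining condition of $H_{\Per}$ via some $\lambda \in v\Lambda$, then $s(\lambda)$ is "closer" to satisfying it, and an inductive/maximality argument over the finitely many coordinates of $\Per(\Lambda)$ terminates. That $H_{\Per}$ is hereditary follows because if $v \in H_{\Per}$ and $\mu \in v\Lambda$ with $s(\mu) = w$, then any $\lambda \in w\Lambda$ with $d(\lambda) - m \in \Per(\Lambda)$ can be prepended by $\mu$ to give $\mu\lambda \in v\Lambda$, and $d(\mu\lambda) - (d(\mu) + m) \in \Per(\Lambda)$, so there is $\mu' \sim \mu\lambda$ with $d(\mu') = d(\mu) + m$; the factorisation property then extracts the desired $\mu \sim$-equivalent factor of $\mu'$ over $w$. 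The statement about infinite paths ($\sigma^p(x) = \sigma^q(x)$ for $p - q \in \Per(\Lambda)$, $x \in H_{\Per}\Lambda^\infty$) then follows by approximating $x$ by its finite restrictions $x(0, n\cdot\mathbf{1})$ and applying the definition of $H_{\Per}$ to these, together with the definition of $\sim$.

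Assertion~\eqref{it:theta} is really the crux: existence of $\mu \in r(\lambda)\Lambda^m$ with $\mu \sim \lambda$ is exactly the defining property of $H_{\Per}$ (after swapping $m$ with $d(\lambda)$ as needed, using $d(\lambda) - m \in \Per(\Lambda) \iff m - d(\lambda) \in \Per(\Lambda)$), while \emph{uniqueness} is what makes the quotient category a $P$-graph. For uniqueness I would suppose $\mu, \mu' \in r(\lambda)\Lambda^m$ both satisfy $\mu \sim \lambda \sim \mu'$; then $\mu \sim \mu'$ with $d(\mu) = d(\mu')$, so for every $x \in s(\mu)\Lambda^\infty$ we have $\mu x = \mu' x$, and reading off the first $m$ coordinates of these infinite paths gives $\mu = \mu'$. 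This also yields the parenthetical claim. I expect this uniqueness step — and the nonemptiness of $H_{\Per}$ in~\eqref{it:Hper} — to be the main obstacle, since everything else is bookkeeping with the factorisation property.

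Given~\eqref{it:theta}, parts~\eqref{it:quotient P-graph} and~\eqref{it:tail pullback iso} are largely formal. For~\eqref{it:quotient P-graph} I would verify that $\widetilde d = q_{\Per} \circ d$ is a well-defined functor on $\Gamma = (H_{\Per}\Lambda)/\negthickspace\sim$ (well-definedness because $\mu \sim \nu \implies d(\mu) - d(\nu) \in \Per(\Lambda) = \ker q_{\Per}$) and check the factorisation property: given $[\lambda]$ with $\widetilde d([\lambda]) = \bar p + \bar q$ in $q_{\Per}(\NN^k)$, lift $\bar p, \bar q$ to $p, q \in \NN^k$; since $q_{\Per}(d(\lambda)) = q_{\Per}(p + q)$ we have $d(\lambda) - (p+q) \in \Per(\Lambda)$, so by~\eqref{it:theta} there is a unique $\mu \sim \lambda$ with $d(\mu) = p + q$, and factoring $\mu = \eta\zeta$ with $d(\eta) = p$, $d(\zeta) = q$ gives the factorisation $[\lambda] = [\eta][\zeta]$ in $\Gamma$; uniqueness of this factorisation at the level of $\sim$-classes again uses~\eqref{it:theta}. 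Finally, for~\eqref{it:tail pullback iso}, the map $\lambda \mapsto ([\lambda], d(\lambda))$ lands in $q_{\Per}^*\Gamma$ because $\widetilde d([\lambda]) = q_{\Per}(d(\lambda))$; it is a functor compatible with $r, s, d$ and composition by construction; it is surjective since any $([\lambda], n) \in q_{\Per}^*\Gamma$ has $q_{\Per}(d(\lambda)) = q_{\Per}(n)$, so $d(\lambda) - n \in \Per(\Lambda)$ and~\eqref{it:theta} supplies $\mu \sim \lambda$ with $d(\mu) = n$, whence $\mu \mapsto ([\lambda], n)$; and it is injective because $([\mu], d(\mu)) = ([\nu], d(\nu))$ means $\mu \sim \nu$ and $d(\mu) = d(\nu)$, so $\mu = \nu$ by the last clause of~\eqref{it:theta}. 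I would close by noting that an isomorphism of $k$-graphs is exactly a bijective degree-preserving functor, so these checks suffice.
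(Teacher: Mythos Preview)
Your plan for parts~(\ref{it:theta}) and~(\ref{it:tail pullback iso}) matches the paper, and your route through~(\ref{it:theta}) for the factorisation property in~(\ref{it:quotient P-graph}) also works (the paper instead invokes the shift relation from~(\ref{it:Hper}), but either approach is fine once the details are written out). There are, however, two genuine gaps.

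For~(\ref{it:Per group}), the step ``arrange that $s(\mu_1) = r(\mu_2)$'' does not go through by maximal-tail extensions alone. Extending $\mu_1,\nu_1$ on the right moves their common source, and extending $\mu_2,\nu_2$ on either side leaves $r(\mu_2)$ fixed or moves it upward, not downward to a prescribed vertex; the maximal-tail axiom only produces common \emph{lower} bounds. The paper avoids concatenation entirely: from $\mu\sim\nu$ it first extracts the shift identity $\sigma^{d(\mu)}(y)=\sigma^{d(\nu)}(y)$ for all $y$ based at any vertex below $s(\mu)$, combines two such identities at a common vertex $v$ to get $\sigma^{d(\mu)+d(\eta)}(y)=\sigma^{d(\nu)+d(\zeta)}(y)$, and only then reads off a $\sim$-pair with degree difference $m+n$ by factoring a finite segment of some $y\in v\Lambda^\infty$.

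The more serious gap is nonemptiness of $H_{\Per}$ in~(\ref{it:Hper}). Your sketch (``if $v$ fails via $\lambda$ then $s(\lambda)$ is closer, and induct over the finitely many coordinates of $\Per(\Lambda)$'') has no well-founded order and no clear termination: a vertex may fail for infinitely many pairs $(\lambda,m)$, and passing to $s(\lambda)$ need not reduce any obvious quantity. The paper's argument is substantially more involved. It introduces the auxiliary monoids $\Sigma_v=\{(p,q):\sigma^p(x)=\sigma^q(x)\text{ for all }x\in v\Lambda^\infty\}$ and $\Sigma_\Lambda=\bigcup_v\Sigma_v$, proves via Dickson's theorem that $\Sigma_\Lambda$ is generated by a \emph{finite} set $\Sigma_\Lambda^{\min}$ of minimal elements, uses the maximal-tail axiom finitely many times to locate a vertex $w$ with $\Sigma_w=\Sigma_\Lambda$, and then shows that $s(\eta)\in H_{\Per}$ for any $\eta\in w\Lambda^N$ with $N=\bigvee\{p\vee q:(p,q)\in\Sigma_\Lambda^{\min}\}$, by an induction on the length of a decomposition of $(d(\lambda),m)$ into elements of $\Sigma_\Lambda^{\min}$. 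The finite generation of $\Sigma_\Lambda$ (not of $\Per(\Lambda)$ as a group) is the crux, and your plan does not supply it.
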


\begin{proof}[Proof of Theorem~\ref{thm:pullback iso}(\ref{it:Per group})]
Since $\lambda \sim \lambda$ for all $\lambda \in \Lambda$, we have $0 \in
\Per(\Lambda)$. That $\sim$ is symmetric shows that $\Per(\Lambda)$ is closed under
inverses in $\ZZ^k$. To see that it is closed under addition, suppose that $m,n \in
\Per(\Lambda)$, and fix $\mu \sim \nu$ and $\eta \sim \zeta$ such that $d(\mu) - d(\nu) =
m$ and $d(\eta) - d(\zeta) = n$. Since $\Lambda^0$ is a maximal tail, there exist $\alpha
\in s(\mu)\Lambda$ and $\beta \in s(\zeta)\Lambda$ such that $s(\alpha) = s(\beta) = v$.
We have $\mu\alpha y = \nu\alpha y$ for all $y \in v\Lambda^\infty$. Hence
\[
\sigma^{d(\mu)}(y)
    = \sigma^{d(\mu) + d(\nu\alpha)}(\nu\alpha y)
    = \sigma^{d(\nu) + d(\mu\alpha)}(\mu\alpha y)
    =\sigma^{d(\nu)}(y)
\]
for all $y \in v\Lambda^\infty$; and similarly, $\sigma^{d(\eta)}(y) =
\sigma^{d(\zeta)}(y)$ for all $y \in v\Lambda^\infty$. Now
\begin{align*}
\sigma^{d(\mu) + d(\eta)}(y)
    = \sigma^{d(\mu)}(\sigma^{d(\eta)}(y))
    &= \sigma^{d(\mu)}(\sigma^{d(\zeta)}(y))\\
    &= \sigma^{d(\zeta)}(\sigma^{d(\mu)}(y))
    = \sigma^{d(\zeta)}(\sigma^{d(\nu)}(y))
    = \sigma^{d(\nu) + d(\zeta)}(y)
\end{align*}
for all $y \in v\Lambda^\infty$. Fix $\xi \in v\Lambda^{d(\mu) + d(\eta)}$ and $\xi' \in
s(\xi)\Lambda^{d(\nu) + d(\zeta)}$. Factorise $\xi\xi' = \tau\tau'$ with $d(\tau) =
d(\xi')$ and $d(\tau') = d(\xi)$. For $z \in s(\xi')\Lambda^\infty$, we have $\xi\xi' z
\in v\Lambda^\infty$, and hence
\[
\xi' z
    = \sigma^{d(\xi)}(\xi\xi'z)
    = \sigma^{d(\mu) + d(\eta)}(\xi\xi'z)
    = \sigma^{d(\nu) + d(\zeta)}(\tau\tau'z)
    = \tau' z.
\]
It follows that $\xi' \sim \tau'$ and hence that $m + n = d(\tau') - d(\xi')$ belongs to
$\Per(\Lambda)$.
\end{proof}

To prove Theorem~\ref{thm:pullback iso}(\ref{it:Hper}), we need some technical results.

\begin{lem}\label{lem:ordering and addition}
Let $\Lambda$ be a row-finite $k$-graph with no sources such that $\Lambda^0$ is a
maximal tail. For $v \in \Lambda^0$, let $\Sigma_v := \{(p,q) \in \NN^k \times \NN^k :
\sigma^p(x) = \sigma^q(x)\text{ for all } x \in v\Lambda^\infty\}$.
\begin{enumerate}
\item\label{it:ordering} For each $\lambda \in \Lambda$, we have $\Sigma_{r(\lambda)}
    \subseteq \Sigma_{s(\lambda)}$.
\item\label{it:equiv rel} The relation $\Sigma_v$ is an equivalence relation on
    $\NN^k$ for each $v \in \Lambda^0$.
\item\label{it:addition} The set $\Sigma_v$ is a sub-monoid of $\NN^k \times \NN^k$
    for each $v \in \Lambda^0$.
\end{enumerate}
\end{lem}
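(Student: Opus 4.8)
The plan is to verify each of the three assertions about $\Sigma_v$ directly from the definition, using the factorisation property and the fact that $\Lambda^0$ is a maximal tail (which guarantees the existence of the infinite paths needed to make the conditions meaningful).

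First I would prove (\ref{it:ordering}). If $(p,q) \in \Sigma_{r(\lambda)}$ and $x \in s(\lambda)\Lambda^\infty$, then $\lambda x \in r(\lambda)\Lambda^\infty$, so $\sigma^p(\lambda x) = \sigma^q(\lambda x)$; since $\sigma^{d(\lambda)}(\lambda x) = x$ and shifts commute, applying $\sigma^{d(\lambda)}$ to both sides gives $\sigma^p(x) = \sigma^q(x)$. Here I should note that $s(\lambda)\Lambda^\infty$ is nonempty by Lemma~\ref{lem:construct path} (applied to $\Lambda$ as an $\NN^k$-graph), so the inclusion is not vacuous for a trivial reason, although in fact the argument works pointwise regardless. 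Next, for (\ref{it:equiv rel}), reflexivity and symmetry are immediate from the symmetry of equality. For transitivity, if $(p,q),(q,r) \in \Sigma_v$ then for every $x \in v\Lambda^\infty$ we have $\sigma^p(x) = \sigma^q(x) = \sigma^r(x)$, so $(p,r) \in \Sigma_v$.

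For (\ref{it:addition}), the point is that $(0,0) \in \Sigma_v$ trivially, and that if $(p,q),(m,n) \in \Sigma_v$ then $(p+m, q+n) \in \Sigma_v$. To see the latter, fix $x \in v\Lambda^\infty$. Then $\sigma^{p+m}(x) = \sigma^p(\sigma^m(x))$; I want to replace the inner $\sigma^m(x)$ by $\sigma^n(x)$ and the outer $\sigma^p$-index by $\sigma^q$. The cleanest route is the chain
\[
\sigma^{p+m}(x) = \sigma^m(\sigma^p(x)) = \sigma^m(\sigma^q(x)) = \sigma^q(\sigma^m(x)) = \sigma^q(\sigma^n(x)) = \sigma^{q+n}(x),
\]
where the first and third equalities use that shifts commute, the second uses $(p,q) \in \Sigma_v$, and the fourth uses $(m,n) \in \Sigma_v$ together with $\Sigma_v \subseteq \Sigma_{v}$ being preserved under moving along $\sigma^q(x)$ — more carefully, applying $(m,n) \in \Sigma_{r(y)}$ where $y$ is the path with $\sigma^q(x) = \sigma^q(x)$; but since the condition in $\Sigma_v$ quantifies over \emph{all} $x \in v\Lambda^\infty$ and $\sigma^q(x) \in s(\lambda)\Lambda^\infty$ for the appropriate initial segment $\lambda$ of $x$ of degree $q$, we can invoke part~(\ref{it:ordering}) to get $(m,n) \in \Sigma_{s(\lambda)}$ and hence $\sigma^m(\sigma^q(x)) = \sigma^n(\sigma^q(x))$. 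This is the only step that is not completely formal, so it is where I would be most careful.

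I do not anticipate a serious obstacle here: all three parts are bookkeeping with shift maps, and the one subtlety — justifying that $(m,n) \in \Sigma_v$ continues to apply after shifting by $q$ — is handled by part~(\ref{it:ordering}), so the parts should be proved in the order (\ref{it:ordering}), (\ref{it:equiv rel}), (\ref{it:addition}). The main thing to get right is simply that $\sigma^p \circ \sigma^q = \sigma^{p+q} = \sigma^q \circ \sigma^p$ as maps on $\Lambda^\infty$, which is immediate from the definition $\sigma^p(x)(m,n) = x(p+m, p+n)$.
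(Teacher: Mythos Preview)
Your proposal is correct and matches the paper's approach: part~(\ref{it:ordering}) by prepending $\lambda$ and shifting back by $d(\lambda)$, part~(\ref{it:equiv rel}) as routine, and part~(\ref{it:addition}) by commuting shifts and invoking part~(\ref{it:ordering}) to pass the hypothesis to the shifted path. As a small aside, your displayed chain for~(\ref{it:addition}) does not actually need part~(\ref{it:ordering}) at the fourth step---since $x \in v\Lambda^\infty$, the equality $\sigma^m(x)=\sigma^n(x)$ follows directly from $(m,n)\in\Sigma_v$---whereas the paper's slightly different ordering (it applies $(p',q')$ first and then needs $(p,q)\in\Sigma_{r(\sigma^{q'}(x))}$) does genuinely require it.
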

\begin{proof}
(\ref{it:ordering}) Fix $\lambda \in \Lambda$ and $(p,q) \in \Sigma_{r(\lambda)}$. For $x
\in s(\lambda)\Lambda^\infty$, we have
\[
\sigma^p(x)
    = \sigma^{d(\lambda)}(\sigma^p(\lambda x))
    = \sigma^{d(\lambda)}(\sigma^q(\lambda x))
    = \sigma^q(x),
\]
so $(p,q) \in \Sigma_{s(\lambda)}$.

(\ref{it:equiv rel}) It is routine to check that $\Sigma_v$ is reflexive, symmetric and transitive.

(\ref{it:addition}) Fix $v \in \Lambda^0$. Part~(\ref{it:equiv rel}) implies that $(0,0)
\in \Sigma_v$. Suppose that $(p,q), (p',q') \in \Sigma_v$, and fix $x \in
v\Lambda^\infty$. Then
\[
\sigma^{p+p'}(x)
    = \sigma^p(\sigma^{p'}(x))
    = \sigma^p(\sigma^{q'}(x)).
\]
Part~(\ref{it:ordering}) implies that $(p,q) \in \Sigma_{s(x(0,q'))} =
\Sigma_{r(\sigma^{q'}(x))}$, and hence $\sigma^p(\sigma^{q'}(x))=
\sigma^q(\sigma^{q'}(x)) = \sigma^{q + q'}(x)$. Hence $(p+p', q+q') \in \Sigma_v$.
\end{proof}

\begin{prop}\label{prp:finitely generated}
Let $\Lambda$ be a row-finite $k$-graph with no sources such that $\Lambda^0$ is a
maximal tail. Let $\Sigma_\Lambda = \bigcup_{v \in \Lambda^0} \Sigma_v$. Then
$\Sigma_\Lambda$ is an equivalence relation on $\NN^k$ and a submonoid of $\NN^k \times
\NN^k$. The set $\Sigma^{\min}_\Lambda$ of minimal elements of $\Sigma_\Lambda \setminus
\{0\}$ (under the usual ordering on $\NN^k \times \NN^k$) is finite and generates
$\Sigma_\Lambda$ as
a monoid; and $(\Sigma_\Lambda - \Sigma_\Lambda) \cap (\NN^k \times \NN^k) =
\Sigma_\Lambda$.
\end{prop}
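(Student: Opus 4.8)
The plan is to establish the four assertions of Proposition~\ref{prp:finitely generated} more or less in the order stated, leaning on Lemma~\ref{lem:ordering and addition} for the monoid/equivalence-relation structure and on a Dickson's-lemma argument for finiteness.

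First I would check that $\Sigma_\Lambda$ is an equivalence relation. Reflexivity and symmetry are immediate from Lemma~\ref{lem:ordering and addition}(\ref{it:equiv rel}) applied vertexwise. For transitivity, suppose $(p,q) \in \Sigma_v$ and $(q,r) \in \Sigma_w$; using condition~(\ref{it:MTa}) of the maximal tail, choose $\alpha \in v\Lambda$ and $\beta \in w\Lambda$ with $s(\alpha) = s(\beta) =: u$, and then Lemma~\ref{lem:ordering and addition}(\ref{it:ordering}) gives $(p,q), (q,r) \in \Sigma_u$, so transitivity within $\Sigma_u$ finishes it. The same ``common source'' trick shows $\Sigma_\Lambda$ is a submonoid of $\NN^k \times \NN^k$: given $(p,q) \in \Sigma_v$ and $(p',q') \in \Sigma_{v'}$, pull both into a single $\Sigma_u$ and apply Lemma~\ref{lem:ordering and addition}(\ref{it:addition}).

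Next, finiteness of $\Sigma^{\min}_\Lambda$ and generation. Since $\NN^k \times \NN^k \cong \NN^{2k}$ is well-quasi-ordered (Dickson's lemma), the set of minimal elements of any subset is finite; apply this to $\Sigma_\Lambda \setminus \{0\}$. That these minimal elements generate $\Sigma_\Lambda$ as a monoid is the standard induction on the order: any nonzero $(p,q) \in \Sigma_\Lambda$ dominates some minimal element $(p_0, q_0)$, and I would want to subtract it off, i.e. conclude $(p - p_0, q - q_0) \in \Sigma_\Lambda$ and induct. This subtraction step is exactly where the last assertion $(\Sigma_\Lambda - \Sigma_\Lambda) \cap (\NN^k \times \NN^k) = \Sigma_\Lambda$ does the work, so I would prove that identity first. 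The inclusion $\supseteq$ is trivial. For $\subseteq$: suppose $(a,b) = (p,q) - (p',q')$ with $(p,q), (p',q') \in \Sigma_\Lambda$ and $(a,b) \in \NN^k \times \NN^k$, so $p = a + p'$ and $q = b + q'$ with $p + q' = q + p'$. Pull $(p,q)$ and $(p',q')$ into a common $\Sigma_v$ via the maximal-tail trick. Then for $x \in v\Lambda^\infty$ we have $\sigma^{a}(\sigma^{p'}(x)) = \sigma^{p}(x) = \sigma^{q}(x) = \sigma^{b}(\sigma^{q'}(x)) = \sigma^{b}(\sigma^{p'}(x))$, where the last equality uses $(p', q') \in \Sigma_v$ and that $\sigma^{q'}(x) = \sigma^{p'}(x)$; hence $(a,b) \in \Sigma_{s(x(0,p'))}$, wait---more carefully, $(a,b)$ annihilates $\sigma^{p'}(x)$ for every $x \in v\Lambda^\infty$, and as $x$ ranges over $v\Lambda^\infty$ the paths $\sigma^{p'}(x)$ range over all of $u\Lambda^\infty$ for the various $u = s(x(0,p'))$; since $\Lambda$ has no sources and $\Lambda^0$ is a maximal tail, at least one such $u$ occurs and $(a,b) \in \Sigma_u \subseteq \Sigma_\Lambda$. (Actually it suffices that $(a,b) \in \Sigma_u$ for a single vertex $u$, which we get.) With the identity in hand, $(p - p_0, q - q_0) \in (\Sigma_\Lambda - \Sigma_\Lambda) \cap (\NN^k \times \NN^k) = \Sigma_\Lambda$, and the induction closes.

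The main obstacle I anticipate is the bookkeeping in the subtraction/cancellation step: one must be careful that after pulling $(p,q)$ and $(p',q')$ into a common $\Sigma_v$ via condition~(\ref{it:MTa}), the coordinatewise-domination hypotheses $p \ge p'$, $q \ge q'$ are genuinely used to identify $\sigma^{a}$ and $\sigma^{b}$ as legitimate shift maps, and that one correctly tracks which vertex the resulting pair $(a,b)$ lies over. Everything else---the equivalence-relation and submonoid claims, and the Dickson's-lemma finiteness---is routine once the common-source reduction is set up.
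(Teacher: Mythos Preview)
Your proposal is correct and follows essentially the same approach as the paper: the maximal-tail common-source reduction for the equivalence-relation and submonoid claims, the group-intersection identity proved via a shift computation at a vertex below the common $v$, Dickson's lemma for finiteness, and the identity to enable the generating-set induction. Your subtraction computation differs only cosmetically from the paper's (the paper first forms $(p+q',\,q+p') \in \Sigma_v$ via symmetry and addition and then descends along a path of degree $p'+q'$, whereas you descend along a path of degree $p'$ and use $(p',q') \in \Sigma_v$ directly to swap $\sigma^{q'}$ for $\sigma^{p'}$), but the logic and overall structure are the same.
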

\begin{proof}
That $\Sigma_\Lambda$ is reflexive and symmetric follows from the same properties of the
$\Sigma_v$. If $(p,q), (q,r) \in \Sigma_\Lambda$ then there exist $v,w$ such that $(p,q)
\in \Sigma_v$ and $(q,r) \in \Sigma_w$. Since $\Lambda^0$ is a maximal tail, there exists
$u \in \Lambda^0$ such that $v\Lambda u$ and $w \Lambda u$ are nonempty.
Lemma~\ref{lem:ordering and addition}(\ref{it:ordering}) then implies that $(p,q), (q,r)
\in \Sigma_u$. Lemma~\ref{lem:ordering and addition}(\ref{it:equiv rel}) therefore
implies that $(p,r) \in \Sigma_u \subseteq \Sigma_\Lambda$.

We show that $\Sigma_\Lambda$ is a submonoid of $\NN^k \times \NN^k$. That each
$\Sigma_v$ is a monoid  gives $(0,0) \in \Sigma_\Lambda$, so it suffices to show that
$\Sigma_\Lambda$ is closed under addition. Suppose that $(p,q)$ and $(r,s)$ belong to
$\Sigma_\Lambda$. As above, there exists $u \in \Lambda^0$ such that $(p,q), (r,s) \in
\Sigma_u$. Lemma~\ref{lem:ordering and addition}(\ref{it:addition}) then implies that
$(p,q) + (r,s) \in \Sigma_u \subseteq \Sigma_\Lambda$.

Let $G := \{p-q : p,q \in \Sigma_\Lambda\ \subseteq \ZZ^k \times \ZZ^k$. We must show
that $G
\cap (\NN^k \times \NN^k) = \Sigma_\Lambda$. Since $\Sigma_\Lambda$ is a monoid, the
containment $\supseteq$ is clear. For the reverse containment, suppose that $(p,q), (r,s)
\in \Sigma_\Lambda$ and that $p-r$ and $q-s$ belong to $\NN^k$; we must show that
$(p,q)-(r,s) \in \Sigma_\Lambda$. As above, there exists $u \in \Lambda^0$ such that
$(p,q)$ and $(r,s)$ both belong to $\Sigma_u$. Fix $\lambda \in u\Lambda^{r+s}$, and let
$w  = s(\lambda)$. Lemma~\ref{lem:ordering and addition}(\ref{it:equiv rel}) implies that
$(s,r)$ belongs to $\Sigma_u$ and then that $(p+s, q+r) \in \Sigma_u$. So for $x \in
w\Lambda^\infty$, we have
\[
\sigma^{p-r}(x)
    = \sigma^{p-r}(\sigma^{r+s}(\lambda x))
    = \sigma^{p+s}(\lambda x)
    = \sigma^{q+r}(\lambda x)
    = \sigma^{q-s}(\sigma^{r+s}(\lambda x))
    = \sigma^{q-s}(x).
\]
Thus $(p,q)-(r,s) \in \Sigma_w \subseteq \Sigma_\Lambda$.

The set $\Sigma^{\min}_\Lambda$ is completely unarranged in the sense that no two
distinct elements of $\Sigma^{\min}_\Lambda$ are comparable under $\le$. Hence
$\Sigma^{\min}_\Lambda$ is finite by Dickson's Theorem (see
\cite[Theorem~5.1]{RosalesSanchez:monoids}). An induction (see, for example,
\cite[Proposition~8.1]{RosalesSanchez:monoids}) shows that it generates $\Sigma_\Lambda$
as a monoid.
\end{proof}

\begin{lem}\label{lem:Sigma translation}
Let $\Lambda$ be a row-finite $k$-graph with no sources and suppose that $\Lambda^0$ is a
maximal tail. Suppose that $p,q\in\NN^k$ and $n \in \ZZ^k$ satisfy $(p-n, q-n) \in
\Sigma_\Lambda$. Then $(p, q) \in \Sigma_\Lambda$.
\end{lem}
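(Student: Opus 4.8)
The plan is to deduce this directly from the two structural properties of $\Sigma_\Lambda$ established in Proposition~\ref{prp:finitely generated}: that $\Sigma_\Lambda$ is a submonoid of $\NN^k \times \NN^k$, and that it is saturated in the sense that $(\Sigma_\Lambda - \Sigma_\Lambda) \cap (\NN^k \times \NN^k) = \Sigma_\Lambda$. Note first that $(p - n, q - n) \in \Sigma_\Lambda \subseteq \NN^k \times \NN^k$, so in particular $p - n$ and $q - n$ lie in $\NN^k$, while $p, q \in \NN^k$ by hypothesis.

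If $n$ itself lay in $\NN^k$, the result would be immediate: $\Sigma_\Lambda$ is reflexive, so $(n,n) \in \Sigma_\Lambda$, and adding this to $(p-n, q-n)$ inside the monoid $\Sigma_\Lambda$ produces $(p,q)$. The only subtlety is that $n$ may have negative coordinates. To handle this, I would split $n = n_+ - n_-$ with $n_+, n_- \in \NN^k$ and $n_+ \wedge n_- = 0$, as in the notation introduced before Lemma~\ref{prop1}. Then $(n_+, n_+) \in \Sigma_\Lambda$ by reflexivity, and since $\Sigma_\Lambda$ is closed under addition, $(p - n, q - n) + (n_+, n_+) \in \Sigma_\Lambda$. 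Using $n_+ - n = n_-$, this element is $(p + n_-, q + n_-)$, so $(p + n_-, q + n_-) \in \Sigma_\Lambda$.

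To finish, I would subtract $(n_-, n_-)$: this element also lies in $\Sigma_\Lambda$ by reflexivity, so $(p,q) = (p + n_-, q + n_-) - (n_-, n_-)$ lies in $\Sigma_\Lambda - \Sigma_\Lambda$; and $(p,q) \in \NN^k \times \NN^k$ since $p, q \in \NN^k$. The saturation property from Proposition~\ref{prp:finitely generated} then gives $(p,q) \in (\Sigma_\Lambda - \Sigma_\Lambda) \cap (\NN^k \times \NN^k) = \Sigma_\Lambda$, as required. I do not expect a genuine obstacle here beyond unwinding the notation: the substantive work, namely showing that $\Sigma_\Lambda$ is a saturated submonoid of $\NN^k \times \NN^k$, has already been carried out in Proposition~\ref{prp:finitely generated}.
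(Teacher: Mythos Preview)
Your argument is correct and is essentially the paper's own proof: both split $n = n_+ - n_-$, add $(n_+,n_+)$ to $(p-n,q-n)$ inside the monoid $\Sigma_\Lambda$, subtract $(n_-,n_-)$, and then invoke the saturation identity $(\Sigma_\Lambda - \Sigma_\Lambda)\cap(\NN^k\times\NN^k)=\Sigma_\Lambda$ from Proposition~\ref{prp:finitely generated}. The only difference is expository---the paper compresses these steps into a single displayed equation.
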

\begin{proof}
Let $n_+=n\vee 0$ and $n_-=(-n)\vee 0$. Then $n_+,n_-\in \NN^k$ and $n=n_+-n_-$. Putting
$p' := p-n$ and $q' := q-n$, we have
\[
(p, q)=((p',q')+(n_+,n_+))-(n_-,n_-)\in (\Sigma_\Lambda-\Sigma_\Lambda)\cap(\NN^k\times \NN^k).
\]
Proposition~\ref{prp:finitely generated} therefore implies that $(p,q) \in
\Sigma_\Lambda$.
\end{proof}

\begin{lem}\label{lem:Per vs Sigma}
Let $\Lambda$ be a row-finite $k$-graph with no sources such that $\Lambda^0$ is a
maximal tail. For $p,q \in \NN^k$, we have $(p,q) \in \Sigma_\Lambda$ if and only if $p -
q \in \Per(\Lambda)$.
\end{lem}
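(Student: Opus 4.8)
The plan is to prove the two implications separately, in each case passing between the pointwise shift relations $\Sigma_v$ and the equivalence relation $\sim$ by reading off finite initial segments of infinite paths.

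For the implication $p-q\in\Per(\Lambda)\implies(p,q)\in\Sigma_\Lambda$, I would write $p-q=d(\mu)-d(\nu)$ with $\mu\sim\nu$, so that $s(\mu)=s(\nu)$ and $\mu y=\nu y$ for every $y\in s(\mu)\Lambda^\infty$. Applying $\sigma^{d(\mu)+d(\nu)}$ to both sides of $\mu y=\nu y$ and simplifying via $\sigma^{d(\mu)}(\mu y)=y=\sigma^{d(\nu)}(\nu y)$ yields $\sigma^{d(\mu)}(y)=\sigma^{d(\nu)}(y)$ for all such $y$, that is, $(d(\mu),d(\nu))\in\Sigma_{s(\mu)}\subseteq\Sigma_\Lambda$. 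Since $p-q=d(\mu)-d(\nu)$, the element $n:=p-d(\mu)=q-d(\nu)$ lies in $\ZZ^k$, and $(p-n,q-n)=(d(\mu),d(\nu))\in\Sigma_\Lambda$, so Lemma~\ref{lem:Sigma translation} gives $(p,q)\in\Sigma_\Lambda$.

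For the converse I would suppose $(p,q)\in\Sigma_v$. The set $v\Lambda^\infty$ is nonempty since $\Lambda$ is row-finite with no sources (Lemma~\ref{lem:construct path}); fix $x\in v\Lambda^\infty$ and any $N\in\NN^k$ with $N\ge p\vee q$, and set $\mu:=x(q,N)$ and $\nu:=x(p,N)$, so that $s(\mu)=x(N)=s(\nu)$ and $d(\mu)-d(\nu)=(N-q)-(N-p)=p-q$. Given $z\in x(N)\Lambda^\infty$, the path $\eta:=x(0,N)z$ lies in $v\Lambda^\infty$, so $\sigma^p(\eta)=\sigma^q(\eta)$ by hypothesis; since $\eta(0,N)=x(0,N)$ and $\sigma^N(\eta)=z$, the factorisation property gives $\sigma^q(\eta)=x(q,N)z=\mu z$ and $\sigma^p(\eta)=x(p,N)z=\nu z$, whence $\mu z=\nu z$. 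As $z\in s(\mu)\Lambda^\infty$ was arbitrary, $\mu\sim\nu$, and therefore $p-q=d(\mu)-d(\nu)\in\Per(\Lambda)$.

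The one genuinely non-obvious point is the choice of $\mu$ and $\nu$ in the converse. The obvious candidates $x(0,p)$ and $x(0,q)$ need not be $\sim$-equivalent, since knowing that $\sigma^p$ and $\sigma^q$ agree on all of $v\Lambda^\infty$ controls only how these initial segments continue along shifts of $x$, not along an arbitrary element of $s(\mu)\Lambda^\infty$. Truncating the single path $x$ from the two positions $p$ and $q$ up to a common bound $N$ repairs this, because every $z\in x(N)\Lambda^\infty$ reappears as the tail $\sigma^N(\eta)$ of a path $\eta\in v\Lambda^\infty$ to which the hypothesis applies. Everything else --- expanding $\sigma^{d(\mu)+d(\nu)}$ in the first implication and the shift computations for $x(0,N)z$ in the second --- is a routine application of the factorisation property.
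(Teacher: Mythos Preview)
Your proof is correct and follows essentially the same route as the paper's. For the forward direction the paper carries out the identical shift computation and invokes Lemma~\ref{lem:Sigma translation}; for the converse the paper takes $N=p+q$, picks a finite path $\lambda\in v\Lambda^{p+q}$, and factorises it as $\mu\alpha=\nu\beta$ with $d(\mu)=d(\beta)=p$ and $d(\nu)=d(\alpha)=q$ to obtain $\alpha\sim\beta$, which is exactly your argument with $\mu=x(q,N)$ and $\nu=x(p,N)$ specialised to $N=p+q$.
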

\begin{proof}
Suppose that $p,q \in \NN^k$ satisfy $p - q \in \Per(\Lambda)$. Fix $\mu \sim \nu$ in
$\Lambda$ such that $p-q = d(\mu) - d(\nu)$. Let $p' = d(\mu)$ and $q' = d(\nu)$. Since
$\mu \sim \nu$, for $x \in s(\mu)\Lambda^\infty$ we have
\[
\sigma^{p'}(x) = \sigma^{p'+q'}(\nu x) = \sigma^{p'+q'}(\mu x) = \sigma^{q'}(x).
\]
So $(p',q') \in \Sigma_{s(\mu)} \subseteq \Sigma_\Lambda$. We have $p - q = p' - q'$, and
hence $p - p' = q - q'$. Thus $n := p - p' \in \ZZ^k$ and $p,q \in \NN^k$ satisfy $(p-n,
q-n) \in \Sigma_\Lambda$. Thus Lemma~\ref{lem:Sigma translation} implies that $(p,q) \in
\Sigma_\Lambda$.

Now suppose that $(p,q) \in \Sigma_\Lambda$. Fix $v$ such that $(p,q) \in \Sigma_v$. Fix
$\lambda \in v\Lambda^{p+q}$. Factorise $\lambda = \mu\alpha = \nu\beta$ with $d(\mu) =
d(\beta) = p$ and $d(\nu) = d(\alpha) = q$. For $x \in s(\alpha)\Lambda^\infty$, we have
\[
\alpha x
    = \sigma^p(\mu\alpha x)
    = \sigma^q(\nu\beta x)
    = \beta x.
\]
Hence $\alpha \sim \beta$ and so $p - q = d(\beta) - d(\alpha) \in \Per(\Lambda)$.
\end{proof}

\begin{proof}[Proof of Theorem~\ref{thm:pullback iso}(\ref{it:Hper})]
We begin by showing that $H_{\Per}$ is nonempty. By Proposition~\ref{prp:finitely
generated} the finite set $\Sigma^{\min}_\Lambda$ generates $\Sigma_\Lambda$ as a monoid.
For each $(p,q) \in \Sigma^{\min}_\Lambda$ there exists $w \in \Lambda^0$ such that $(p,
q) \in \Sigma_{w}$. Let $|\Sigma^{\min}_\Lambda|$ be the cardinality of
$\Sigma^{\min}_\Lambda$. By $|\Sigma^{\min}_\Lambda|$ applications of
condition~(\ref{it:MTa}) for the maximal tail $\Lambda^0$, there exists $w \in \Lambda^0$
such that $\Sigma^{\min}_\Lambda \subseteq \Sigma_w$. Since $\Sigma_w$ is a monoid and
$\Sigma_w\subseteq \Sigma_\Lambda$, it follows that $\Sigma_w = \Sigma_\Lambda$. Let $N
:= \bigvee\{p \vee q : (p,q) \in \Sigma^{\min}_\Lambda\}$, and fix $\eta \in w\Lambda^N$.
We claim that $v := s(\eta)$ belongs to $H_{\Per}$. To see this, fix $\lambda \in
v\Lambda$ and $m \in \NN^k$ such that $d(\lambda) - m \in \Per(\Lambda)$.
Lemma~\ref{lem:Per vs Sigma} implies that $(d(\lambda), m) \in \Sigma_\Lambda$. So
Proposition~\ref{prp:finitely generated} implies that there exist elements $(p_1, q_1),
\dots, (p_l, q_l) \in \Sigma^{\min}_\Lambda$ such that
\[
(d(\lambda), m) = \sum^l_{i=1} (p_i, q_i).
\]
We will argue by induction that for $0 \le j \le l$ there exists $\lambda_j \in v\Lambda$
such that $\lambda_j \sim \lambda$ and $d(\lambda_j) = d(\lambda) + \sum^j_{i=1} (q_i -
p_i)$. Putting $\lambda_0 := \lambda$ establishes a base case. Now suppose that we have
$\lambda_{j-1}$ with the desired properties. Recall that $r(\lambda) = s(\eta)$ where
$\eta \in \Lambda^N$ and $\Sigma_{r(\eta)} = \Sigma_\Lambda$. Let $\mu := \eta(N - q_j,
N)$. By construction of $\lambda_{j-1}$ we have
\[\textstyle
d(\mu\lambda_{j-1}) = q_j + d(\lambda) + \sum^{j-1}_{i=1} (q_i - p_i) \ge p_j,
\]
so $\lambda_j := (\mu\lambda_{j-1})(p_j, d(\mu\lambda_{j-1}))$ satisfies
\[\textstyle
d(\lambda_j) = d(\mu\lambda_{j-1}) - p_j = d(\lambda) + \sum^j_{i=1} (q_i - p_i).
\]
Fix $x \in s(\lambda_j)\Lambda^\infty$. We have
\[
\lambda_j x
    = (\mu\lambda_{j-1})(p_j, d(\mu\lambda_{j-1})) x
    = \sigma^{p_j}(\mu\lambda_{j-1} x).
\]
Lemma~\ref{lem:ordering and addition}(\ref{it:ordering}) implies that $\Sigma_\Lambda =
\Sigma_{r(\eta)} \subseteq \Sigma_{r(\nu)} \subseteq \Sigma_\Lambda$, giving equality
throughout. Thus $(p_j, q_j) \in \Sigma_{r(\nu)}$, and so
\[
\sigma^{p_j}(\mu\lambda_{j-1} x)
    = \sigma^{q_j}(\mu\lambda_{j-1} x)
    = \lambda_{j-1} x.
\]
So $\lambda_j \sim \lambda_{j-1}$. Since $\lambda_{j-1} \sim \lambda$ by the inductive
hypothesis, $\lambda_j \sim \lambda$.

Now $\mu := \lambda_l$ satisfies $\mu \in v\Lambda^m$ and $\lambda \sim \mu$. Hence $v
\in H_{\Per}$ as claimed.

We show that $H_{\Per}$ is hereditary. Suppose that $r(\alpha) \in H_{\Per}$. Fix
$\lambda \in s(\alpha)\Lambda$ and $m \in \NN^k$ such that $d(\lambda) - m \in
\Per(\Lambda)$. Then $d(\alpha\lambda) - (m + d(\alpha)) \in \Per(\Lambda)$. Hence there
exists $\zeta \in r(\alpha)\Lambda$ such that $d(\zeta) = m + d(\alpha)$ and $\zeta \sim
\alpha\lambda$. Since $\zeta x = \alpha\lambda x$ for all $x\in
s(\lambda)\Lambda^\infty$, and since and $d(\zeta) > d(\alpha)$, we have $\zeta(0,
d(\alpha)) = \alpha$. Thus $\zeta = \alpha \mu$ for some $\mu \in s(\alpha)\Lambda^m$.
For $x \in s(\lambda)\Lambda$,
\[
\lambda x = \sigma^{d(\alpha)}(\alpha\lambda x)
    = \sigma^{d(\alpha)}(\zeta x)
    = \sigma^{d(\alpha)}(\alpha\mu x)
    = \mu x.
\]
So $\lambda \sim \mu$, whence $s(\alpha) \in H_{\Per}$.

To see that $\sigma^p(x) = \sigma^q(x)$ whenever $p-q \in \Per(\Lambda)$ and $x \in
H_{\Per}\Lambda^\infty$, fix such $p, q, x$. Let $\lambda = x(0,p)$. Then $x = \lambda
\sigma^p(x)$. Since $r(x) \in H_{\Per}$, there exists $\mu \in r(x)\Lambda^q$ such that
$\mu \sim \lambda$. We then have
\[
\sigma^p(x) = \sigma^q(\mu\sigma^p(x)) = \sigma^q(\lambda \sigma^p(x)) = \sigma^q(x).\qedhere
\]
\end{proof}

\begin{proof}[Proof of Theorem~\ref{thm:pullback iso}(\ref{it:theta})]
Fix $\lambda \in H_{\Per} \Lambda$ and $m \in \NN^k$ such that $d(\lambda) - m \in
\Per(\Lambda)$. Suppose that $\mu,\nu \in\Lambda^m$ satisfy $\mu \sim \lambda \sim \nu$.
Since $\sim$ is an equivalence relation, we have $\mu \sim \nu$. So it suffices to show
that if $\mu \sim \nu$ and $d(\mu) = d(\nu)$, then $\mu = \nu$. Fix $x \in
s(\mu)\Lambda^\infty$ and observe that $\mu \sim \nu$ implies $\mu x = \nu x$ and so $\mu
= (\mu x)(0,m) = (\nu x)(0,m) = \nu$.
\end{proof}

\begin{proof}[Proof of Theorem~\ref{thm:pullback iso}(\ref{it:quotient P-graph})]
We showed that $\Gamma$ is a category at the beginning of this section.
 Since $d$ is a functor, $\widetilde{d}:=q_{\Per}\circ d$ is
also a functor. We must show that $(\Gamma,\widetilde{d})$ has the unique factorisation
property.

Let $q=q_{\Per(\Lambda)}:\ZZ^k\to \ZZ^k/\Per(\Lambda)$. Fix $\lambda\in\Lambda$ with
$r(\lambda)\in H_{\Per}$ and $m,n\in\NN^k$ such that
$\widetilde{d}([\lambda])=q(m)+q(n)$. We must show that there is a unique pair
$\alpha,\beta\in\Gamma$ such that $\widetilde{d}(\alpha)=q(m)$,
$\widetilde{d}(\beta)=q(n)$ and $\alpha\beta=[\lambda]$. We begin by showing that such a
pair exists. Since $d(\lambda) - (m+n) \in \Per(\Lambda)$ and $r(\lambda) \in H_{\Per}$,
there exists $\mu \in r(\lambda) \Lambda^{m+n}$ such that $\mu \sim \lambda$. Let $\alpha
:= [\mu(0,m)]$ and $\beta := [\mu(m, m+n)]$. Then $\alpha\beta = [\mu(0,m)\mu(m, m+n)] =
[\mu] = [\lambda]$ because $\mu \sim \lambda$; and $d(\alpha) = q(m)$ and $d(\beta) =
q(n)$ by construction.

For uniqueness, suppose that $\mu_1,\nu_1,\mu_2,\nu_2\in H_{\Per}\Lambda$, satisfy
$r(\nu_i) = s(\mu_i)$, $q(d(\mu_i)) = q(m)$ and $q(d(\nu_i)) = q(n)$. Suppose further
that $\mu_1\nu_1\sim \lambda\sim\mu_2\nu_2$. We must show that $\mu_1\sim\mu_2$ and
$\nu_1\sim\nu_2$. Fix $x \in s(\nu_1)\Lambda^\infty$. Then
\[
\nu_1 x
    = \sigma^{d(\mu_1)}(\mu_1\nu_1 x)
    = \sigma^{d(\mu_1)}(\mu_2\nu_2 x).
\]
We have $q(d(\mu_1)) = q(m) = q(d(\mu_2))$, and so $d(\mu_1) - d(\mu_2) \in
\Per(\Lambda)$. Hence Theorem~\ref{thm:pullback iso}(\ref{it:Hper}) implies that
$\sigma^{d(\mu_1)}(y)=\sigma^{d(\mu_2)}(y)$ for all $y\in r(\mu_2)\Lambda^\infty$, and in particular
\[
\sigma^{d(\mu_1)}(\mu_2\nu_2 x)
    = \sigma^{d(\mu_2)}(\mu_2\nu_2 x)
    = \nu_2 x.
\]
Thus $\nu_1 \sim \nu_2$. Since $d(\mu_1) - d(\mu_2) \in \Per(\Lambda)$ and since
$r(\mu_1) \in H_{\Per}$, there exists a unique $\zeta \in r(\mu_1)\Lambda^{d(\mu_2)}$
such that $\mu_1 \sim \zeta$. Fix $x \in s(\lambda)\Lambda^\infty$. We have
\[
\zeta = (\zeta\nu_1 x)(0, d(\mu_2))
    = (\mu_1 \nu_1 x)(0, d(\mu_2))
    = (\mu_2\nu_2 x)(0,d(\mu_2))
    = \mu_2.\qedhere
\]
\end{proof}

\begin{proof}[Proof of Theorem~\ref{thm:pullback iso}(\ref{it:tail pullback iso})]
It is routine to check that $\lambda\mapsto ([\lambda],d(\lambda))$ is a $k$-graph
morphism from $H_{\Per}\Lambda$ to $q_{\PER}^*(\Gamma)$. This $k$-graph morphism is
injective by part~(\ref{it:theta}). Fix $\xi\in H_{\Per}\Lambda$ and $m\in\NN^k$ such
that $m-d(\xi)\in\PER$. Then there exists $\eta \in r(\xi)\Lambda^m$ such that $\eta \sim
\xi$. Thus $([\eta],d(\eta))=([\xi],m)$. So $\lambda\mapsto ([\lambda],d(\lambda))$ is
surjective and hence an isomorphism.
\end{proof}

\section{The primitive ideals of a \texorpdfstring{$k$}{k}-graph algebra}\label{sec:all prim ideals}

In this section we prove our main theorem, giving a complete listing of the primitive
ideals in the $C^*$-algebra of a row-finite $k$-graph with no sources.

\begin{lem} \label{lem:corner}
Let $\Lambda$ be a row-finite $k$-graph such that $\Lambda^0$  is a maximal tail. Let $H
= H_{\Per}$ be the hereditary set~\eqref{eq:HperDef}, and let $\Gamma =
H\Lambda/\negthickspace\sim$. Let $q = q_{\Per} : \ZZ^k \to \ZZ^k/\Per(\Lambda)$ be the
quotient map, and let $P := q(\NN^k)$. Then
\begin{enumerate}
\item\label{it:exists homo} there is a unique $*$-homomorphism $\phi : C^*(q^*\Gamma)
    \to C^*(\Lambda)$ such that $\phi(s_{([\lambda],d(\lambda))})=s_\lambda$ for
    $\lambda\in H\Lambda$;
\item\label{it:corner iso} the series $\sum_{v \in H} s_v$ converges strictly to a
    projection $P_H$ in $\Mm C^*(\Lambda)$, and $\phi$ is an isomorphism of
    $C^*(q^*\Gamma)$ onto $P_H C^*(\Lambda) P_H$; and
\item\label{it:corner in ideal} the corner $P_H C^*(\Lambda) P_H$ is a full corner of
    the ideal $I_H = \clsp\{s_\lambda s^*_\mu : \lambda,\mu \in \Lambda H\}$.
\end{enumerate}
\end{lem}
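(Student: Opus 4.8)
The plan is to obtain $\phi$ by transporting the isomorphism $H\Lambda\cong q^*\Gamma$ of Theorem~\ref{thm:pullback iso}(\ref{it:tail pullback iso}) into $C^*(\Lambda)$, to identify its image with the corner $P_H C^*(\Lambda) P_H$ using the $P$-graph gauge-invariant uniqueness theorem, and to read off fullness from the observation that the vertex projections $s_v$ ($v\in H$) sit inside the corner and generate $I_H$. For~(\ref{it:exists homo}), note first that heredity of $H$ gives $v(H\Lambda)^p=v\Lambda^p$ for $v\in H$ and $p\in\NN^k$, and $s(\lambda)\in H$ for every $\lambda\in H\Lambda$; hence the Cuntz-Krieger relations of $C^*(\Lambda)$ restrict to show that $\{s_\lambda:\lambda\in H\Lambda\}$ is a Cuntz-Krieger $H\Lambda$-family in $C^*(\Lambda)$, and that $H\Lambda$ is itself a row-finite $k$-graph with no sources. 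The universal property of $C^*(H\Lambda)$ then yields a homomorphism carrying each $s_\lambda$ (for $\lambda\in H\Lambda$) to $s_\lambda$; precomposing with the isomorphism $C^*(q^*\Gamma)\cong C^*(H\Lambda)$ induced by Theorem~\ref{thm:pullback iso}(\ref{it:tail pullback iso}) produces $\phi$, and uniqueness is clear since the $s_{([\lambda],d(\lambda))}$ generate $C^*(q^*\Gamma)$.

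For~(\ref{it:corner iso}), the $s_v$ ($v\in H$) are mutually orthogonal projections, so the partial sums $\sum_{v\in F}s_v$ form an increasing net of projections; the usual $\varepsilon/3$ argument (using $s_v(s_\alpha s_\beta^*)=\delta_{v,r(\alpha)}s_\alpha s_\beta^*$, cf.\ \cite[Lemma~2.10]{Raeburn:Graphalgebras05}) shows this net converges strictly to a projection $P_H\in\Mm C^*(\Lambda)$. Each $s_v$ is fixed by the gauge action $\gamma$ of $\TT^k$ on $C^*(\Lambda)$ (given by $\gamma_z(s_\mu)=z^{d(\mu)}s_\mu$), so $P_H$ is $\gamma$-fixed and $\gamma$ restricts to an action of $\TT^k$ on the corner $P_H C^*(\Lambda) P_H$. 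Since $P_H s_\alpha=s_\alpha$ when $r(\alpha)\in H$ and $0$ otherwise (and likewise $s_\beta^* P_H=s_\beta^*$ when $r(\beta)\in H$), and since $r(\alpha)\in H$ forces $s(\alpha)\in H$ by heredity, one gets $P_H C^*(\Lambda) P_H=\clsp\{s_\alpha s_\beta^*:\alpha,\beta\in H\Lambda\}$. As $\phi(s_{([\alpha],d(\alpha))}s^*_{([\beta],d(\beta))})=s_\alpha s_\beta^*$, the image of $\phi$ is a closed subalgebra containing a dense subset of $P_H C^*(\Lambda) P_H$, hence equals it. For injectivity, apply Proposition~\ref{prp:Pgraph giut} to the row-finite $k$-graph $q^*\Gamma$ with no sources (isomorphic to $H\Lambda$): the relevant Cuntz-Krieger $q^*\Gamma$-family is $\{s_\lambda:\lambda\in H\Lambda\}\subseteq P_H C^*(\Lambda) P_H$, the induced homomorphism is $\phi$, the required action of the dual $\TT^k$ of the Grothendieck group $\ZZ^k$ of $\NN^k$ is the restricted gauge action (which indeed satisfies $\gamma_z(s_\lambda)=z^{d(\lambda)}s_\lambda$), and each $s_v\not=0$ in $C^*(\Lambda)$ by Lemma~\ref{lem:nonzero}. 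Proposition~\ref{prp:Pgraph giut} therefore gives that $\phi$ is injective, so $\phi$ is an isomorphism of $C^*(q^*\Gamma)$ onto $P_H C^*(\Lambda) P_H$.

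For~(\ref{it:corner in ideal}), a routine computation with the Cuntz-Krieger relations (using heredity of $H$) shows $I_H$ is an ideal of $C^*(\Lambda)$. Since $\alpha\in H\Lambda$ exactly when $r(\alpha)\in H$, in which case $s(\alpha)\in H$ as well, the description $P_H C^*(\Lambda) P_H=\clsp\{s_\alpha s_\beta^*:\alpha,\beta\in H\Lambda\}$ from the previous paragraph gives $P_H C^*(\Lambda) P_H\subseteq I_H$; moreover for $s_\alpha s_\beta^*\in I_H$ we have $P_H s_\alpha s_\beta^* P_H=s_\alpha s_\beta^*$ if $r(\alpha),r(\beta)\in H$ and $0$ otherwise, whence $P_H I_H P_H=\clsp\{s_\alpha s_\beta^*:\alpha,\beta\in H\Lambda\}=P_H C^*(\Lambda) P_H$ (note $P_H$ restricts to a projection in $\Mm(I_H)$ since $I_H$ is an ideal), so $P_H C^*(\Lambda) P_H$ is a corner of $I_H$. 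For fullness: each $s_v=s_v s_v^*$ ($v\in H$) lies in $P_H C^*(\Lambda) P_H$, and every spanning element $s_\lambda s_\mu^*$ of $I_H$ (with $s(\lambda)=s(\mu)=v\in H$) equals $s_\lambda s_v s_\mu^*$, hence lies in the ideal of $C^*(\Lambda)$ generated by $P_H C^*(\Lambda) P_H$; since that ideal is also contained in $I_H$, it equals $I_H$, so the corner is full in $I_H$.

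I expect the main obstacle to be the injectivity step in~(\ref{it:corner iso}): one must arrange the Cuntz-Krieger $q^*\Gamma$-family inside the corner and the restriction of the $\TT^k$-gauge action so that they meet the hypotheses of Proposition~\ref{prp:Pgraph giut} --- in particular verifying that $P_H$ is $\gamma$-fixed so the action descends to the corner, and that the equivariance $\gamma_z(s_\lambda)=z^{d(\lambda)}s_\lambda$ corresponds to the degree functor of the $k$-graph $q^*\Gamma$. The strict convergence of $\sum_{v\in H}s_v$, the identification of $P_H C^*(\Lambda) P_H$ as a closed span of monomials, and the fullness computation in~(\ref{it:corner in ideal}) are then routine.
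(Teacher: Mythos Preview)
Your proposal is correct and follows essentially the same approach as the paper: use the $k$-graph isomorphism $H\Lambda\cong q^*\Gamma$ and the universal property for~(\ref{it:exists homo}), standard strict-convergence arguments plus Proposition~\ref{prp:Pgraph giut} (applied via the restricted gauge action) for~(\ref{it:corner iso}), and the observation that the corner contains the vertex projections $\{s_v:v\in H\}$ generating $I_H$ for fullness in~(\ref{it:corner in ideal}). Your write-up simply fills in more of the routine verifications (heredity, equivariance, identification of the corner as a closed span) that the paper leaves implicit.
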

\begin{proof}
Theorem~\ref{thm:pullback iso}(\ref{it:tail pullback iso}) implies that $\lambda\mapsto
([\lambda],d(\lambda))$ is a $k$-graph isomorphism between $H\Lambda$ and $f^*\Gamma$.
Statement~(\ref{it:exists homo}) then follows from the universal property of
$C^*(q^*\Gamma)$.

The argument of \cite[Lemma~1.1]{BatesPaskEtAl:NYJM00} shows that $\sum_{v \in H} s_v$
converges strictly to a multiplier projection $P_H$ such that $P_H s_\mu s^*_\nu =
\chi_H(r(\mu)) s_\mu s^*_\nu$. We have $P_H C^*(\Lambda) P_H = \clsp\{s_\lambda s^*_\mu
: \lambda,\mu \in H\Lambda\}$, so the image of $\phi$ is $P_H C^*(\Lambda) P_H$.
Proposition~\ref{prp:Pgraph giut} implies that $\phi$ is injective,
giving~(\ref{it:corner iso}).

For~(\ref{it:corner in ideal}), observe that $I_H$ is the ideal generated by $\{s_v:v\in
H\}$ and so is generated as an ideal of $C^*(\Lambda)$ by $P_H$. Hence $P_H$ determines a
full corner in $I_H$.
\end{proof}

\begin{lem}\label{lem:infinite-path}
Let $\Lambda$ be a row-finite $k$-graph with no sources and let $T$ be a maximal tail of
$\Lambda$. Then there is an infinite path $x\in(\Lambda T)^{\infty}$ which is cofinal in
$T$ in the sense that for each $v\in T$, there exists $n\in\NN^k$ such that $v\Lambda
x(n)\neq \emptyset$.
\end{lem}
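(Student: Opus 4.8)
The plan is to build the cofinal infinite path by a diagonal construction, exhausting $T$ one vertex at a time while extending a nested sequence of finite paths. First I would fix an enumeration $v_1, v_2, \dots$ of the (countable) set $T$, and I would also fix, once and for all, a well-behaved sequence of "horizon" degrees: for $n \in \NN$ let $n \cdot \mathbf{1} := (n,n,\dots,n) \in \NN^k$. The idea is to produce a sequence of finite paths $\lambda_0, \lambda_1, \lambda_2, \dots$ in $\Lambda T$ (so all ranges and sources lie in $T$, using condition~(\ref{it:MTc}) of the definition of a maximal tail to stay inside $T$) such that $\lambda_{n}$ extends $\lambda_{n-1}$ --- i.e.\ $\lambda_{n-1} = \lambda_n(0, d(\lambda_{n-1}))$ --- and such that, at stage $n$, the vertex $v_n$ connects into the path built so far. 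Concretely, the inductive step has two sub-steps: given $\lambda_{n-1}$ with source $w_{n-1} \in T$, first apply maximal-tail condition~(\ref{it:MTa}) to $w_{n-1}$ and $v_n$ to get $w \in T$ with $w_{n-1}\Lambda w \neq \emptyset$ and $v_n \Lambda w \neq \emptyset$; choose $\tau \in w_{n-1}\Lambda w$ and set $\lambda_{n-1}\tau$ as an intermediate path, so that $v_n$ now connects to its source $w$. Second, to guarantee the resulting path is genuinely infinite (i.e.\ has an extension to an element of $(\Lambda T)^\infty$), use condition~(\ref{it:MTb}) repeatedly --- once in each coordinate direction --- to extend $\lambda_{n-1}\tau$ by a further path staying in $T$ until its degree dominates $n \cdot \mathbf 1$; call the result $\lambda_n$.

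Having produced the nested sequence $(\lambda_n)$ with $d(\lambda_n) \geq n\cdot\mathbf 1 \to \infty$ coordinatewise and $\lambda_{n-1} = \lambda_n(0, d(\lambda_{n-1}))$, the factorisation property guarantees a well-defined $k$-graph morphism $x : \Omega_{k} \to \Lambda$ with $x(0, d(\lambda_n)) = \lambda_n$ for all $n$: given $(p,q) \in \Omega_k$, choose $n$ large enough that $d(\lambda_n) \geq q$ and define $x(p,q) := \lambda_n(p,q)$; consistency across different choices of $n$ is immediate from uniqueness of factorisations, and functoriality is a routine check. Since every $\lambda_n$ lies in $\Lambda T$ and $T$ is hereditary under the relation in~(\ref{it:MTc}), every vertex $x(n)$ lies in $T$, so $x \in (\Lambda T)^\infty$. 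Cofinality is then built in: for each $v_n \in T$, the stage-$n$ construction gives $\tau \in w_{n-1}\Lambda w$ with $w = s(\lambda_{n-1}\tau)$ and $v_n\Lambda w \neq \emptyset$; since $w = x(m)$ for $m := d(\lambda_{n-1}\tau)$, we have $v_n \Lambda x(m) \neq \emptyset$, which is exactly the cofinality condition.

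The main obstacle, and the only place that needs care rather than routine bookkeeping, is ensuring that the path we build is truly infinite --- that is, that at every stage we can extend past the prescribed horizon while staying inside $\Lambda T$. This is where condition~(\ref{it:MTb}) is essential: it says precisely that from any vertex of $T$ and in any coordinate direction there is an edge of that colour into $T$, so we can iterate to push the degree arbitrarily high within $T$. One must be slightly careful to interleave the "connect $v_n$" steps (\ref{it:MTa}) with the "grow the path" steps (\ref{it:MTb}) so that both the exhaustion of $T$ and the unboundedness of the degree are achieved simultaneously; organising the induction so that at stage $n$ we both handle $v_n$ and arrange $d(\lambda_n) \geq n \cdot \mathbf 1$ accomplishes this cleanly. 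Everything else --- the verification that $x$ is a well-defined $k$-graph morphism and the bookkeeping that its vertices stay in $T$ --- follows formally from the factorisation property and heredity of $T$.
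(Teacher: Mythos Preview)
Your proposal is correct and follows essentially the same approach as the paper: enumerate $T$, inductively build a nested sequence of finite paths using condition~(\ref{it:MTa}) to ensure each $v_n$ connects to the source of the current path, and pass to the limiting infinite path via the factorisation property. Your explicit use of condition~(\ref{it:MTb}) to push the degree past $n\cdot\mathbf{1}$ at each stage is in fact more careful than the paper's write-up, which asserts $d(\beta_j)\to(\infty,\dots,\infty)$ without spelling out why the paths $\alpha_j$ supplied by condition~(\ref{it:MTa}) alone can be arranged to have nontrivial degree in every coordinate.
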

\begin{proof}
Let $\{v_j\}^{\infty}_{j=0}$ be a listing of $T$. Condition~(\ref{it:MTa}) for a maximal
tail implies that there exists $\alpha_1\in v_0 T$ such that $v_1\Lambda
s(\alpha_1)\neq\emptyset$. Applying the same condition again gives $\alpha_2\in
s(\alpha_1) T$ such that $v_2\Lambda s(\alpha_2)\neq\emptyset$. Inductively, we construct
$\{\alpha_j\}^{\infty}_{j=1}$ such that $r(\alpha_j)=s(\alpha_{j-1})$ and $v_j\Lambda
s(\alpha_j)\neq\emptyset$ for all $j$. Define $\beta_0=v_0$ and
$\beta_i=\beta_{i-1}\alpha_i$ for $i\ge 1$. Then $v_i\Lambda s(\beta_j)\neq \emptyset$
whenever $i \le j$, and $d(\beta_j)\rightarrow (\infty,\dots,\infty)$ as
$j\rightarrow\infty$. The argument of \cite[Remarks~2.2]{KP1} shows that there is a
unique $x\in (\Lambda T)^{\infty}$ such that $x(0,d(\beta_i))=\beta_i$ for all $i$. This
$x$ is cofinal by construction.
\end{proof}

In the following, given an infinite path $x$ in a $k$-graph $\Lambda$, we write $[x]$ for
the shift-tail equivalence class $\{\lambda\sigma^m(x) : m \in \NN^k, \lambda \in \Lambda
x(m)\}$.

\begin{thm}\label{thm:primitive ideal}
Let $\Lambda$ be a row-finite $k$-graph with no sources.
\begin{enumerate}
\item\label{it:T,gamma->I} Let $T$ be a maximal tail of $\Lambda$ and $\gamma$ a
    character of $\Per(\Lambda T)$. Suppose that $z \in \TT^k$ satisfies $z^{m-n} =
    \gamma(m-n)$ for all $m-n \in \Per(\Lambda T)$. Suppose that $x \in (\Lambda
    T)^\infty$ is cofinal in $T$. Then there is an irreducible representation
    $\pi_{[x], z} : C^*(\Lambda) \to \Bb(\ell^2([x]))$ determined by
    \begin{equation}\label{eq:irrep formula}
    \pi_{[x], z}(s_{\lambda})\xi_y
        = \begin{cases}
            z^{d(\lambda)}\xi_{\lambda y} & \text{ if $y(0)=s(\lambda)$}\\
            0 & \text{ otherwise.}
        \end{cases}
    \end{equation}
\item\label{it:I->T,gamma} Let $I$ be a primitive ideal in $C^*(\Lambda)$. Then $T =
    \{v \in \Lambda^0 : s_v\notin I\}$ is a maximal tail. Let $H = H_{\Per}(\Lambda
    T)$. There is a unique character $\gamma$ of $\Per(\Lambda T)$ such that
    $s_\mu-\gamma(d(\mu)-d(\nu))s_\nu\in I$ whenever $\mu,\nu \in H\Lambda T$ and
    $\mu \sim_{\Lambda T} \nu$. Suppose that $x \in (\Lambda T)^\infty$ is cofinal in
    $\Lambda T$ and that $z \in \TT^k$ satisfies $z^{m-n} = \gamma(m - n)$ for all
    $m-n \in \Per(\Lambda T)$. Then $I = \ker\pi_{[x], z}$.
\end{enumerate}
\end{thm}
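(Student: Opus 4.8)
The strategy is to reduce everything to the already-established machinery: the corner description of Lemma~\ref{lem:corner}, the bijection of Theorem~\ref{thm:bijection}, and the identification of the periodicity group with $\Sigma_{\Lambda T}$ from Theorem~\ref{thm:pullback iso} and Lemma~\ref{lem:Per vs Sigma}. For part~(\ref{it:T,gamma->I}), I would first construct the representation $\pi_{[x],z}$ directly on $\ell^2([x])$ by checking that the formula \eqref{eq:irrep formula} defines a Cuntz-Krieger $\Lambda$-family of operators; the relations (CK1)--(CK3) are immediate, and (CK4) uses that $[x]$ is closed under the obvious partial shift-and-extend operations together with row-finiteness. The scalar $z^{d(\lambda)}$ is consistent because any two expressions $\lambda\sigma^m(x)=\mu\sigma^n(x)$ for the same element of $[x]$ force $\sigma^m(x)=\lambda' \sigma^{m'}(x)$-type relations, and cofinality of $x$ in $T$ guarantees that every vertex of $T$ appears as $y(0)$ for some $y\in[x]$, so $\pi_{[x],z}(s_v)\ne 0$ exactly when $v\in T$. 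Irreducibility follows because $\ell^2([x])$ is spanned by a single orbit of the matrix units $\pi_{[x],z}(s_\lambda s_\mu^*)$ acting on any basis vector $\xi_y$: given $y,y'\in[x]$ one finds $\lambda,\mu$ with $\lambda\sigma^m(x)=y$, $\mu\sigma^m(x)=y'$ for a common tail, and then $s_\lambda s_\mu^*$ sends $\xi_{y'}$ to a scalar multiple of $\xi_y$.

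For part~(\ref{it:I->T,gamma}), I would argue that $T=\{v:s_v\notin I\}$ is a maximal tail by the standard computation: condition~(c) follows since $s_v s_\lambda s_\lambda^* s_w \ne 0$ when $v\Lambda w\ne\emptyset$, so $s_w\notin I\Rightarrow s_v\notin I$; condition~(b) follows from (CK4), since $s_v=\sum_{\lambda\in v\Lambda^{e_i}}s_\lambda s_\lambda^*$ modulo $I$ forces some $s_{s(\lambda)}\notin I$; and condition~(a) is the subtler one, requiring that $I$ prime (which it is, being primitive) rules out $s_{v_1}\otimes s_{v_2}$-type obstructions—here one uses that the ideal generated by $s_{v_1}$ and the ideal generated by $s_{v_2}$ must have nonzero product, which translates into the existence of a common descendant $w\in T$. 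Then $C^*(\Lambda)/I$ has the property that the image of each $s_v$, $v\in T$, is nonzero, so $I$ restricts to a primitive ideal $I\cap I_{\Lambda T}$-ish object in the ideal $I_{H}$ of $C^*(\Lambda T)$ (after first passing to $C^*(\Lambda T)$, which is a quotient of $C^*(\Lambda)$ by the ideal generated by $\{s_v:v\notin T\}$, using (c) of maximal tail to see this ideal is contained in $I$).

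Next I would invoke Lemma~\ref{lem:corner}: $P_H C^*(\Lambda T) P_H \cong C^*(q^*\Gamma)$ is a full corner of the ideal $I_H\trianglelefteq C^*(\Lambda T)$, so primitive ideals of $C^*(\Lambda T)$ not containing $I_H$ correspond to primitive ideals of $C^*(q^*\Gamma)$. Theorem~\ref{thm:bijection} (with $G=\ZZ^k/\Per(\Lambda T)$, $H=\ker q=\Per(\Lambda T)$) then provides the unique character $\gamma$ of $\Per(\Lambda T)$ attached to the corresponding irreducible representation, and Remark~\ref{rmk:unique character} translated through $\phi$ of Lemma~\ref{lem:corner}(\ref{it:exists homo}) gives exactly the property $s_\mu-\gamma(d(\mu)-d(\nu))s_\nu\in I$ for $\mu\sim_{\Lambda T}\nu$ in $H\Lambda T$; uniqueness of $\gamma$ follows from Lemma~\ref{ideal comparison} since distinct characters give distinct ideals $I_{z}$. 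Finally, to see $I=\ker\pi_{[x],z}$, I would observe that $\pi_{[x],z}$ factors through $C^*(\Lambda T)$, kills $I_{z_\gamma}$-type relations, and its restriction to the corner is (up to the isomorphism $\phi$) the composite of an irreducible representation of $C^*(\Gamma)$ with $\psi_{z_\gamma}\circ q_{z_\gamma}$; by Theorem~\ref{thm:bijection} any two primitive ideals of the corner—equivalently of $C^*(\Lambda T)$ not containing $I_H$, equivalently of $C^*(\Lambda)$ containing $\{s_v:v\notin T\}$ but not $I_H$—with the same character $\gamma$ coincide, and both $I$ and $\ker\pi_{[x],z}$ satisfy this, so they are equal. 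Independence of the choice of cofinal $x$ and of $z$ (subject to $z^{m-n}=\gamma(m-n)$) then falls out of the same uniqueness.

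\textbf{Main obstacle.} The delicate point is the bookkeeping of ideals: showing that the ideal $J$ of $C^*(\Lambda)$ generated by $\{s_v:v\notin T\}$ is contained in $I$, so that $I$ descends to a primitive ideal of $C^*(\Lambda T)=C^*(\Lambda)/J$ not containing $I_{H_{\Per}(\Lambda T)}$, and then pushing this through the full-corner equivalence and the isomorphism $\phi$ so that the character supplied by Theorem~\ref{thm:bijection} really is a character of $\Per(\Lambda T)$ with the stated relation in $C^*(\Lambda)$ itself (not merely in the corner). Verifying condition~(a) of maximal tail from primeness of $I$, and checking that $\pi_{[x],z}$ indeed annihilates precisely the right relations so that its kernel is the primitive ideal with character $\gamma$, are the places where care is needed; everything else is routine Cuntz-Krieger algebra.
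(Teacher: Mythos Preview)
Your overall strategy matches the paper's: construct $\pi_{[x],z}$ directly, verify it is a Cuntz--Krieger family, prove irreducibility by exhibiting the matrix units $\theta_{w,y}$ in the (strong) closure of the image, and for part~(\ref{it:I->T,gamma}) pass through the corner $P_H C^*(\Lambda T)P_H\cong C^*(q^*\Gamma)$ and invoke Theorem~\ref{thm:bijection}. Your verification that $T$ is a maximal tail via primeness of $I$ is a legitimate alternative to the paper's cyclic-vector argument, and your identification of the character $\gamma$ via Remark~\ref{rmk:unique character} is exactly what the paper does.

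There is, however, a genuine gap at the final step. You assert that ``by Theorem~\ref{thm:bijection} any two primitive ideals of the corner \dots\ with the same character $\gamma$ coincide.'' Theorem~\ref{thm:bijection} gives a bijection $\widehat{H}\times\operatorname{Irr}(C^*(\Gamma))\to\operatorname{Irr}(C^*(q^*\Gamma))$, so on the level of primitive ideals it yields a bijection $\widehat{H}\times\operatorname{Prim}(C^*(\Gamma))\to\operatorname{Prim}(C^*(q^*\Gamma))$. Having the same $\gamma$ is therefore \emph{not} sufficient: you must also show that the two induced irreducible representations of $C^*(\Gamma)$ (one coming from $I$, one from $\pi_{[x],z}$) have the same kernel. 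The paper closes this gap by observing that both of these representations of $C^*(\Gamma)$ are nonzero on every vertex projection (by construction of $T$ for $\pi$, and by cofinality of $x$ for $\pi_{[x],z}$), and then applying the Cuntz--Krieger uniqueness theorem for $P$-graphs, Corollary~\ref{cor:Pgraph CKUT}, to conclude that both are \emph{injective}. This uses that $\Gamma=(H_{\Per}\Lambda T)/\!\sim$ is aperiodic---which is implicit in the construction, since one has quotiented out exactly the periodicity---and is the one genuinely nontrivial ingredient you are missing. Once both representations are faithful on $C^*(\Gamma)$, their kernels in $C^*(q^*\Gamma)$ are both equal to $I_z$, and the Morita-equivalence bookkeeping you describe then finishes the argument as in the paper.
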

\begin{proof}
(\ref{it:T,gamma->I}) It is routine to check that the operators $\{\pi_{[x],
z}(s_\lambda) : \lambda \in \Lambda\}$ defined by~\eqref{eq:irrep formula} constitute a
Cuntz-Krieger $\Lambda$-family. The universal property of $C^*(\Lambda)$ then yields a
homomorphism $\pi := \pi_{[x], z} : C^*(\Lambda) \to \Bb(\ell^2([x]))$
extending~\eqref{eq:irrep formula}.

Fix $y\in [x]$. For $w\in [x]$ and $n \in \NN^k$,
\[
\pi(s_{y(0,n)}s^*_{y(0,n)})\delta_w
    = \begin{cases}
        \delta_w &\text{if $w(0,n)=y(0,n)$}\\
        0 &\text{otherwise.}
    \end{cases}
\]
Hence $\lim_{n\rightarrow \infty} \pi(s_{y(0,n)} s^*_{y(0,n)}) \delta_w =
\theta_{\delta_y, \delta_y} \delta_w$. Thus the net $(\pi(s_{y(0,n)}
s^*_{y(0,n)}))_{n\in\NN^k}$ converges strongly to $\theta_{y,y}$. We claim that the
strong closure of the image of $\pi$ contains $\Kk(\ell^2([x]))$. Fix $w,y\in[x]$ and fix
$p_i,q_i\in \NN^k$ such that $\sigma^{p_1}(w)=\sigma^{q_1}(x)$ and
$\sigma^{p_2}(y)=\sigma^{q_2}(x)$. Let $q=q_1\vee q_2$, $\alpha=w(0,p_1+(q-q_1))$ and
$\beta=y(0,p_2+(q-q_2))$. Then $w=\alpha\sigma^q(x)$ and $y=\beta\sigma^q(x)$. We have
$\pi(s_{\alpha x(q,q+n)}s^*_{\beta x(q,q+n)}) = \pi(s_{\alpha}s^*_{\beta})
\pi(s_{y(0,d(\beta)+n)} s^*_{y(0,d(\beta)+n)})$ which converges strongly to
$\pi(s_{\alpha} s^*_{\beta})\theta_{y,y} = \theta_{w,y}$. Thus, the strong closure of the
image of $\pi$ contains $\Kk(\ell^2([x]))$ as claimed. Hence $\pi$ is irreducible.

(\ref{it:I->T,gamma}) Since $I$ is a primitive ideal in $C^*(\Lambda)$, there is an
irreducible representation $\pi : C^*(\Lambda)\to \mathcal{B}(\Hh)$ such that
$\ker\pi=I$. Let $K=\{v\in\Lambda^0 : \pi(s_v)=0\}$. Then
\cite[Theorem~5.2(a)]{RaeburnSimsEtAl:PEMS03} implies that $K$ is hereditary and
saturated. So the set
\[
    T=\{v\in\Lambda^0: s_v \notin I\}=\{v\in\Lambda^0:\pi(s_v)\neq 0\}= \Lambda^0 \setminus K.
\]
satisfies conditions (\ref{it:MTb})~and~(\ref{it:MTc}) of a maximal tail. To
establish~(\ref{it:MTa}), fix $v,w\in T$ and $h \in p_v \Hh \setminus \{0\}$. Since $\pi$
is irreducible, $h$ is cyclic for $\pi$. So there exists $a \in C^*(\Lambda)$ such that
$\pi(a)h \in \pi(p_w) \Hh\setminus \{0\}$; that is, $0 \not= \pi(p_w) \pi(a) h = \pi(p_w
a p_v)h$. In particular, $p_w a p_v \not= 0$. Since $C^*(\Lambda) = \clsp\{s_\alpha
s^*_\beta : s(\alpha) = s(\beta)\}$ we may approximate $a$ by a linear combination of
such $s_\alpha s^*_\beta$. Thus there exist $\alpha, \beta$ with $s(\alpha) = s(\beta)$
such that $p_w s_\alpha s^*_\beta p_v \not= 0$. Hence $u := s(\alpha) = s(\beta)$
satisfies $v\Lambda u, w \Lambda u \not= \emptyset$, and so $T$ satisfies~(\ref{it:MTa}).

Let $H = H_{\Per}(\Lambda T)$, and let $I_H$ be the ideal of $C^*(\Lambda)$ generated by
$\{s_v : v \in H\}$. Since $\pi|_{I_H}$ is nonzero, Theorem~1.3.4 of
\cite{Arveson:invitation} shows that $\pi_H := \pi|_{I_H}$ is an irreducible
representation. Lemma~\ref{lem:corner}(\ref{it:corner iso}) implies that $I_H P_H$ is a
Morita equivalence between $P_H C^*(\Lambda) P_H$, so $\ker(\pi_H)$ is induced from the
primitive ideal $\ker(\pi_H)|_{P_H C^*(\Lambda) P_H}$. Let $\Gamma = (H \Lambda T)/\sim$.
Then Lemma~\ref{lem:corner}(\ref{it:corner iso}) provides an isomorphism $\omega :
C^*(q^*\Gamma) \to P_H C^*(\Lambda) P_H$, and then $\pi_H \circ \omega$ is an irreducible
representation $C^*(q^*\Gamma)$. Now Theorem~\ref{thm:bijection} implies (see
Remark~\ref{rmk:unique character}) that there is a unique character $\gamma$ of
$\Per(\Lambda T)$ such that $s_\mu-\gamma(d(\mu)-d(\nu))s_\nu\in I$ whenever $\mu,\nu \in
H\Lambda T$ and $\mu \sim_{\Lambda T} \nu$. It is routine to check that if $x$ is cofinal
in $\Lambda T$ and $z^{m-n} = \gamma(m-n)$ for $m-n \in \Per(\Lambda T)$, then
$s_\mu-\gamma(d(\mu)-d(\nu))s_\nu \in \ker \pi_{[x], z}$ whenever $\mu,\nu \in H\Lambda
T$ and $\mu \sim_{\Lambda T} \nu$.

We claim that $\ker(\pi_H|_{P_H I_H P_H}) = \ker(\pi_{[x],z}|_{P_H I_H P_H})$. Each of
$\pi \circ \omega$ and $\pi_{[x],z} \circ \omega$ is a representation of $C^*(q^*\Gamma)$
which vanishes on the generators of the ideal $I_z$ of Lemma~\ref{quotient iso} and is
nonzero on every vertex projection $s_{(v,0)} \in C^*(q^*\Gamma)$. So $\pi \circ \omega$
and $\pi_{[x],z} \circ \omega$ descend to representations $\rho$ and $\rho_{[x],z}$ of
$C^*(q^*\Gamma)/I_z$. Lemma~\ref{quotient iso} gives an isomorphism $\theta : C^*(\Gamma)
\to C^*(q^*\Gamma)/I_z$, and the representations $\rho\circ\theta$ and $\rho_{[x],z}
\circ \theta$ of $C^*(\Gamma)$ are nonzero on vertex projections. Hence
Corollary~\ref{cor:Pgraph CKUT} implies that $\rho\circ\theta$ and $\rho_{[x],z} \circ
\theta$ are both injective. Thus $\rho$ and $\rho_{[x],z}$ are both injective, which
implies that $\ker(\pi \circ \omega) = I_z = \ker(\pi_{[x],z} \circ \omega)$. Since $P_H$
is a strict limit of projections in $I_H$, we have $P_H I_H P_H = P_H C^*(\Lambda) P_H$,
and so $\omega$ is an isomorphism from $C^*(q^*\Gamma)$ to $P_H I_H P_H$. This proves the
claim.

Since $I_H P_H$ is a Morita equivalence, the claim implies that $\ker(\pi_H) =
\ker(\pi_{[x],z}|_{I_H})$. Now \cite[Proposition~2.72]{RW} applied to the adjointable
left action of $C^*(\Lambda)$ by left multiplication on the standard Hilbert module
$(I_H)_{I_H}$ implies that $\ker\pi = \ker \pi_{[x], z}$.
\end{proof}

Let $T$ be a maximal tail of $\Lambda$ and $\gamma$ a character of $\Per(\Lambda T)$.
Theorem~\ref{thm:primitive ideal} implies that if $x,y$ are both cofinal in $T$ and $w,z
\in \TT^k$ satisfy $w^{m-n} = \gamma(m - n) = z^{m-n}$ whenever $m-n \in \Per(\Lambda
T)$, then $\ker\pi_{[x],z} = \ker\pi_{[y], w}$. We define $I_{T, \gamma} :=
\ker\pi_{[x],z}$. We write $\MT(\Lambda)$ for the collection of all maximal tails in
$\Lambda^0$.

\begin{cor}\label{cor:prim ideal bijection}
Let $\Lambda$ be a row finite $k$-graph with no sources. The assignment $(T, \gamma)
\mapsto I_{T, \gamma}$ is a bijection from $\bigcup_{T \in \MT(\Lambda)} (\{T\} \times
\widehat{\Per(\Lambda T)})$ to the set of primitive ideals of $C^*(\Lambda)$.
\end{cor}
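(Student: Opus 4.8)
The plan is to read the corollary off from Theorem~\ref{thm:primitive ideal} together with the well-definedness observation immediately preceding it; only one small additional computation is needed. Write $\Pp := \bigcup_{T \in \MT(\Lambda)} (\{T\} \times \widehat{\Per(\Lambda T)})$ for the domain.

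First I would note that the assignment is well-defined and takes values among the primitive ideals. Given $(T,\gamma) \in \Pp$, Lemma~\ref{lem:infinite-path} furnishes a cofinal $x \in (\Lambda T)^\infty$, and for any $z \in \TT^k$ with $z^{m-n} = \gamma(m-n)$ whenever $m - n \in \Per(\Lambda T)$, Theorem~\ref{thm:primitive ideal}(\ref{it:T,gamma->I}) produces an irreducible representation $\pi_{[x],z}$ of $C^*(\Lambda)$, so $I_{T,\gamma} = \ker\pi_{[x],z}$ is primitive; independence of the choices of $x$ and $z$ is exactly the content of the remark preceding the corollary. Surjectivity is then immediate from Theorem~\ref{thm:primitive ideal}(\ref{it:I->T,gamma}): an arbitrary primitive ideal $I$ equals $\ker\pi$ for some irreducible $\pi$, the set $T := \{v \in \Lambda^0 : s_v \notin I\}$ is a maximal tail, the theorem supplies a character $\gamma$ of $\Per(\Lambda T)$, and $I = \ker\pi_{[x],z} = I_{T,\gamma}$ for any compatible $x$ and $z$.

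The substance of the proof is injectivity, which I would obtain by showing that both coordinates $T$ and $\gamma$ can be recovered from the ideal $I_{T,\gamma}$. For the tail I would prove that $\{v \in \Lambda^0 : s_v \notin I_{T,\gamma}\} = T$: by the formula \eqref{eq:irrep formula}, $\pi_{[x],z}(s_v) \neq 0$ precisely when some $y \in [x]$ has $y(0) = v$; writing such a $y$ as $\lambda\sigma^m(x)$ with $\lambda \in \Lambda x(m)$ shows $v\Lambda x(m) \neq \emptyset$ with $x(m) \in T$, so condition~(\ref{it:MTc}) of a maximal tail gives $v \in T$, while conversely cofinality of $x$ in $T$ provides, for each $v \in T$, an $n$ and a $\lambda \in v\Lambda x(n)$, so that $\lambda\sigma^n(x) \in [x]$ has range $v$. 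Hence $T$ is determined by $I_{T,\gamma}$, so $I_{T_1,\gamma_1} = I_{T_2,\gamma_2}$ forces $T_1 = T_2 =: T$. For the character, Theorem~\ref{thm:primitive ideal}(\ref{it:I->T,gamma}), applied to the common ideal $I := I_{T,\gamma_1} = I_{T,\gamma_2}$, attaches to $I$ a \emph{unique} character $\gamma$ of $\Per(\Lambda T)$ with the property that $s_\mu - \gamma(d(\mu) - d(\nu))s_\nu \in I$ for all $\mu \sim_{\Lambda T} \nu$ in $H_{\Per}(\Lambda T)\Lambda T$; the routine computation recorded in that proof shows that $\gamma_1$ and $\gamma_2$ each also have this property, so $\gamma_1 = \gamma = \gamma_2$.

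I do not expect any real obstacle: essentially all the work is already inside Theorem~\ref{thm:primitive ideal}, and the only genuinely new ingredient is the identification $\{v : s_v \notin I_{T,\gamma}\} = T$, which is the short argument above from \eqref{eq:irrep formula}, cofinality of $x$, and maximal-tail condition~(\ref{it:MTc}). If anything needs care it is keeping straight that $\Per$, $H_{\Per}$, and shift-tail classes are all being formed inside $\Lambda T$ (and that $\Lambda T$ is again a row-finite $k$-graph with no sources), but this is bookkeeping rather than mathematics.
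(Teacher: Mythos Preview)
Your proposal is correct and is precisely the natural elaboration of the paper's one-line proof, which reads in its entirety ``This follows immediately from Theorem~\ref{thm:primitive ideal} and Lemma~\ref{lem:infinite-path}.'' The only detail you supply that is not written out anywhere in the paper is the identification $\{v : s_v \notin I_{T,\gamma}\} = T$ via cofinality and condition~(\ref{it:MTc}); this is exactly the missing step the paper's ``immediately'' is sweeping under the rug (indeed, the well-definedness remark just before the corollary already tacitly uses it), so your write-up is if anything more complete than the original.
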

\begin{proof}
This follows immediately from Theorem~\ref{thm:primitive ideal} and Lemma~\ref{lem:infinite-path}.
\end{proof}

\begin{rmk}
It is worthwhile to point out what the above catalogue of primitive ideals says for
aperiodic maximal tails $T$. In this instance, we have $\Per(\Lambda T) = \{0\}$ so the
only character is the identity character $\gamma(0) = 1$. The Cuntz-Krieger uniqueness
theorem implies that $\pi_{[x],1}$ restricts to a faithful representation of
$C^*(H\Lambda T)$. Thus $I_{T, 1}$ is precisely the gauge-invariant primitive ideal
associated to $T$ described in \cite{KaP1}. In particular, if $\Lambda$ is strongly
aperiodic in the sense of~\cite{KaP1}, so that every maximal tail $T$ is aperiodic, then
our result recovers the listing given in \cite{KaP1}.
\end{rmk}

We close by characterising primitivity of $C^*(\Lambda)$; recall that a $C^*$-algebra $A$
is primitive if it has a faithful irreducible representation.

\begin{cor}
Let $\Lambda$ be a row-finite $k$-graph with no sources. Then $C^*(\Lambda)$ is primitive
if and only if $\Lambda^0$ is a maximal tail and $\Lambda$ is aperiodic.
\end{cor}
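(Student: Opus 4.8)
The plan is to deduce this from Corollary~\ref{cor:prim ideal bijection} together with the observation that $C^*(\Lambda)$ is primitive precisely when the zero ideal is primitive. Concretely, $C^*(\Lambda)$ is primitive if and only if $\{0\}$ is one of the ideals $I_{T,\gamma}$ catalogued in Corollary~\ref{cor:prim ideal bijection}. So the first step is to analyse when $I_{T,\gamma} = \{0\}$, i.e.\ when the representation $\pi_{[x],z}$ of Theorem~\ref{thm:primitive ideal}(\ref{it:T,gamma->I}) is faithful.

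For the ``if'' direction, suppose $\Lambda^0$ is a maximal tail and $\Lambda$ is aperiodic. Then $T := \Lambda^0$ is a maximal tail and $\Per(\Lambda T) = \Per(\Lambda)$. I would argue that aperiodicity of $\Lambda$ forces $\Per(\Lambda) = \{0\}$: if $\mu \sim \nu$ with $d(\mu) \neq d(\nu)$, then for any $x \in s(\mu)\Lambda^\infty$ one gets (as in the proof of Lemma~\ref{lem:Per vs Sigma}) a pair $(p,q)$ with $p \neq q$ and $\sigma^p(y) = \sigma^q(y)$ for all $y$ in some $v\Lambda^\infty$, which by cofinality of $\Lambda^0$ contradicts the existence of an aperiodic infinite path at every vertex. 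Hence the only character is the trivial one, $H_{\Per}(\Lambda) = \Lambda^0$ by Theorem~\ref{thm:pullback iso}(\ref{it:Hper}) (or one checks directly that every vertex lies in $H_{\Per}$ once $\Per = \{0\}$), and $z = 1$ works. Then as in the Remark following Corollary~\ref{cor:prim ideal bijection}, the Cuntz--Krieger uniqueness theorem (the usual $k$-graph version, or Corollary~\ref{cor:Pgraph CKUT} via the identification of $C^*(\Lambda)$ with a corner construction) shows $\pi_{[x],1}$ is faithful, so $\{0\} = I_{\Lambda^0,1}$ is primitive.

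For the ``only if'' direction, suppose $C^*(\Lambda)$ is primitive, so $\{0\} = I_{T,\gamma}$ for some maximal tail $T$ and character $\gamma$. Since $s_v \notin \{0\}$ for every $v \in \Lambda^0$ by Lemma~\ref{lem:nonzero}, and $T = \{v : s_v \notin I_{T,\gamma}\}$ by Theorem~\ref{thm:primitive ideal}(\ref{it:I->T,gamma}), we get $T = \Lambda^0$, so $\Lambda^0$ is a maximal tail. It remains to show $\Lambda$ is aperiodic. Here I would use that if $\Lambda$ is \emph{not} aperiodic then $\Per(\Lambda) \neq \{0\}$: some $\mu \sim \nu$ with $d(\mu) \neq d(\nu)$, giving a nonzero element of $\Per(\Lambda)$. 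Pick a character $\gamma$ of $\Per(\Lambda)$ and a $z$ with $z^{m-n} = \gamma(m-n)$; the defining relations of $I_z$ then show $s_{(\lambda,n_1)} - z^{n_1-n_2}s_{(\lambda,n_2)}$ maps to zero in the quotient, and pulling this back through the corner identification $\phi$ of Lemma~\ref{lem:corner} produces a nonzero element $s_\mu - \gamma(d(\mu)-d(\nu))s_\nu$ of $\ker\pi_{[x],z} = I_{\Lambda^0,\gamma}$ (nonzero because $s_\mu$ and $s_\nu$ are distinct matrix units in $C^*(\Lambda)^\gamma$, as in Lemma~\ref{lem:direct limit}). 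So no $I_{\Lambda^0,\gamma}$ can be $\{0\}$, contradicting primitivity. Since the only maximal tail in play is $\Lambda^0$ itself, this shows $\Lambda$ must be aperiodic.

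The main obstacle is the bookkeeping linking aperiodicity of the $k$-graph $\Lambda$ to triviality of $\Per(\Lambda)$ and to faithfulness of $\pi_{[x],1}$: one must be careful that the relevant periodicity group is $\Per(\Lambda T)$ for $T = \Lambda^0$, that $H_{\Per}$ is all of $\Lambda^0$ in the aperiodic case (so the corner is the whole algebra and nothing is lost), and that the element $s_\mu - \gamma(d(\mu)-d(\nu))s_\nu$ exhibiting nontriviality of $I_{\Lambda^0,\gamma}$ really is nonzero — the latter following from the AF-core matrix-unit structure established in Lemma~\ref{lem:direct limit}. Everything else is a direct application of the catalogue in Corollary~\ref{cor:prim ideal bijection}.
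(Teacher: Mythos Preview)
Your approach is essentially the same as the paper's: both directions go through the catalogue of primitive ideals, identify $T=\Lambda^0$, and reduce aperiodicity to $\Per(\Lambda)=\{0\}$. The ``if'' direction is identical to the paper's (cofinal $x$, then the Cuntz--Krieger uniqueness theorem makes $\pi_{[x],1}$ faithful). For ``only if'' the paper argues directly rather than by contradiction: from $\{0\}=I_{\Lambda^0,\chi}$ one gets $s_\mu=\chi(d(\mu)-d(\nu))s_\nu$ whenever $\mu\sim\nu$, and then applies the gauge action $\gamma_z$ to both sides to force $z^{d(\mu)-d(\nu)}$ constant in $z$, hence $d(\mu)=d(\nu)$.

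One correction: your justification that $s_\mu-\gamma(d(\mu)-d(\nu))s_\nu\neq 0$ because ``$s_\mu$ and $s_\nu$ are distinct matrix units in $C^*(\Lambda)^\gamma$'' is not right---$s_\mu$ and $s_\nu$ do not lie in the fixed-point algebra (the matrix units there are the $s_\alpha s_\beta^*$ with $d(\alpha)=d(\beta)$), so Lemma~\ref{lem:direct limit} does not apply. The clean argument is the gauge-action one the paper uses: if $s_\mu=c\,s_\nu$ then applying $\gamma_z$ gives $z^{d(\mu)}s_\mu=c\,z^{d(\nu)}s_\nu=z^{d(\nu)}s_\mu$, and since $s_\mu\neq 0$ this forces $d(\mu)=d(\nu)$. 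Equivalently, $s_\mu$ and $s_\nu$ lie in distinct spectral subspaces of the gauge action, hence are linearly independent. You should also be slightly careful that the $\mu\sim\nu$ you exhibit has range in $H_{\Per}$ (so that Theorem~\ref{thm:primitive ideal}(\ref{it:I->T,gamma}) applies); this follows from Theorem~\ref{thm:pullback iso}(\ref{it:Hper}), which guarantees such pairs exist in $H_{\Per}\Lambda$ whenever $\Per(\Lambda)\neq\{0\}$.
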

\begin{proof}
First suppose that $\Lambda^0$ is a maximal tail and $\Lambda$ is aperiodic.
Lemma~\ref{lem:infinite-path} implies that there is an infinite path $x$ of $\Lambda$
which is cofinal in $\Lambda^0$. Consider the irreducible representation $\pi_{[x], 1}$
of~\eqref{eq:irrep formula}. Since $\Lambda$ is a maximal tail, we have $\pi_{[x],
1}(s_v) \not= 0$ for all $v \in \Lambda^0$. Since $\Lambda$ is aperiodic, the
Cuntz-Krieger uniqueness theorem implies that $\pi_{[x],1}$ is faithful. Hence
$C^*(\Lambda)$ is primitive.

Now suppose that $C^*(\Lambda)$ is primitive. Then $\{0\}$ is a primitive ideal of
$C^*(\Lambda)$. Theorem~\ref{thm:primitive ideal}(\ref{it:I->T,gamma}) implies that $T :=
\{v \in \Lambda^0 : s_v \not\in \{0\}\}$ is a maximal tail, and that there is a character
$\chi$ of $\Per(T)$ such that $I_{T, \chi} = \{0\}$. Since the generators of
$C^*(\Lambda)$ are all nonzero, $T = \Lambda^0$, so $\Lambda^0$ is a maximal tail. To see
that $\Lambda$ is aperiodic, we must show that $\Per(\Lambda) = \{0\}$. By
Theorem~\ref{thm:pullback iso}, it suffices to show that $\mu \sim \nu$ implies $d(\mu) =
d(\nu)$. Suppose that $\mu \sim \nu$. Then $s_\mu - \chi(d(\mu) - d(\nu)) s_\nu \in I_{T,
\chi}$, and hence $s_\mu = \chi(d(\mu) - d(\nu)) s_\nu$. For $z \in \TT^k$, we then have
\[
z^{d(\mu)} s_\mu
    = \gamma_z(s_\mu)
    = \gamma_z(\chi(d(\mu) - d(\nu)) s_\nu)
    = z^{d(\nu)} \chi(d(\mu) - d(\nu)) s_\nu.
\]
So $z^{d(\mu) - d(\nu)} s_\mu = \chi(d(\mu) - d(\nu)) s_\nu$ for all $z \in \TT^k$. Since
the right-hand side is nonzero and independent of $z$, we deduce that $z^{d(\mu) -
d(\nu)}$ is constant with respect to $z$, forcing $d(\mu) = d(\nu)$.
\end{proof}

\end{document}